\author[1]{David A. Henriquez  Bernal} 
\author[1]{Peter  Nejjar} 
\affil[1]{Institute for Mathematics, Potsdam University
 }
\newcommand{\ww}{\widetilde}
\newcommand{\Pb}{\mathbb{P}}
\newcommand{\LL}{\mathcal{L}}
\newcommand{\RR}{\mathcal{R}}
\newcommand{\dx}{\mathrm{d}}
\newcommand{\R}{\mathbb{R}}
\newcommand{\N}{\mathbb{N}}
\newcommand{\C}{\mathcal{C}}
\newcommand{\Z}{\mathbb{Z}}
\newcommand{\g}{g(\xi^{N},c)}
\newcommand{\GUE}{\mathrm{GUE}}
\newcommand{\GOE}{\mathrm{GOE}}
\newtheorem{tthm}{Theorem}
\newtheorem{prop}{Proposition}[section]
\def\note#1{\textup{\textsf{\color{blue}(#1)}}}
\newtheorem{cor}{Corollary}
\newtheorem{rem}[prop]{Remark}
\theoremstyle{definition}
\newcommand{\blocktheorem}[1]{%
  \csletcs{old#1}{#1}
  \csletcs{endold#1}{end#1}
  \RenewDocumentEnvironment{#1}{o}
    {\par\addvspace{1.5ex}
     \noindent\begin{minipage}{\textwidth}
     \IfNoValueTF{##1}
       {\csuse{old#1}}
       {\csuse{old#1}[##1]}}
    {\csuse{endold#1}
     \end{minipage}
     \par\addvspace{1.5ex}}
}
\begin{document}

\title{Limit profiles of ASEP }

\date{}

\maketitle 
\begin{abstract}

We study  the asymmetric simple exclusion process (ASEP) on a  segment $\{1,\ldots,b_N\}$  and are interested in its  total variation distance to equilibrium when started from an initial configuration $\xi^{N}$. We provide a general result which gives 
the cutoff window and profile whenever a KPZ-type limit theorem is available for  an extension of $\xi^{N}$ to $\Z$.  We apply this result to obtain the cutoff window and profile of ASEP on the segment with  flat, half-flat and step initial data. 
Our arguments are entirely probabilistic and make no use of  Hecke algebras.

\end{abstract}
\section{Introduction}
In this paper we consider the asymmetric simple exclusion process (ASEP) on a finite segment  $[a_N; b_N]=\{a_N,a_N+1,\ldots, b_N\}\subseteq \Z$ with an initial configuration $\xi^{N}$ that has $k_N$ particles.  
This is a continuous time Markov chain with state space
$$\Omega_{b_N, a_N,k_N}=\left\{\xi^{N}: [a_N; b_N]\to \{0,1\}, \sum_{i=a_N}^{b_N}\xi^{N}(i)=k_{N}\right\}.$$
We consider the $1's$ as particles and the $0's$ as holes. Each particle waits an exponential time with mean $1$ and then attempts to jump one step to the right with probability $p>1/2$ or to jump one step to the left with probability $q=1-p$. At the boundaries $a_N,b_N$, no particle can enter or exit. The jump attempt is successful if the target  position is occupied by a hole, see Section \ref{secgraph} for a more detailed construction. 
Note that by shifting $b_N$   we may take w.l.o.g. $a_N=1$ but it will be convenient to keep $a_N$ a free parameter.  ASEP on the segment $[a_N; b_N]$ with an initial data $\xi^{N}$ having $k_N$ particles has a unique invariant measure  which we denote by 
$\pi_{b_N,a_N, k_N}$.  For $a_N=1$ we set $\pi_{b_N, k_N}:=\pi_{b_N,1, k_N}. $We are interested how this process mixes to equilibrium and study the total variation (TV) distance 
\begin{equation}\label{TV}
d_{\xi^{N}}(t)=||P_{t}^{\xi^{N}}-\pi_{b_N, a_N, k_N}||_{\mathrm{TV}}:=\max_{A\subseteq \Omega_{b_N, a_N,k_N}}|P_{t}^{\xi^{N}}(A)-\pi_{b_N, a_N, k_N}(A)|,
\end{equation}
where $P_{t}^{\xi^{N}}$ is the law of ASEP started from $\xi^{N}$ at time $t$.  The maximal TV distance was studied previously, with the following result:

\begin{tthm}[Theorem 1 of \cite{BN22}]\label{mainold}Consider ASEP on $[1;N].$ Let $k=k_{N}$ with $\lim_{N\to\infty}k_{N}/N = \alpha$ and  $\alpha \in (0,1)$. Then 
we have for $c\in \R$ 
\begin{equation}\label{yesss}
\lim_{N\to \infty}\max_{\xi^{N} }d_{\xi^{N}}\left(\frac{(\sqrt{k_N}+\sqrt{N-k_N})^{2}+cN^{1/3}}{p-q}\right) =1-F_{\GUE}(cf(\alpha)), 
\end{equation}
where $f(\alpha)=\frac{(\alpha(1-\alpha))^{1/6}}{(\sqrt{\alpha}+\sqrt{1-\alpha})^{4/3}}$.
\end{tthm}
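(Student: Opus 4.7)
My plan is to reduce to step initial data and then exploit the Tracy--Widom GUE asymptotics for ASEP on $\Z$. Let $\xi^{N}_{\mathrm{step}}$ denote the step configuration $\xi^{N}_{\mathrm{step}}(i) = \one_{\{i \le k_N\}}$. The first step is to show that step initial data is the asymptotic worst case, i.e.\ $\max_{\xi^N} d_{\xi^N}(t) = d_{\xi^{N}_{\mathrm{step}}}(t) + o(1)$. This should follow from a monotone coupling argument (via colored/multi-species ASEP or a height-function representation); intuitively $\xi^{N}_{\mathrm{step}}$ is ``maximally far'' from the blocking-type invariant measure $\pi_{N,k_N}$, because all $k_N$ particles must traverse most of the segment before the system equilibrates.

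For the step initial data, couple the ASEP on $[1;N]$ with ASEP on $\Z$ started from the extension $\ww\xi(i) = \one_{\{i\le k_N\}}$. I would identify a test event $A$ for the TV distance via the height function or a tagged-particle observable (for instance, ``the leftmost particle has reached position $N - k_N + O(N^{1/3})$'', which corresponds via the Johansson bijection to a last-passage time on a $k_N \times (N-k_N)$ rectangle). Writing $t_N(c) := [(\sqrt{k_N}+\sqrt{N-k_N})^2 + cN^{1/3}]/(p-q)$, the classical Tracy--Widom GUE limit for step ASEP on $\Z$ (rescaled by $p-q$) yields
\[
P_{t_N(c)}^{\xi^{N}_{\mathrm{step}}}(A) \longrightarrow F_{\GUE}(cf(\alpha)),
\]
where $f(\alpha) = (\alpha(1-\alpha))^{1/6}/(\sqrt{\alpha}+\sqrt{1-\alpha})^{4/3}$ is the reciprocal of the KPZ scaling constant in the Tracy--Widom law for LPP on a $k_N\times(N-k_N)$ rectangle. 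The macroscopic time $(\sqrt{k_N}+\sqrt{N-k_N})^2/(p-q)$ is precisely the hydrodynamic last-passage time for all $k_N$ particles to reach positions $\{N-k_N+1,\ldots,N\}$. Combined with $\pi_{N,k_N}(A) \to 1$ (since $A$ describes configurations typical under the stationary blocking measure when $p>q$), one obtains $|P_{t_N(c)}^{\xi^{N}_{\mathrm{step}}}(A) - \pi_{N,k_N}(A)| \to 1 - F_{\GUE}(cf(\alpha))$, giving the lower bound. The matching upper bound comes from the extremality established in Step~1 together with the optimality of $A$ at this space-time scale.

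The main technical obstacle is Step~1: TV distance is not monotone under any obvious coordinate-wise order on configurations, so step extremality requires a delicate coupling argument, plausibly via multi-species ASEP combined with monotonicity of the height-function process. A secondary subtlety is controlling the event $A$ under $\pi_{N,k_N}$, whose density profile is concentrated on ``particles packed to the right'' but has $O(1)$-scale fluctuations at the interface between hole-rich and particle-rich regions; one needs to verify that these microscopic fluctuations do not spoil the KPZ-scale matching with $P_t$. Finally, transferring the Tracy--Widom asymptotics from line to segment requires that the coupling between the two dynamics remains accurate up to time $t_N(c)$, which should hold because the finite-volume and line processes only decouple after a particle first interacts with the right boundary -- an event whose time aligns precisely with $t_N(c)$.
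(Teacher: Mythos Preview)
Your lower-bound sketch is essentially right and matches what the paper (and \cite{BN22}) do: choose the event $A=\{\mathcal{L}(\xi^N_t)<N-k_N-N^\delta\}$, show $\pi_{N,k_N}(A)\to 0$ via exponential tail bounds for the blocking measure, and show $P_t^{\xi^N_{\mathrm{step}}}(A)\to 1-F_{\GUE}(cf(\alpha))$ by comparing with step-ASEP on $\Z$ and invoking Tracy--Widom. One correction: your remark that ``the finite-volume and line processes only decouple after a particle first interacts with the right boundary'' is false at time $t_N(c)$ --- the rightmost particles hit the wall far earlier, and the two dynamics have diverged substantially. The correct argument is a one-sided domination: under basic coupling the leftmost particle in the segment is always weakly to the left of $X_{k_N}$ in the line extension $\mathbf{1}_{(-\infty,k_N]}$ (the extra particles to its left only block left-jumps, and the right wall only slows it down). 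This gives the lower bound directly without needing the processes to agree.

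The genuine gap is your upper bound. Writing ``the matching upper bound comes from the extremality in Step~1 together with the optimality of $A$'' is not an argument: a single test event can only produce a \emph{lower} bound on total variation. You never explain why no other event does better, and there is no soft reason it should be so. Both \cite{BN22} and the present paper handle the upper bound by an entirely different route: they bound $\max_{\xi^N} d_{\xi^N}(t)\le \Pb(\mathfrak{H}>t)$ where $\mathfrak{H}$ is the hitting time of the packed configuration $\zeta^{\max}=\mathbf{1}_{\Z_{>N-k_N}}$ by the infinite-volume process started from $\zeta^0=\mathbf{1}_{[1,k_N]}+\mathbf{1}_{\Z_{>N}}$. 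This hitting-time bound holds for \emph{every} initial condition simultaneously (via the partial order $\preceq$ and the fact that once two configurations both reach $\xi^{\max}$ they have coalesced), so Step~1 is absorbed into it rather than being proved separately. The hard work is then to show $\Pb(\mathfrak{H}>t_N(c))\to 1-F_{\GUE}(cf(\alpha))$; in \cite{BN22} this uses a Hecke-algebra distributional identity, while the present paper uses second-class-particle estimates. Your proposal contains no mechanism for the upper bound at all.
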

Here $F_{\GUE}$  is the cumulative distribution function  of the Tracy-Widom GUE distribution originating in  the theory of random matrices \cite{TW94}, namely  it is the limit law  of the rescaled largest eigenvalue of 
a matrix drawn from the Gaussian Unitary Ensemble (GUE). 
This result establishes max-TV cutoff for ASEP:  convergence to equilibrium is abrupt on the time scale $N$, and becomes smooth   in a  $N^{1/3}$ cutoff window with $1-F_\GUE$ being the  cutoff profile describing the smooth transition. 

In this paper, we are interested in a fixed sequence of initial data $\xi^{N}$. We provide a general result, Theorem \ref{main}, which  can  be applied to obtain a $N^{1/3}$ cutoff window as well as the cutoff profile. 
As we will see, novel cutoff profiles appear.  Since it is not too hard to see that instead of taking the maximum in \eqref{yesss}  we may take the initial data  $\xi^{N}=\mathbf{1}_{[1;k_N]}$ in \eqref{yesss}, 
we  actually recover Theorem \ref{mainold}, see Theorem \ref{thm3}. 

The main methodological difference between \cite{BN22} and the present paper is that \cite{BN22} utilized symmetries obtained from viewing multi-color  ASEP  as a random walk on a Hecke algebra.
This method is particularly useful  for studying $\mathbf{1}_{[1;k_N]}$ as initial data.   
Here, we use only probabilistic arguments - couplings and second class particles -  which allows to treat different initial data in a unified way.  Let us give three applications of our approach. 

First we consider initial data  where every second site is initially occupied, leading to a particle density $1/2$. 
\begin{tthm}\label{thm1}Consider ASEP on $[1;N]$ with initial configuration $\xi^{N}=\mathbf{1}_{2\Z\cap [1;N]}$. Then 
\begin{equation*}
\lim_{N\to\infty}d_{\mathbf{1}_{2\Z\cap [1;N]}}\left(\frac{N+cN^{1/3}}{p-q}\right)=1-F_{\GOE}(2^{-2/3}c).
\end{equation*}
\end{tthm}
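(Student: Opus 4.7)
The strategy is to deduce Theorem \ref{thm1} directly from the general Theorem \ref{main}. To apply the latter, one first specifies an extension of the segment initial data $\xi^N = \mathbf{1}_{2\Z \cap [1;N]}$ to an initial configuration $\tilde{\xi}^N$ on all of $\Z$. The natural choice is the full flat configuration $\tilde{\xi}^N = \mathbf{1}_{2\Z}$, which agrees with $\xi^N$ on $[1;N]$ and preserves the density $1/2$ on both sides. One then verifies the KPZ-type limit theorem for ASEP on $\Z$ started from $\mathbf{1}_{2\Z}$ that Theorem \ref{main} requires as input.

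This KPZ input is a classical result: for ASEP on $\Z$ with flat initial data $\mathbf{1}_{2\Z}$ and a fixed bond (say, between $0$ and $1$), the centered and rescaled integrated current $J_t$ converges to a GOE Tracy-Widom variable,
\begin{equation*}
\frac{(p-q)^{-1} J_t - t/4}{2^{-4/3}\, t^{1/3}} \;\Rightarrow\; -\chi_{\GOE}, \qquad t \to \infty,
\end{equation*}
with $\chi_{\GOE} \sim F_{\GOE}$. This was proved in the ASEP setting by Tracy-Widom, extending earlier TASEP results of Borodin-Ferrari-Sasamoto. Feeding this limit into Theorem \ref{main} at time $t = (N + cN^{1/3})/(p-q)$ produces a cutoff profile of the form $1 - F_{\GOE}(\gamma c)$; tracking how the $2^{-4/3} t^{1/3}$ line-fluctuation scale combines with the density-dependent constants built into Theorem \ref{main} yields $\gamma = 2^{-2/3}$, matching the claim.

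The main technical point is to check that $\tilde{\xi}^N = \mathbf{1}_{2\Z}$ satisfies whatever regularity hypotheses Theorem \ref{main} imposes: beyond the one-point KPZ limit, some uniform tail control on the current fluctuations is typically required, for instance to couple rigorously with equilibrium via second-class particles and to pass from weak convergence of a single statistic to convergence of the total variation distance. For flat ASEP such tail bounds follow from the same Fredholm-determinantal formulas that underpin the GOE Tracy-Widom limit, so this step should be routine once the correct observable at the correct characteristic is identified.

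Conceptually, the reason the leading mixing time $N/(p-q)$ is smaller than the step-data value $2N/(p-q)$ implicit in Theorem \ref{mainold} is that the flat initial data already carries the equilibrium density, so no macroscopic mass rearrangement is needed; only the $N^{1/3}$-scale microscopic fluctuations must mix, and these do so on the ballistic time scale $N/(p-q)$. I expect this identification of the correct characteristic line — together with the translation of the line-level Tracy-Widom GOE limit into segment-level TV behavior through the machinery of Theorem \ref{main} — to be the main obstacle; once it is in place, the announced profile $1 - F_{\GOE}(2^{-2/3} c)$ is immediate.
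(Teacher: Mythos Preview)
Your overall strategy---apply Theorem \ref{main} with the flat extension $\eta^{\infty}=\mathbf{1}_{2\Z}$ and feed in a GOE Tracy--Widom limit for flat ASEP---is exactly what the paper does (in fact the paper proves the more general Theorem \ref{thm4} and recovers Theorem \ref{thm1} as the $\rho=1/2$ case). So the route is right.

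There are, however, two substantive inaccuracies in how you fill it in. First, the attribution of the flat-ASEP GOE limit to Tracy--Widom is wrong: Tracy--Widom handled step and step--Bernoulli ASEP, not flat. The GOE one-point law for flat \emph{ASEP} (as opposed to TASEP) was a long-standing gap and is obtained in the paper via the convergence of ASEP to the KPZ fixed point due to Aggarwal--Corwin--Hegde \cite{ACH24} (Theorem \ref{thm29} here), identifying the limit with $\mathfrak{h}(0,0,1/2)$ and hence $F_{\GOE}$. Your claimed ``classical'' input does not exist in the literature in the form you state.

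Second, you have not actually looked at what Theorem \ref{main} asks for. It does \emph{not} require tail bounds or second-class-particle estimates at the application level (those are already absorbed into the proof of Theorem \ref{main} itself). What it requires are the specific limits \eqref{particles} and \eqref{holes} for $X_{k_N+j}$ and $H_{b_N-k_N+j}$ with $j\in\{-N^{\delta},0,N^{\delta}\}$ and some $\kappa\in(0,1/3)$. The paper checks these by rewriting the particle/hole events in height-function form, shifting so that the observation point sits on the correct characteristic $(1-2\rho)Nt$, and then invoking Theorem \ref{thm29}; the constant $2^{-2/3}$ falls out as $(\rho(1-\rho))^{1/3}$ at $\rho=1/2$. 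Your proposal gestures at ``tracking constants'' but does not do this computation, and your description of the hypotheses (``uniform tail control'', ``couple via second-class particles'') does not match what is actually needed.
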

Here $F_\GOE$ is the Tracy-Widom $\GOE$ distribution, again arising in random matrix theory \cite{TW94}. We thus see again cutoff with a $N^{1/3}$ window,  but with a different profile.  The restriction to density $1/2$ is not important, in Theorem \ref{thm4} of Section \ref{proofsec} we prove a more general statement for arbitrary densities. 

Next we consider so called half-flat initial data where initially every second negative integer is occupied.  

\begin{tthm}\label{thm2}Consider ASEP on $[-N/2;N/4]$ with initial configuration $\xi^{N}=\mathbf{1}_{2\Z\cap [-N/2;0]}$. Then 
\begin{equation*}
\lim_{N\to\infty}d_{\mathbf{1}_{2\Z\cap [-N/2;0]}}\left(\frac{N+cN^{1/3}}{p-q}\right)=1-\Pb(\mathcal{A}_{2\to1}(0)\leq 2^{-\frac{2}{3}}c).
\end{equation*}
\end{tthm}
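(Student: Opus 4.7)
\medskip

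\noindent\textbf{Proof plan.} The plan is to apply the paper's general framework, Theorem \ref{main}, which reduces the TV cutoff statement for ASEP on a segment started from $\xi^{N}$ to a one-point KPZ-type limit theorem for a suitable extension of $\xi^{N}$ to all of $\Z$. The natural extension here is the standard infinite half-flat configuration $\widetilde{\xi}:=\mathbf{1}_{2\Z\cap(-\infty,0]}$, and the KPZ input required is the convergence of the height function (equivalently, the integrated current) of half-flat ASEP at the origin to the one-point distribution of the Airy$_{2\to 1}$ process, originally identified by Borodin--Ferrari--Sasamoto for TASEP.

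First I would verify that the specific choice of segment $[-N/2; N/4]$ makes the boundaries invisible to the dynamics near the origin on the time scale $t_{N}(c)=(N+cN^{1/3})/(p-q)$. The characteristic speed at density $1/2$ is zero, so the bulk flat profile to the left of $0$ does not feel the left wall at $-N/2$; the right moving front has effective speed $(p-q)$, and the buffer of length $N/4$ to the right is just enough to absorb its displacement up to the relevant precision. A basic coupling argument then identifies the $\xi^{N}$-segment dynamics with the $\widetilde{\xi}$-line dynamics on a macroscopic window around $0$ with probability $1-o(1)$ at time $t_{N}(c)$.

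Second I would feed in the half-flat one-point limit: under the KPZ scaling appropriate to density $1/2$, the rescaled current (or second class particle position) of $\widetilde{\xi}$-ASEP across the origin at time $N/(p-q)$ converges in distribution to $\mathcal{A}_{2\to 1}(0)$, with the same scaling constant $2^{-2/3}$ that appears in the flat case (Theorem \ref{thm1}). Inserting this input into Theorem \ref{main} then produces the cutoff profile $1-\Pb(\mathcal{A}_{2\to 1}(0)\leq 2^{-2/3}c)$.

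The main obstacle I anticipate is securing the half-flat one-point KPZ limit for ASEP with general $p>q$ in precisely the observable form that Theorem \ref{main} demands (be it current, height, or tagged second class particle). For TASEP this statement is classical, while for ASEP it must be either quoted from recent half-space/half-flat work or transferred from TASEP by a universality/coupling argument. A secondary but genuinely technical obstacle is controlling the coupling of the preceding paragraph at the $N^{1/3}$ scale, which requires a careful tracking of the second class particles separating the segment and the infinite line dynamics so that no boundary correction leaks into the profile.
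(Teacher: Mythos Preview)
Your high-level plan is the same as the paper's: apply Theorem~\ref{main} with the extension $\eta^{\infty}=\mathbf{1}_{2\Z_{\le 0}}$ and feed in the half-flat one-point limit. However, your ``First'' step and your ``secondary obstacle'' reflect a misreading of what Theorem~\ref{main} asks of you. The hypotheses \eqref{particles}--\eqref{holes} concern only the \emph{infinite} process $\eta^{\infty}$; the comparison between segment dynamics and line dynamics (including the second-class-particle control at scale $N^{1/3}$) is precisely the content of Sections~\ref{seclower}--\ref{secupper} and is already packaged into Theorem~\ref{main}. You do not need to redo any of it. All you must do is verify the two tagged-particle/hole limits for half-flat ASEP on $\Z$.

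For that remaining task---your ``main obstacle''---the paper's route is short and avoids quoting any ASEP-specific half-flat result. With $k_N=N/4$ and $b_N-k_N=0$, the event $\{X_{k_N+j}(\g)\le -N^{\kappa}\}$ is rewritten as a height-function inequality \eqref{yes5}, and one then applies Theorem~\ref{thm29} (the general ASEP $\to$ KPZ fixed point convergence of \cite{ACH24}) after checking that the rescaled half-flat profile $\mathfrak{h}_{0}^{1/N}$ converges locally in UC to $\mathfrak{h}^{hf}=-\infty\,\mathbf{1}_{(0,\infty)}$. The limiting one-point law is then identified, via \cite{KQR16}, as $\mathcal{A}_{2\to 1}(0)$, with the $2^{-2/3}$ factor coming from the scaling \eqref{h2} at $\rho=1/2$. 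The hole hypothesis \eqref{holes} reduces to the same height-function inequality up to an $O(N^{\kappa})$ spatial shift, since there are exactly $N/4$ holes initially in $[1;N/4]$. So no TASEP-to-ASEP transfer or separate half-flat literature is needed; \cite{ACH24} handles $p>q$ directly.
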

Here $\mathcal{A}_{2\to1}$ is the $\mathrm{Airy}_{2\to 1}$ process introduced in \cite{BFS07}, whose one point distribution interpolates beteween $F_\GUE$ and $F_\GOE$. 
The choice of the interval $[-N/2;N/4]$  is not arbitrary.  For ASEP on $\Z$ started from $\mathbf{1}_{2\Z_{\leq 0}},$ at time $(N+cN^{1/3})/(p-q)$, the   particle initially located at (the even integer closest to) $-N/2$ and the hole initially  located at  $N/4$    will  have passed by $0$ with probability converging to $\Pb(\mathcal{A}_{2\to1}(0)\leq 2^{-\frac{2}{3}}c)$.  In the ASEP on $[-N/2;N/4]$   there are $k_N=N/4$ particles in the system,  and the system mixes when the  rightmost hole  (which starts at $N/4$) resp. the leftmost particle (which starts at $-N/2$) come close to $0$, i.e. at the position, where one observes the $\mathrm{Airy}_{2\to 1}$ process on $\Z$. 


Finally, we consider the initial data  $\xi^{N}=\mathbf{1}_{[1;k_N]}.$  Here we obtain the same mixing behaviour as for the max-TV distance. 
\begin{tthm}\label{thm3}Consider ASEP on $[1;N]$ with initial configuration $\xi^{N}=\mathbf{1}_{[1;k_N]}$ with $k_N /N\to \alpha\in (0,1)$. 
We have for $c\in \R$ 
\begin{equation}
\lim_{N\to \infty}d_{\xi^{N}}^{N, k_{N}}\left(\frac{(\sqrt{k_N}+\sqrt{N-k_N})^{2}+cN^{1/3}}{p-q}\right) =1-F_{\GUE}(cf(\alpha)), 
\end{equation}
where $f(\alpha)=\frac{(\alpha(1-\alpha))^{1/6}}{(\sqrt{\alpha}+\sqrt{1-\alpha})^{4/3}}$.
\end{tthm}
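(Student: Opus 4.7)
The plan is to derive Theorem~\ref{thm3} as a direct application of the general cutoff profile result Theorem~\ref{main}, whose input is a KPZ-type limit theorem on $\Z$ for a suitable extension of $\xi^{N} = \mathbf{1}_{[1;k_N]}$ from the segment to the line.

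\textbf{Extension and KPZ input.} The natural candidate is the step initial condition $\hat{\xi}^{N} = \mathbf{1}_{\Z_{\leq k_N}}$, obtained from $\xi^{N}$ by filling $\Z_{\leq 0}$ with an infinite reservoir of additional particles. For ASEP on $\Z$ started from a step profile, the classical Tracy--Widom theorem supplies exactly the KPZ-type input required by Theorem~\ref{main}: for a tracer particle of macroscopic index (equivalently, for the integrated current across a reference in the rarefaction fan), the fluctuations on scale $t^{1/3}$ follow the $F_{\GUE}$ law, with explicit hydrodynamic mean and an explicit normalization.

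\textbf{Matching of scales.} A deterministic computation based on the rarefaction-fan density $\rho(y,T) = (1 - y/T)/2$ of the step IC, together with the shift identifying the tracer's role in the segment geometry, produces the critical time
\begin{equation*}
T_N \;=\; \frac{(\sqrt{k_N}+\sqrt{N-k_N})^{2}}{p-q}.
\end{equation*}
One solves the hydrodynamic equation $y_k(T) = T - 2\sqrt{kT}$ for the tracer of index $k_N$ reaching the segment boundary; a short algebraic simplification reduces the Tracy--Widom scale coefficient at this time to precisely $f(\alpha)^{-1}N^{1/3}$, with $f$ as in the statement. This calibrates both the center and the window of the cutoff to the values claimed.

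\textbf{Application of Theorem~\ref{main}.} Inserting this KPZ input at the identified scaling into Theorem~\ref{main} yields directly
\begin{equation*}
\lim_{N\to\infty} d_{\xi^{N}}\!\left(\frac{T_N + cN^{1/3}/(p-q)}{1}\right) \;=\; 1 - F_{\GUE}\!\bigl(c f(\alpha)\bigr),
\end{equation*}
which is exactly Theorem~\ref{thm3}. Combined with the max-TV upper bound of Theorem~\ref{mainold}, this also shows that the maximum in \eqref{yesss} is attained asymptotically at the block initial data $\mathbf{1}_{[1;k_N]}$.

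\textbf{Main obstacle.} The delicate step, which I expect to be the main technical obstacle, is verifying the coupling hypothesis of Theorem~\ref{main} for this specific extension. The segment ASEP has reflecting walls at $1$ and $N$, while $\hat{\xi}^{N}$ places infinitely many extra particles on $\Z_{\leq 0}$; in principle these extras could invade $[1;N]$ and corrupt the tracer whose fluctuations govern the TV distance. Using the basic coupling together with second-class-particle estimates of the type employed earlier in the paper, one must show that the discrepancy between the tracer in the extension and the corresponding object on the segment remains $o(N^{1/3})$ throughout the window up to time $T_N$. Once this localization is secured, the Tracy--Widom GUE limit on $\Z$ transfers cleanly to the segment and Theorem~\ref{main} delivers the claimed profile.
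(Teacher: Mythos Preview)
Your overall strategy is correct and matches the paper's: extend $\xi^{N}=\mathbf{1}_{[1;k_N]}$ to the step initial data on $\Z$ (the paper shifts the segment and uses $\mathbf{1}_{\Z_{\leq 0}}$, which is the same thing), invoke the Tracy--Widom limit for ASEP with step initial data, and apply Theorem~\ref{main}.

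However, your ``Main obstacle'' paragraph misreads the hypotheses of Theorem~\ref{main}. There is no coupling hypothesis to verify between the segment process and the extension; the comparison between $\xi^{N}_{t}$ on $[1;b_N]$ and the extension on $\Z$ is precisely what Theorem~\ref{main} handles internally via the machinery of Sections~\ref{seclower}--\ref{secupper} (second-class particles, censoring, hitting times). The only inputs you must supply are the two one-point limit statements \eqref{particles} and \eqref{holes} for the extension on $\Z$, each for $j\in\{-N^{\delta},0,N^{\delta}\}$. For the step extension, \eqref{particles} is exactly Corollary~1 of \cite{BN22} (a reformulation of Tracy--Widom \cite{TW08b}), and the perturbation by $j=\pm N^{\delta}$ with $\delta<1/3$ is absorbed by the $N^{1/3}$ fluctuation scale. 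You do not address \eqref{holes} at all; the paper dispatches it in one line by particle--hole duality, which for step initial data is an exact symmetry swapping the role of $X_{k_N+j}$ and $H_{b_N-k_N+j}$.

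In short: drop the coupling discussion, and replace it by the one-line verification of \eqref{holes} via particle--hole duality. The proof is then complete and essentially identical to the paper's.
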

Since particles have a drift to the right, the equilibrium measure gives most of its mass to configurations  where the leftmost particle and rightmost hole are in $\mathcal{O}(1)$ distance to  $b_N-k_N$, see \eqref{CC} for a precise statement.  Since $\mathbf{1}_{[1;k_N]}$ places particles as much to the left as possible,  it is intuitive that $\xi^{N}$ will give the max-TV cutoff profile.

Theorems \ref{thm1}, \ref{thm2}, \ref{thm3} are proven in Section \ref{proofsec}, all proofs consist in applying Theorem \ref{main} below. 

\subsection{Main abstract Theorem} 
Here we provide our main abstract theorem which relates  the cutoff profile to the fluctuations of the  particles and holes  in an extension $\eta^{\infty}$ of $\xi^{N}$ to $\Z$.  
W.l.o.g. we work here with the segment $[1;b_N],$ we can always shift the interval and $\xi_N$  when needed. Given a particle configuration $\zeta: A\to\{0,1\},A\subseteq\Z,$ we call
\begin{align*}
&\LL(\zeta)=\inf\{i:\zeta(i)=1\}
\\&\RR(\zeta)=\sup\{i:\zeta(i)=0\}
\end{align*}
the (possibly infinite) position of the leftmost particle resp. rightmost hole.

Let $\xi^{N} \in \{0,1\}^{[1;b_N]} $ with $\sum_{i=1}^{b_N}\xi^{N}(i)=k_N$. We call an element $\eta^{\infty} \in \{0,1\}^{\Z}$ an extension of $\xi^{N}$ if
$$ \eta^{\infty}(j)=\xi^{N}(j), j\in [1;b_N].$$
If $\eta^{\infty}$ is an extension of $\xi^{N} $, we call 
\begin{align}\label{XH}
X_{k_{N}}:=\LL(\xi^{N}) \quad H_{b_{N}-k_{N}}:=\RR(\xi^{N})
\end{align}
the particle resp. hole of $\eta^{\infty} $ which is initially located at $\LL(\xi^{N})$ resp. $\RR(\xi^{N})$. 
We label the particles resp. holes of $\eta^{\infty}$  as
\begin{align}
&\cdots<X_{k_{N}+1}<X_{k_{N}}<X_{k_{N}-1}<\cdots\label{particleslab}
\\&\cdots<H_{b_N-k_{N}-1}<H_{b_N-k_{N}}<H_{b_N-k_{N}+1}<\cdots\label{holeslab}
\end{align}
and denote by $X_j (t)$ resp. $ H_i (t)$ the position at time $t$ of the particle $X_j$ resp. hole $H_i$ whenever the labels $i,j\in \Z$ have been assigned. 
We adopt the KPZ-type time scaling
\begin{equation}\label{g}
  g(\xi^{N},c) := \frac{D(\xi^{N})N + c N^{1/3}}{p-q},
  \qquad c\in\mathbb{R}
\end{equation}
where $D(\xi^{N})>0$  depends on $\xi^{N}$.
The following is our main generic theorem. 
\begin{tthm}\label{main}
Let $\xi^{N} \in \{0,1\}^{[1;b_N]} $ with   $\sum_{i=1}^{b_N}\xi^{N}(i)=k_N$ and $\delta,\kappa\in (0,1/3)$. We assume there is an extension $\eta^{\infty}$ of $\xi^{N},$ and a time scale $\g$ as in \eqref{g},  such that 
for $j\in \{-N^{\delta},0,N^{\delta}\}$  we have  with  $X_{k_{N}},H_{b_N -k_N}$  as in \eqref{XH} 
\begin{align}\label{particles}
&\lim_{N\to\infty}\Pb(X_{k_N+j}(\g)\leq b_N-k_N-N^{\kappa})=1-F(c)
\\&\lim_{N\to\infty}\Pb(H_{b_N-k_N+j}(\g)\geq b_N-k_N+N^{\kappa})=1-F(c),\label{holes}
\end{align}
where $F$ is a continuous probability distribution function.  Then 
\begin{equation}
\lim_{N\to\infty}d_{\xi^{N}}(\g)=1-F(c). 
\end{equation}
\end{tthm}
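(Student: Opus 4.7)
My plan is to establish matching upper and lower bounds on $d_{\xi^{N}}(\g)$.

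For the \emph{lower bound}, I would exhibit the distinguishing event
\begin{equation*}
E=\{\LL(\xi^{N}_{\g})\le b_N-k_N-N^{\kappa}\}\cup\{\RR(\xi^{N}_{\g})\ge b_N-k_N+N^{\kappa}\},
\end{equation*}
where $\xi^{N}_{t}$ denotes the segment ASEP at time $t$ started from $\xi^{N}$. The invariant measure $\pi_{b_N,k_N}$ is of blocking type with geometric decay away from $b_N-k_N$, so a direct computation gives $\pi_{b_N,k_N}(E)\to 0$. To lower bound $\Pb_{\xi^{N}}(E)$ I would couple the segment ASEP with the $\Z$-ASEP started from $\eta^{\infty}$ via the graphical construction. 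The particles of $\eta^{\infty}$ sitting to the left of $X_{k_{N}}$ obstruct its leftward motion, while on the segment there is no such obstruction beyond the wall at $1$; this yields the monotonicity $\LL(\xi^{N}_{\g})\le X_{k_{N}}(\g)$ and symmetrically $\RR(\xi^{N}_{\g})\ge H_{b_N-k_N}(\g)$. Assumptions \eqref{particles}--\eqref{holes} with $j=0$ then give $\Pb_{\xi^{N}}(E)\to 1-F(c)$, whence $d_{\xi^{N}}(\g)\ge 1-F(c)-o(1)$.

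For the \emph{upper bound}, which is the main difficulty, I would couple $(\xi^{N}_{\g})$ with a process started from $\pi_{b_N,k_N}$ so that they agree at time $\g$ with probability at least $F(c)-o(1)$. The strategy is to exploit the labels $j=\pm N^{\delta}$ in \eqref{particles}--\eqref{holes} to establish a reverse sandwich: by comparing with modified $\Z$-ASEPs in which $N^{\delta}$ extra particles (resp.\ holes) have been added to or deleted from $\eta^{\infty}$ near $X_{k_N}$ (resp.\ near $H_{b_N-k_N}$), monotonicity of basic coupling should yield bounds such as $X_{k_{N}+N^{\delta}}(\g)\le \LL(\xi^{N}_{\g})$ on a high-probability event, and symmetrically for holes. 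Combined with the forward monotonicity above this localizes $\LL(\xi^{N}_{\g})$ and $\RR(\xi^{N}_{\g})$ inside a mesoscopic window of length $o(N)$ around $b_N-k_N$ with probability tending to $F(c)$. Inside this window, basic coupling with a stationary sample together with local equilibration of ASEP on a region of size $N^{\kappa}$ (mixing time $\Or(N^{3\kappa/2})=o(\g)$ as $\kappa<1/3$) and the explicit product form of $\pi_{b_N,k_N}$ outside the window should deliver the desired TV coupling.

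The \emph{main obstacle} is this upper bound: upgrading positional control of $\LL$ and $\RR$ to a full TV coupling requires a careful second-class-particle or censoring argument to bound the propagation of discrepancies on the correct length and time scales, and the uniformity of the limit across $j\in\{-N^{\delta},0,N^{\delta}\}$ must be used to make the sandwich estimates tight enough for this purpose. Once both bounds are in hand one concludes $\lim_{N\to\infty}d_{\xi^{N}}(\g)=1-F(c)$.
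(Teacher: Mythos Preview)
Your lower-bound argument contains a genuine gap. The claimed monotonicity $\LL(\xi^{N}_{\g})\le X_{k_{N}}(\g)$ under basic coupling is \emph{false} in general. Your reasoning only addresses what happens to the left of $X_{k_N}$, but the extension $\eta^{\infty}$ is arbitrary on $\Z\setminus[1;b_N]$: it may have \emph{no} particles at positions $\le 0$ (so $X_{k_N}$ can drift below $1$ while $\LL(\xi^{N}_{t})$ cannot), and it may have holes at positions $>b_N$ (so particles in $\eta^{\infty}$ escape to the right while the segment wall at $b_N$ creates a traffic jam that holds $\LL(\xi^{N}_{t})$ back). A two-site counterexample already shows $X_{k_N}(t)<\LL(\xi^{N}_{t})$ with positive probability. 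The paper explicitly flags this issue (see the remark following Theorem~\ref{thm3} in the introduction) and devotes Section~\ref{seclower} to it: one proves only the weaker, asymptotic statement $\Pb(\LL(\xi^{N}_{\g})\le X_{k_N}(\g)+N^{\delta})\to 1$ via a chain of four intermediate configurations $\eta^{\infty,1},\ldots,\eta^{\infty,4}$, each modifying boundaries or inserting buffers of $N^{\delta}$ holes so that a legitimate monotonicity or a Proposition~\ref{block}--type tail bound applies at every step. Your ``symmetrically $\RR(\xi^{N}_{\g})\ge H_{b_N-k_N}(\g)$'' has the same defect.

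For the upper bound, your sketch is in the right spirit but misses the concrete mechanism. The paper does not couple directly to a stationary sample via local mixing; instead it passes to the configuration $\zeta^{N}=\bar{\xi}^{\infty}+\mathbf{1}_{\Z_{>b_N}}$ on $\Z$ and bounds $d_{\xi^{N}}(t)$ by the tail of the hitting time $\mathfrak{H}(\zeta^{N})=\inf\{t:\zeta^{N}_{t}=\zeta^{\max}\}$ (Proposition~\ref{dprop}). The heart of the matter is then Corollaries~\ref{cor} and~\ref{cor2}, which show $\Pb(\LL(\zeta^{N}_{\g})\ge b_N-k_N-N^{\kappa})\to F(c)$ and the analogue for $\RR$. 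Here is where the assumption at $j=\pm N^{\delta}$ is actually used: one couples $\eta^{\infty}$ and its truncation $\hat{\xi}^{\infty}=\eta^{\infty}\mathbf{1}_{\Z_{\ge 1}}$ as a two-class system and invokes the overtaking bounds of Section~\ref{secovertake} (Theorems~\ref{61}--\ref{62}, proved by a censoring argument) to show that at most $N^{\delta'}$ second-class particles can cross $\LL(\hat{\xi}^{\infty}_{s})$, so that $X_{k_N+N^{\delta'}}$ never overtakes $\LL(\hat{\xi}^{\infty}_{\cdot})$ with high probability. Your proposed ``reverse sandwich'' $X_{k_N+N^{\delta}}(\g)\le\LL(\xi^{N}_{\g})$ again cannot be obtained from basic-coupling monotonicity alone, for the same reasons as above; the overtaking estimates are what make this work.
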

\begin{proof}
In Theorem \ref{lower_bound_thm2}, proven in Section \ref{seclower}, $1-F$ is established as lower bound  of $\lim_{N\to\infty}d_{\xi^{N}}(\g)$ under weaker assumptions.
In Corollary \ref{corfinal},  proven in Section \ref{secupper}, $1-F$ is established as upper bound, finishing the proof. 
\end{proof}

The preceding theorem allows us to compute the cutoff profile whenever we have a KPZ-type limit theorem available for $\eta^{\infty}$. By the known convergence of ASEP to the KPZ fixed point \cite{ACH24}, see Section \ref{onepoint} for the details, this is the case for a large class of configurations $\eta^{\infty}$. The choice of the extension will often be straightforward, in the cases of Theorems \ref{thm1}-\ref{thm3}, we may take $\mathbf{1}_{2\Z},\mathbf{1}_{2\Z_{\leq 0}},\mathbf{1}_{(-\infty;k_N]}.$  

Note that in general there is no logical relation between \eqref{particles} and \eqref{holes}, even though they are actually equivalent in the  applications we consider. 
To explain, take $j=0$ and consider for instance TASEP on $[1;10N]$ with particles initially located at $2\Z\cap[1;N] $ and $[N;10N-1],$ i.e. we have an isolated hole at position $10N,$ if it weren't for this hole, Theorem \ref{thm1} would apply (note  that in TASEP the particles move as if they were restricted $[1;N]$). Then one may check that for any extension  \eqref{particles} holds with $\g=N+cN^{1/3}$ and $F(c)=F_{\GOE}(2^{-2/3}c)$. However, for the system to mix, the rightmost hole, which performs a random walk,  needs to travel to position $N/2$ which will take about $9.5 N$   long  with fluctuations that are of order $N^{1/2}$ and asymptotically gaussian, thus \eqref{holes} holds at a different (larger) time scale with a different fluctuation exponent and different $F$, which dictates the mixing behaviour.  While we believe that such  situations could be included in our framework, we do not pursue this here. 
\subsection{Related Literature}
 
In the setting of Theorem \ref{main} and its applications  we always have  an abrupt convergence to equilibrium on scale $N$. In the language of Markov chains, this is an example of cutoff, which happens  at time $D(\xi^{N})N$, with a  $N^{1/3}$ window and limit (or cutoff) profile $1-F$, with $F$ being a one-point law of the KPZ fixed point,   the conjecturally universal limiting object of growth models in the Kardar-Parisi-Zhang (KPZ) universality class introduced in \cite{KQR16}. 

For earlier results on cutoff , we refer to the seminal papers \cite{A81}, \cite{DS81} as well as the review article \cite{D95}. 
The  very recent lecture notes   \cite{Sal1}  provide an overview on the cutoff phenomenon and its origins,  and explain  recently found general conditions (such as non-negative curvature) which guarantee cutoff. 

On the level of general results for profiles,  it was shown that for any continuous probability distribution function $F$  we have that $1-F$  may appear as limit profile, see \cite{Tey25}. Conditions establishing continuity of limit profiles  were recently given by \cite{N255}. 
Given this variety of possible limit profiles, a variety of different methods has been used to find them.  To give just a small selection of  recent examples, let us mention that the cutoff profile of random transpositions \cite{Tes20},  various reversible Markov chains \cite{NOT23} and the Bernoulli-Laplace urn \cite{SOS25} have been recently obtained. 

For ASEP,  the max-TV cutoff profile is known on a closed segment (this is the aforementioned Theorem \ref{mainold}), for  ASEP with one open boundary  \cite{HS22}  as well as the multicolor ASEP \cite{Z22}. Cutoff and Precutoff had been established earlier  in \cite{LL19} and  \cite{BBHM}, the paper  \cite{LL19} in particular also studied the mixing behavior of ASEP when started from an initial configuration $\xi^{N}$, and  in Theorem 3 of \cite{LL19} the constant $D(\xi^{N})$ is identified for general $\xi^{N}$.   For symmetric simple exclusion on the circle  the limit  profile is also known \cite{L16}.

All the results for limit profiles of  ASEP used, among other tools,  the integrability of ASEP in the form of Hecke algebras. In this paper, we rely instead  on probabilistic methods, in particular second class particles, to relate the limit profile to the limit law observed on $\Z$. 
It seems non-trivial to find such a relation using Hecke algebras.
Given that the convergence to the KPZ fixed point on $\Z$  is established for a large class of initial data in \cite{ACH24} (and is expected even under weaker assumptions than in \cite{ACH24}, see the remark after (2.10)  of \cite{ACH24})
we may thus obtain limit profiles fairly generally. 

Similar to \cite{BN22}, we seperately establish $1-F$ as lower and upper bound. For the lower bound, the task becomes more challenging as in \cite{BN22} as extending a general configuration $\xi^{N}$ to $\Z$ does not necessarily make particles move faster to the right (when $\xi^{N}=\mathbf{1}_{[1;k_N]}$ as in \cite{BN22} and we extend $\xi^{N}$ to $\Z$  as $\mathbf{1}_{-(\infty;k_N]}$ the particles in $[1;k_N]$ cannot slow down by the added particles in $(-\infty;0]$).  The lower bound is established  in Section \ref{seclower}. For the upper bound, we rely as \cite{BN22}  on hitting times, which were first used in \cite{BBHM} to study the mixing behaviour of ASEP,  however the link to the TV distance is again more involved as in \cite{BN22}. The key part is then to use second class particles to obtain the asymptotics of the  hitting time. This is done in Section \ref{secupper}.
For this, we need novel bounds on  the number of overtaking holes and second class particles, which are established in Section \ref{secovertake}.
\section{Preliminaries}\label{secgraph}
Let us start by briefly recalling the graphical construction of ASEP. Let $\mathcal{P}(z),z\in \Z,$ be a familiy of i.i.d. Poisson processes on $\R_+$ with rate $p$. 
Given any time point $t>0$, note that a.s. there is a double-infinite sequence  $(i_{n},n\in \Z)$ of integers 
$$\cdots<i_n<i_{n-1}<\cdots$$
such that $\mathcal{P}_{t}(i_{n})=0,n\in \Z,$ i.e. the Poisson processes $(\mathcal{P}(i_{n}),n\in \Z)$ all have made no jump up to time $t$. 
Given a configuration $\eta\in \{0,1\}^{\Z}$, we denote the   configuration  with updated edge $(z,z+1)$  by 
\begin{equation*}
\sigma_{z,z+1}(\eta)(i)=
\begin{cases}
\eta(i+1)  &\mathrm{for} \, i=z\\
\eta(i-1)  &\mathrm{for} \, i=z+1\\
\eta(i)   &\mathrm{else}.
\end{cases}
\end{equation*}
The update rule of ASEP is now as follows: We start from a configuration $\eta\in \{0,1\}^{\Z}$ at time $t=0$. We construct a Markov process $(\eta_{t}, t\geq 0)$  as follows:  Whenever $\mathcal{P}(z)$ has a jump at time $\tau$,  and $\eta_{\tau^{-}}(z)>\eta_{\tau^{-}}(z+1),$ we set $\eta_{\tau}:=\sigma_{z,z+1}(\eta_{\tau^{-}})$. 
If $\eta_{\tau^{-}}(z)<\eta_{\tau^{-}}(z+1),$ we toss an independent coin with success parameter $Q=q/p$, and with probability $Q$, $\eta_{\tau}:=\sigma_{z,z+1}(\eta_{\tau^{-}})$, and with probability $1-Q$, we set $\eta_{\tau}:=\eta_{\tau^{-}}.$
 Note that up to time $t$, we only need to apply these update rules inside each of the segments $[i_n;i_{n+1}]$ so that there is a well-defined first jump inside each such segment. Note also this construction works for any subset $A$ of consecutive integers, in all cases considered here, particles cannot enter or exit through the boundaries of  $A$.   The construction of ASEP on a segment $[a;b]$ is especially   easy: given an initial configuration $\xi$,   we may simply take Poisson processes $\mathcal{P}(a),\ldots, \mathcal{P}(b-1)$ and apply the update rules. 
\subsection{Basic coupling and second class particles}
Given two initial configurations $\eta,\mu$, we can couple the processes $(\eta_{t}, t\geq 0),(\mu_{t}, t\geq 0)$ by constructing them with the same family of Poisson processes  $\mathcal{P}(z),z\in \Z$. This coupling is called the basic coupling. Note we can also  couple   ASEPs on different subsets on $\Z$ this way.

Let $\mu,\eta\in \{0,1\}^{\Z}$ be such that $\mu(i)\leq \eta(i),i\in\Z,$ we write $\mu\leq \eta$ as shorthand for this.  Then, under the basic coupling, $\mu_t \leq \eta_t$ for all $t\geq 0$. We then say that a site $i$ is occupied by a second class particle at time $t$  if $\mu_t (i)<\eta_t (i)$, 
and we say that $i$ is occupied by a first class particle  if $\mu_{t}(i)=\eta_{t}(i)=1.$ We write the first class particle as $1's$, the second class particles as $2's$ and the holes as $0's$. Setting $\zeta_t := 2*\eta_t-\mu_{t}$ we obtain an ASEP $(\zeta_t, t\geq 0)$ on $\{0,1,2\}^{\Z}$. 

 It follows from the construction that the $2's$ treat the $0's$ as holes, and are treated as holes by the $1's$.  More formally,  whenever $\{\zeta_{\tau^{-}}(z),\zeta_{\tau^{-}}(z+1)\}=\{0,2\}$ we update the edge $(z,z+1)$  by the same rule as before. 
If   $\eta_{\tau^{-}}(z)=1, \eta_{\tau^{-}}(z+1)=2$ we update the process as $\zeta_{\tau}:=\sigma_{z,z+1}(\eta_{\tau^{-}}),$  and if $\zeta_{\tau^{-}}(z)=2, \zeta_{\tau^{-}}(z+1)=1,$ we toss an independent coin with success parameter $Q=q/p$, and with probability $Q$, $\zeta_{\tau}:=\sigma_{z,z+1}(\zeta_{\tau^{-}})$, and with probability $1-Q$, we set $\zeta_{\tau}:=\zeta_{\tau^{-}}.$ 

\subsection{One point limit laws of ASEP}\label{onepoint}
KPZ-type one point limit laws for ASEP were first established for Step \cite{TW08b},  Step-Bernoulli \cite{TW09b} and stationary  \cite{A18} initial data.
The convergence of ASEP (in fact, several coupled ASEPs)  to the KPZ fixed point (which includes the convergence of one point distributions as special case) was recently established 
in \cite{ACH24} for general initial conditions  which are bounded below lines of arbitrary slope at $\pm\infty$. 
 
In the following, we quickly review the convergence result Theorem 2.9 of \cite{ACH24} specialized to the  setting in which we make use of it.  We refer to Section 2 of \cite{ACH24}  for the details. 
Given an initial particle configuration $\eta_0$, associate to it a height function $h_0$ by setting
$$ \eta_{0}(x)=h_0 (x-1) - h_{0}(x), \quad h_{0}(0):=0. $$
Given an initial height profile $h_0$ we define the height function $h(h_0,y,t)$ as follows: Whenever a particle jumps from $y$ to $y+1$ at a time $r$,   we update $h(h_0,y,r):=h(h_0,y,r^{-})+1$ and $h(h_0,x,r)=h(h_0,x,r^{-})$ for $x\neq y,$ 
whereas when a particle jumps from $y$ to $y-1$ at time $r$, we update $h(h_0,y-1,r):=h(h_0,y-1,r^{-})-1$ and $h(h_0,x,r)=h(h_0,x,r^{-})$ for $x\neq y-1$. 

 Let $\rho\in (0,1)$  and set $2^{5/3}(\rho(1-\rho))^{1/3}=:\kappa(\rho).$\footnote{The variables $\varepsilon,\alpha,\mu(\alpha),\mu^{'}(\alpha),\beta(\alpha)$ from  page 9 of \cite{ACH24} relate to ours as $\varepsilon=1/N,\alpha=1-2\rho,\mu(\alpha)=\rho^{2},\mu^{'}(\alpha)=-\rho, \beta(\alpha)=\kappa(\rho).$}
 Given a sequence of initial height functions $h_{0}^{1/N}$ we set
\begin{equation}\label{h1}\mathfrak{h}^{1/N}_{0}(x):=2^{-1/3}(\rho(1-\rho))^{-2/3}N^{-1/3}(- \rho \kappa(\rho) x N^{2/3} - h_{0}^{1/N}( \kappa(\rho) x N^{2/3})).\end{equation}
Let $\gamma=p-q$, which is the drift of a single-particle (in \cite{ACH24}, this drift is $1-q$, but this simply amounts to   a rescaling of time). 
We then define the rescaled height function 
\begin{equation}
\begin{aligned}\label{h2}
\mathfrak{h}^{1/N}(\mathfrak{h}^{1/N}_{0}, x,t)=\frac{(\rho(1-\rho))^{-2/3}}{(2N)^{1/3}}&\bigg(2\rho^{2}tN-\rho\kappa(\rho) xN^{2/3} \\&-h(h_{0}^{1/N},2(1-2\rho)Nt+\kappa(\rho)xN^{2/3}, 2 Nt/\gamma)\bigg).
\end{aligned}
\end{equation}
The following gives the convergence of one-point distributions of ASEP. For the full  and much stronger original statement, see Theorem 2.9 of \cite{ACH24}. For the definition of the KPZ fixed point, see Definition 2.8  of \cite{ACH24}, for local convergence in UC,  see page 137 of \cite{KQR16}. 
\begin{tthm}[Special case of Theorem 2.9 of \cite{ACH24}]\label{thm29}
Let $\mathfrak{h}^{1/N}_{0},N\geq 1$ be a sequence of rescaled initial height functions as in \eqref{h1}. Assume that $\mathfrak{h}^{1/N}_{0}\to \mathfrak{h}_{0}$ locally in UC with $\mathfrak{h}_{0}:\R\to \R\cup\{-\infty\}$ a UC function,  and that  for a $\lambda>0$ we have for all $x;N$ 
\begin{equation*}
\mathfrak{h}_{0}^{1/N}(x)\leq C(1+|x|) \quad \sup_{x\in [-\lambda,\lambda]}\mathfrak{h}_{0}^{1/N}(x) \geq -\lambda. 
\end{equation*}
Then $\mathfrak{h}^{1/N}(\mathfrak{h}^{1/N}_{0}, x,t)$ converges in distribution to $\mathfrak{h}(\mathfrak{h}_0, x,t),$   the KPZ fixed point started from $\mathfrak{h}_0$  and evaluated at space point $x$ and time point $t$. 
\end{tthm}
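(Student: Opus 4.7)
The plan is to derive Theorem \ref{thm29} as a direct specialization of [ACH24, Theorem 2.9], so the ``proof'' amounts to setting up the dictionary between our notation and theirs and checking the hypotheses transfer. First I would record the parameter correspondence: as in the footnote, set $\varepsilon=1/N$, $\alpha=1-2\rho$, $\mu(\alpha)=\rho^{2}$, $\mu'(\alpha)=-\rho$, and $\beta(\alpha)=\kappa(\rho)=2^{5/3}(\rho(1-\rho))^{1/3}$. Under this substitution, the prefactor $2^{-1/3}(\rho(1-\rho))^{-2/3}N^{-1/3}$ appearing in \eqref{h1} and \eqref{h2} is exactly the $\varepsilon^{1/2}$-type normalization used in [ACH24], while the macroscopic drift term $2\rho^{2} t N$ and shear term $2(1-2\rho)Nt$ in \eqref{h2} match the characteristic line and curvature corrections in the KPZ scaling of [ACH24] at density $\rho$.

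Second, I would reconcile the time normalization. In [ACH24] the single-particle drift is $1-q$, whereas our rates $(p,q)$ give drift $\gamma=p-q$; this is precisely why the time variable in \eqref{h2} is $2Nt/\gamma$ rather than $2Nt/(1-q)$. Since the dynamics under rate scaling $(1,q) \mapsto (p,q)$ is just a deterministic time change by a factor of $p$, and the combined drift then becomes $\gamma$, this choice of time argument makes the two rescaled height functions agree exactly. This parameter bookkeeping is the main (and essentially only) obstacle in the argument; once it is done carefully, every object on our side is literally the $\rho$-density specialization of the corresponding object in [ACH24].

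Third, I would verify the hypotheses of [ACH24, Theorem 2.9] are met. The hypotheses we impose on $\mathfrak{h}_0^{1/N}$ -- locally uniform convergence in UC to a UC limit $\mathfrak{h}_0$, the sublinear upper bound $\mathfrak{h}_0^{1/N}(x)\leq C(1+|x|)$, and the non-degeneracy $\sup_{x\in[-\lambda,\lambda]}\mathfrak{h}_0^{1/N}(x)\geq -\lambda$ -- are exactly the ``bounded above by lines of arbitrary slope at $\pm\infty$'' plus non-triviality conditions under which [ACH24, Theorem 2.9] gives convergence of the ASEP height function to the KPZ fixed point. No further estimates are needed on our side.

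Finally, [ACH24, Theorem 2.9] yields joint (multi-point, multi-time) convergence of the rescaled height function to $\mathfrak{h}(\mathfrak{h}_0,\cdot,\cdot)$; specializing to a single space-time point $(x,t)$ gives the one-point convergence in distribution claimed in Theorem \ref{thm29}. Thus the proof consists entirely of the above translation, with no probabilistic work beyond what is already contained in [ACH24].
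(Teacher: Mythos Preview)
Your proposal is correct and matches the paper's approach: the paper does not give an independent proof but simply cites this as a special case of \cite[Theorem~2.9]{ACH24}, with the parameter dictionary recorded in the footnote and the surrounding text of Section~\ref{onepoint}. Your write-up spells out that translation explicitly, which is exactly the content required.
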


\subsection{Censoring Inequality}
We use the following partial order for configurations $\eta,\zeta$ with $\LL(\eta),\LL(\zeta)>-\infty$ :
\[
   \zeta \preceq \eta
  \quad\Longleftrightarrow\quad
  \sum_{i=-\infty}^r \eta(i) \;\le\; \sum_{i=-\infty}^r \zeta(i)
  \quad \text{for all } r\in\mathbb{Z},
\]
meaning that $\eta$ places (weakly) more particles to the right than $\zeta$.
ASEP is \emph{attractive} with respect to this order: if $\eta_0 \succeq \zeta_0$,  then 
$\eta_t \succeq \zeta_t$ for all $t\ge0$ under the basic coupling. Clearly, if $\eta_0 \succeq \zeta_0$, then $\LL(\eta_0)\geq \LL(\zeta_0)$. 

Let $(\eta_t,t\geq 0)$ be an ASEP. A censoring scheme $\C$ is a random cadlag function, independent of $(\eta_t,t\geq 0), $ which maps 
$$\mathcal{C}: \R_{\geq 0} \to \mathcal{P}(E),$$ where $\mathcal{P}(E)$ is the power set of edges on which the process $(\eta_t,t\geq 0)$ is defined. We call the edges $(i,i+1)\in \mathcal{C}(t)$ censored. We then define a censored process $(\eta_{t,\mathcal{C}},t\geq 0)$
by imposing that   a transition along an edge $e$ in the uncensored process $(\eta_t,t\geq 0)$  
at time $\ell$ is also  performed 
in the censored process happens  iff $e \notin \C(\ell),$ and no other transitions are performed in $(\eta_{t,\mathcal{C}},t\geq 0)$.
The following Proposition is Remark 2.13 of \cite{GNS20}, here we define for $Z\in \Z$  the ASEP  $ ( \eta^{-\mathrm{step}(Z)}_{\ell}, \ell\geq 0) $ started from the reversed step initial data
\begin{equation*}
\eta^{-\mathrm{step}(Z)}(j)=\mathbf{1}_{\Z_{\geq Z}}(j),
\end{equation*}
for $Z=0$ we simply write $\eta^{-\mathrm{step}}$.
\begin{prop}[Remark 2.13 of \cite{GNS20}]\label{censor}
Let $\C$ be a censoring scheme for $(\eta^{-step(Z)}_t , t\geq 0)$. Then there is a coupling of $(\eta^{-step(Z)}_{t},t\geq 0)$ and $(\eta^{-step(Z)}_{t,\C},t\geq 0)$ such that $$\Pb(\eta^{-step(Z)}_{t}\preceq
\eta^{-step(Z)}_{t,\C})=1.$$ In particular, for all $x\in \Z$, we have 
$$\Pb(\LL(\eta^{-step(Z)}_{t})\leq x)\geq \Pb(\LL(\eta^{-step(Z)}_{t,\C})\leq x).$$
\end{prop}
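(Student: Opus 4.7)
The plan is to adapt the Peres--Winkler censoring inequality, originally developed for reversible chains, to ASEP started from $\eta^{-\mathrm{step}(Z)}$, exploiting both the attractiveness of ASEP and the extremal character of reversed-step data in the order $\preceq$; I follow the approach of \cite{GNS20}. First I reduce to piecewise-constant censoring schemes $\C$ with finitely many updates in $[0,t]$: any cadlag $\C$ can be approximated in this way, and the coupling statement passes through the limit by continuity of ASEP in the censoring scheme. Then I induct on the number of constancy intervals, and on any interval of the form $[s,s+h]$ where $\C$ is a fixed set $E$ of censored edges I construct a coupling of the uncensored and censored evolutions that preserves $\preceq$ between them. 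For the base case and for intervals with $E=\emptyset$ the basic coupling gives identical processes, so $\preceq$ is trivially preserved.

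The core of the argument is the one-interval extension. Between Poisson clocks the coupled configurations are constant. At a clock on an edge $e\notin E$ both processes attempt the same update and attractiveness preserves the order; at a clock on $e\in E$ only the uncensored process attempts the update, so that the partial sum $\sum_{i\le z}\eta^{-\mathrm{step}(Z)}_t$ (for the uncensored process) may decrease by one without a corresponding decrease on the censored side. To prevent a violation of $\preceq$ at $r=z$, one exploits a quantitative slack property inherited from the extremality of $\eta^{-\mathrm{step}(Z)}$: the joint law of the coupled pair at time $s$ concentrates on configurations in which the censored one is ``rightward-shifted'' relative to the uncensored one to a degree that is compatible with a unit decrease of the uncensored partial sum at the censored edge.

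The main obstacle is the construction of the coupling at a censored edge and the propagation of this slack invariant. The naive basic coupling (shared Poisson clocks) does not, in general, preserve $\preceq$: one can exhibit small examples in which the $\preceq$ relation flips after a single move at an edge where one process has local configuration $(1,0)$ and the other $(0,1)$. The resolution, carried out in \cite{GNS20}, is a more careful joint coupling that compensates for such would-be violations by using an auxiliary update (or a re-coupled coin) on the censored side, made possible by the fact that the common starting point $\eta^{-\mathrm{step}(Z)}$ is extremal in $\preceq$ among all configurations with infinite right tail and that this extremality is transported along the evolution. Once the coupling is constructed on each constancy interval, one concatenates over the finitely many intervals and then passes to the cadlag limit, yielding $\Pb(\eta^{-\mathrm{step}(Z)}_t\preceq\eta^{-\mathrm{step}(Z)}_{t,\C})=1$. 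The inequality on $\LL$ stated in the proposition then follows immediately from the observation, noted just after the definition of $\preceq$ in the paper, that $\eta\preceq\mu$ implies $\LL(\eta)\le\LL(\mu)$.
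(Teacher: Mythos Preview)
The paper does not prove this proposition: it is stated as a citation of Remark~2.13 of \cite{GNS20} and used as a black box. There is therefore no ``paper's own proof'' to compare against.

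That said, your sketch has a genuine gap precisely at the point that carries all the content. You correctly observe that the basic coupling does \emph{not} preserve $\preceq$ across a censored edge (a rightward jump in the uncensored process at a censored edge can break the inequality), but your resolution is just a pointer back to \cite{GNS20}: ``a more careful joint coupling that compensates for such would-be violations by using an auxiliary update (or a re-coupled coin) on the censored side.'' This is not a construction, and the heuristic you offer --- that extremality of $\eta^{-\mathrm{step}(Z)}$ provides enough ``slack'' to absorb a unit decrease at the censored edge --- is not made precise and is not how the censoring inequality is actually proved. In particular, there is no pathwise coupling that maintains $\preceq$ through every censored ring; the Peres--Winkler argument, and its adaptation in \cite{GNS20}, operates at the level of \emph{laws}: one shows that starting from the maximal element the time-$t$ law of the uncensored chain is stochastically dominated (in $\preceq$) by that of the censored chain, by propagating a monotone likelihood-ratio / increasing-density property through each update and each censoring. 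The coupling whose existence is asserted in the statement is then obtained a posteriori from this stochastic domination (Strassen's theorem), not built directly along the trajectories.

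So: your outline up to and including ``the basic coupling fails'' is fine, but the paragraph that follows is the entire proof and it is missing. If you want to give a self-contained argument, replace the attempted pathwise coupling by the distributional argument: show that if $\mu$ stochastically dominates $\nu$ in $\preceq$ and both are ``increasing'' relative to the dynamics (which holds when starting from the $\preceq$-maximal configuration $\eta^{-\mathrm{step}(Z)}$), then applying one ASEP update to $\mu$ and either the same update or nothing to $\nu$ preserves the domination; iterate over the Poisson clocks.
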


\section{Proof of Theorems \ref{thm1}, \ref{thm2}, \ref{thm3}.}\label{proofsec}
Here we apply Theorem \ref{main} to prove Theorems \ref{thm1}, \ref{thm2}, \ref{thm3}.  As input, we use the convergence of one-point distributions to the KPZ fixed point, explained in Section \ref{onepoint}, for Theorem \ref{thm3} we directly use the result of \cite{TW08}, in the form of Corollary 1 of \cite{BN22}, which gives exactly  the convergence \eqref{particles}, 

We start by showing the following generalization\footnote{With \eqref{rho} being obviously  satisfied for $\rho=1/2$ for $\xi^{N}$ of Theorem \ref{thm1}. } 
of Theorem \ref{thm1}.
\begin{tthm}\label{thm4}Consider ASEP on $[1;N]$ with initial configuration $\xi^{N}$ such that there  is a $\rho\in (0,1)$ and a function $R(N)$ with $R(N)N^{-1/3}\to0$ such that for all $0\leq a<b\leq 1$ we have 
\begin{equation}\label{rho}\left| \sum_{i=aN}^{bN}\xi^{N}(i)-\rho(b-a)N\right|\leq R(N).\end{equation}
Then 
$$\lim_{N\to\infty}d_{\xi^{N}}\left(\frac{N+cN^{1/3}}{p-q}\right)=1-F_{\GOE}((\rho(1-\rho))^{1/3}c). $$
\end{tthm}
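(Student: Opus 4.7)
The plan is to apply Theorem~\ref{main} with $D(\xi^N)=1$ and target profile $F(c)=F_{\GOE}((\rho(1-\rho))^{1/3}c)$, so that $g(\xi^N,c)=(N+cN^{1/3})/(p-q)$. I construct the extension $\eta^\infty$ by letting it agree with $\xi^N$ on $[1,N]$ and fill in the deterministic density-$\rho$ Sturmian sequence $\eta^\infty(i)=\lfloor\rho(i+1)\rfloor-\lfloor\rho i\rfloor$ on $\Z\setminus[1,N]$. Combined with \eqref{rho}, the density of $\eta^\infty$ over any interval of $\Z$ differs from $\rho$ times its length by at most $R(N)+O(1)=o(N^{1/3})$.

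Next I verify KPZ convergence. With the rescaling of \eqref{h1} at density $\rho$, the above bound gives $\sup_x|\mathfrak{h}_0^{1/N}(x)|=O(R(N)N^{-1/3})\to 0$, so $\mathfrak{h}_0^{1/N}\to 0$ uniformly, in particular in UC. The growth hypotheses of Theorem~\ref{thm29} are trivially met by the flat profile, and that theorem yields $\mathfrak{h}^{1/N}(x,t)\Rightarrow \mathfrak{h}(0,x,t)$, the flat KPZ fixed point, whose one-point marginals are of Tracy-Widom GOE type. With $T=g(\xi^N,c)$ the KPZ time satisfies $t^*_N=T(p-q)/(2N)=1/2+O(N^{-2/3})\to 1/2$, and the factor $(t^*_N)^{1/3}\to 2^{-1/3}$ is exactly the power of $2$ that will combine with $(\rho(1-\rho))^{1/3}$ below.

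The core task is to translate \eqref{particles} into a height-function event. Writing $y^*=b_N-k_N-N^\kappa$ and $L=\LL(\xi^N)$, order preservation of ASEP yields
\[
 \{X_{k_N+j}(g)\le y^*\} \;\iff\; h(y^*,g)-h(y^*,0)\le j+\rho(y^*-L)+O(R(N))
\]
(the right-hand side is, up to $O(R(N))$, the initial number of particles of $\eta^\infty$ in $(L,y^*]$, offset by $j$; this uses \eqref{rho} and the net-flux identity $h(y,T)-h(y,0)=$ net rightward jumps at edge $(y,y{+}1)$). Since a standard consequence of \eqref{rho} gives $L=O(R(N))$, and $h(y^*,0)=-\rho y^*+O(R(N))$ for the Sturmian extension, the event reduces to $h(y^*,g)\le o(N^{1/3})$. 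Plugging in $\mathbb{E}[h(y^*,g)]=\rho(1-\rho)(p-q)g-\rho y^*$, using $y^*=N-k_N-N^\kappa$ and $|k_N-\rho N|\le R(N)=o(N^{1/3})$, gives $\mathbb{E}[h(y^*,g)]=\rho(1-\rho)cN^{1/3}+o(N^{1/3})$. Rewriting via \eqref{h2}, the event becomes $\{\mathfrak{h}^{1/N}(x^*_N,t^*_N)\ge 2^{-1/3}(\rho(1-\rho))^{1/3}c+o(1)\}$ with $x^*_N\sim\rho N^{1/3}/\kappa(\rho)$. Passing to the limit and matching with the Tracy-Widom GOE law of the flat KPZ fixed point's one-point marginal yields the probability $1-F_{\GOE}((\rho(1-\rho))^{1/3}c)$. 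The shift $j\in\{\pm N^\delta\}$ perturbs the threshold by at most $O(N^\delta)=o(N^{1/3})$ and is absorbed. The hole condition \eqref{holes} follows by the particle–hole duality $(p,q,\rho)\mapsto(q,p,1-\rho)$, under which both $(\rho(1-\rho))^{1/3}$ and the time scale are invariant.

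The main obstacle is the divergence of the KPZ spatial coordinate $x^*_N\to\infty$, caused by the mismatch between the tagged-particle speed $(1-\rho)(p-q)$ and the characteristic speed $(1-2\rho)(p-q)$ around which \eqref{h2} is centred. Since Theorem~\ref{thm29} delivers convergence at each fixed $x$, I would resolve this by shifting the initial profile by $x^*_N$; the uniform (not just local) convergence $\mathfrak{h}_0^{1/N}\to 0$ ensures the shifted profile still tends to the flat one, and then the translation invariance in law of the one-point marginals of the flat KPZ fixed point delivers the $F_{\GOE}$ limit.
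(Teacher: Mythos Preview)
Your approach is correct and follows the same scheme as the paper: apply Theorem~\ref{main}, extend $\xi^N$ to a configuration with uniform density $\rho$ on $\Z$, translate \eqref{particles} into a height-function inequality, and invoke Theorem~\ref{thm29} to get the flat KPZ fixed point and hence $F_{\GOE}$. The computations leading to the threshold $2^{-1/3}(\rho(1-\rho))^{1/3}c$ are the same.

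There are three presentational differences worth noting. First, the paper uses the periodic extension (pasting copies of $\xi^N$) while you use the Sturmian sequence; both give uniform $o(N^{1/3})$ density error, so either works. Second, and more substantively, the paper avoids your diverging $x^*_N\sim\rho N^{1/3}/\kappa(\rho)$ altogether by \emph{shifting the segment $[1;b_N]$ at the outset} so that the target position $b_N-k_N-N^\kappa$ coincides with the characteristic location $(1-2\rho)Nt$; this places the observation point at $x=0$ in \eqref{h2} directly. Your fix---shift the rescaled initial profile by $x^*_N$ and use that $\mathfrak h_0^{1/N}\to 0$ uniformly, not just locally---is equivalent and works for exactly the reason you state, but the paper's upfront shift is cleaner. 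Third, for \eqref{holes} the paper does not invoke particle--hole duality; it simply observes that the hole event rewrites as the same height-function inequality \eqref{yes} up to a shift by $2N^\kappa$, which is $o(N^{1/3})$ and hence irrelevant. Your duality argument is a valid alternative.

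One small comment: your line ``the event reduces to $h(y^*,g)\le o(N^{1/3})$'' is shorthand for ``$h(y^*,g)$ minus its centering is $\le -\rho(1-\rho)cN^{1/3}+o(N^{1/3})$''; as written it is potentially confusing since $h(y^*,g)$ itself is typically of order $N^{1/3}$, not $o(N^{1/3})$.
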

\begin{proof}
We apply Theorem \ref{main}. 
Throughout the proof, $R_{i}(N),i \geq 1$  will denote a sequence with $R_{i}(N)N^{-1/3}\to 0$ whose concrete values are immaterial. 
As extension we may take any configuration which satisfies \eqref{rho} for all $a,b\in \R, a<b$, we may e.g. paste $\xi^{N}$ in each interval $[kN,(k+1)N]$ for $k\in \Z\setminus{\{0\}}$ to obtain such a $\eta^{\infty}.$ 

Note that by assumption we have  $k_N=\rho N+R_1(N)$ and there are $\rho(1-\rho)N+R_2(N)$ particles in $[1;b_N-k_N]$.   Take $t=1-cN^{-2/3}$ such that $tN=N+cN^{1/3}$. Note we may shift the interval $[1;b_N]$  such that $b_N-k_N-N^{\kappa}=(1-2\rho)Nt$ .
Let $h_{0}^{1/N}$ be the height function of $\eta^{\infty}$ and note that since \eqref{rho} holds for $a,b\in \R$, $\mathfrak{h}_{0}^{1/N}\to 0$ locally in UC.  The event 
$$ X_{k_N+j}(\g)\leq b_N-k_N-N^{\kappa}$$
is the same as the event  (recall $\gamma=p-q$ and thus $Nt/\gamma=\g$) 
\begin{equation}\label{yes}
h(h_{0}^{1/N}, (1-2\rho)Nt, Nt/\gamma)-h_{0}^{1/N}((1-2\rho)Nt)\leq \rho(1-\rho)N+R_2(N).
\end{equation}

Since $h_{0}^{1/N}((1-2\rho)Nt)=-\rho(1-2\rho)Nt+R_3(N)$ we see that \eqref{yes} is equivalent to 
\begin{equation}\label{yes2}
h(h_{0}^{1/N}, (1-2\rho)Nt, Nt/\gamma) -\rho^{2}Nt \leq - \rho(1-\rho)cN^{1/3}+R_4(N)
\end{equation}
which leads to 
\begin{equation}\label{yes3}
 2^{-1/3}(\rho(1-\rho))^{1/3}c+R_5(N)N^{-1/3}\leq \frac{\rho^{2}Nt-h(h_{0}^{1/N}, (1-2\rho)Nt, Nt/\gamma)}{N^{1/3}2^{1/3}(\rho(1-\rho))^{2/3}}.
\end{equation}
Now the r.h.s. of \eqref{yes3} equals $\mathfrak{h}^{1/N}(\mathfrak{h}_{0}^{1/N},0,t/2)$  and thus  converges by Theorem \ref{thm29} in law to $\mathfrak{h}(0,0,1/2),$ the KPZ fixed point started from flat initial data evaluated at $x=0$ and time $1/2$. We thus obtain 
 \begin{align}
 &\lim_{N\to\infty}\Pb(X_{k_N+j}(\g)\leq b_N-k_N-N^{\kappa})
 \\&=\lim_{N\to\infty}\Pb\left(\frac{(\rho(1-\rho))^{1/3}}{2^{1/3}}c \leq \frac{\rho^{2}Nt-h(h_{0}^{1/N}, (1-2\rho)Nt, Nt/\gamma)}{N^{1/3}2^{1/3}(\rho(1-\rho))^{2/3}}\right)
 \\&=\Pb\left(\frac{(\rho(1-\rho))^{1/3}}{2^{1/3}}c \leq \mathfrak{h}(0,0,1/2)\right)\label{p}
 \\&=1-F_{\GOE}((\rho(1-\rho))^{1/3}c). \label{p2}
 \end{align}
 In the last step, we used that the KPZ fixed point started from $0$ is the $\mathrm{Airy}_1$  process with $\GOE$ one-point distribution (see \cite{KQR16},  page 160).
 
 Finally, to prove \eqref{holes}, it suffices to note that there are $\rho(1-\rho)N+R_{5}(N)$ many holes present initially in $[b_N-k_N,b_N],$ hence  
$$  H_{b_N-k_N+j}(\g)\geq b_N-k_N+N^{\kappa}$$ is equivalent  to 
\begin{equation}\label{yes4}
h(h_{0}^{1/N}, (1-2\rho)Nt+2N^{\kappa}, Nt/\gamma)-h_{0}^{1/N}((1-2\rho)Nt+2N^{\kappa})\leq \rho(1-\rho)N+R_5(N),
\end{equation}
which is just \eqref{yes} up to the asymptotically irrelevant shift by $2N^{\kappa}$ (note $\kappa<1/3$).  Hence   \eqref{holes} follows by the same proof as before. 
\end{proof}
\begin{proof}[Proof of Theorem \ref{thm2}]
We again apply Theorem \ref{main}. As extension we take $\eta^{\infty}=\mathbf{1}_{2\Z_{\leq 0}}$. Then $\mathfrak{h}^{1/N}_{0}$ converges locally in UC to the UC function $\mathfrak{h}^{hf}=-\infty \mathbf{1}_{(0,+\infty)}$ with $\rho=1/2$. 
We have $k_{N}=N/4, b_N-k_N=0$. As in the proof of Theorem \ref{thm4}, by  $R_{i}(N) $ we denote a sequence with $R_{i}(N) N^{-1/3}\to 0$.
Then
$$ X_{k_N+j}(\g)\leq b_N-k_N-N^{\kappa}$$
is the same as the event 
\begin{equation}\label{yes5}
h(h_{0}^{1/N},R_6 (N),( N+cN^{1/3})/\gamma ) \leq N/4 +R_7 (N). 
\end{equation}
Setting $\tilde{N}=N+cN^{1/3}$ we obtain 
\begin{equation}\label{yes17}
-h(h_{0}^{1/N} , R_{8}(\tilde{N}),\tilde{N}/\gamma ) +\tilde{N}/4 \geq c\tilde{N}^{1/3}/4+ R_{9}(\tilde{N}). 
\end{equation}
Applying to \eqref{yes5} the rescaling \eqref{h2} with $x=0,\rho=1/2$, we obtain as $N\to \infty$ limit of the l.h.s. of \eqref{yes17}  the KPZ fixed point started from $\mathfrak{h}^{hf}$ and evaluated at  $x=0,t=1/2,$ which  is   $\mathcal{A}_{2\to 1}(0)$ (see \cite{KQR16},  page 161), establishing  \eqref{particles}.  The statement \eqref{holes} follows in exactly the same way, note there are $N/4$ holes present initially in $[1;N/4]$ hence establishing \eqref{holes} is equivalent to establishing the convergence of the probability of the event  \eqref{yes5}, finishing the proof. 
\end{proof}
\begin{proof}[Proof of Theorem \ref{thm3}]
We may shift the interval $[1;N]$ to the left and consider $[-k_N;N-k_N+1]$ instead. As extension we take the step initial data $\mathbf{1}_{\Z_{\leq 0}}$.
The convergence \eqref{particles} is then exactly (12) of \cite{BN22} (which itself is a corollary of \cite{TW08}), whereas  the statement \eqref{holes} follows from particle-hole duality. 
\end{proof}
\section{Proof of the lower bound}\label{seclower}
Here we  show that $1-F$ is a lower bound to the cutoff profile. For this, it actually suffices to make an assumption about $X_{k_N}$, as the next Theorem shows:

\begin{tthm}\label{lower_bound_thm2}
Fix \(c \in \mathbb{R}\).  
Assume there is an extension $\eta^{\infty}$ and a time scale $\g$ as in \eqref{g} such that, with $X_{k_{N}}$ as in \eqref{XH}, we have  for some \(\delta>0\),
\[
  \lim_{N\to\infty} 
  \mathbb{P}\!\Big(
    X_{k_N}\big(\g\big) 
    \le 
    b_N - k_N - N^{\delta}
  \Big)
  = 1 - F(c),
\]
with  $F$  a continuous probability distribution function.
Then,
\[
  \liminf_{N\to\infty} 
  d_{k_N}\!\big(\g\big)
  \;\ge\;
  1 - F(c).
\]
\end{tthm}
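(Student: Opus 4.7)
The plan is to apply the distinguishing-event lower bound: I would choose the test event
\begin{equation*}
  A_N \;=\; \bigl\{\zeta \in \Omega_{b_N, 1, k_N} : \mathcal{L}(\zeta) \le b_N - k_N - N^{\delta/2}\bigr\},
\end{equation*}
and use $d_{\xi^{N}}(t) \ge P_t^{\xi^{N}}(A_N) - \pi_{b_N, k_N}(A_N)$. For the equilibrium term, the invariant measure $\pi_{b_N,k_N}$ of ASEP on $[1,b_N]$ with $p>q$ is known explicitly---on the slice $\sum_i \eta(i) = k_N$ it is proportional to $(p/q)^{\sum_i i\,\eta(i)}$---and concentrates at the right-packed configuration $\mathbf{1}_{[b_N-k_N+1, b_N]}$ with exponential tails on the position of the leftmost particle. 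A direct counting then gives $\pi_{b_N, k_N}(A_N) = O(N \cdot (q/p)^{N^{\delta/2}}) \to 0$.

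For the process term, I would set up the basic coupling of the segment ASEP $(\zeta_t)$ from $\xi^{N}$ and the $\mathbb{Z}$-ASEP $(\tilde{\eta}_t)$ from $\eta^{\infty}$ via common Poisson clocks, with the segment simply ignoring clocks at the two boundary edges $(0,1)$ and $(b_N, b_N+1)$. Let $Y_{k_N}(t)$ denote the position in $(\zeta_t)$ of the particle initially at $\mathcal{L}(\xi^{N})$; by ASEP's order preservation on the segment, $Y_{k_N}(t) = \mathcal{L}(\zeta_t)$. The lower bound then reduces to the coupling comparison
\begin{equation*}
  \lim_{N \to \infty} \mathbb{P}\!\left(Y_{k_N}(g(\xi^{N}, c)) \le X_{k_N}(g(\xi^{N}, c)) + N^{\delta/2}\right) \;=\; 1,
\end{equation*}
since, combined with the hypothesis, it yields
$P_{g(\xi^{N},c)}^{\xi^{N}}(A_N) \ge \mathbb{P}(X_{k_N}(g(\xi^{N},c)) \le b_N - k_N - N^{\delta}) - o(1) \to 1 - F(c)$.

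The main obstacle is this coupling comparison, which is delicate because the segment's two boundaries create effects pulling in opposite directions. The left reflecting boundary keeps $Y_{k_N}(t) \ge 1$ while the extension's $X_{k_N}$ may drift to the left of $0$, tending to push $Y_{k_N}$ above $X_{k_N}$; the right boundary produces back-pressure that slows rightward motion and tends to push $Y_{k_N}$ below $X_{k_N}$. I plan to handle the left effect via a large-deviation bound showing that the event $\{\inf_{s\le g(\xi^{N},c)} X_{k_N}(s) < 1\}$ has probability $o(1)$: since $X_{k_N}(g(\xi^{N},c))$ typically sits at distance $b_N - k_N \gg 0$ from the origin (this is built into the choice of $D(\xi^{N})$ in the time scale), an excursion of $X_{k_N}$ to the left of $1$ requires a backward deviation of order $\Omega(N)$ which is exponentially rare. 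On the complementary event, the right-boundary effect can be quantified by introducing virtual second-class particles at $(b_N, b_N+1)$ whenever a right-jump that is censored in the segment is performed in the extension; standard second-class-particle estimates (relying on the attractiveness of ASEP and the characteristic speed bounds) should yield that $o(N^{\delta/2})$ of these reach $X_{k_N}$'s location by time $g(\xi^{N},c)$, bounding the discrepancy $Y_{k_N}(t)-X_{k_N}(t)$ from above. Making this analysis precise uniformly in $\xi^{N}$ is the principal technical work of the lower bound.
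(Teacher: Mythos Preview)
Your overall strategy---using the distinguishing event $A_N = \{\zeta : \mathcal{L}(\zeta) \le b_N - k_N - N^{\delta/2}\}$, the exponential tail of $\pi_{b_N,k_N}$ on the leftmost-particle position, and reducing everything to the comparison $\mathcal{L}(\xi^N_{\g}) \le X_{k_N}(\g) + N^{\delta/2}$ with high probability---matches the paper's structure exactly; this comparison is precisely the paper's Theorem~\ref{lower_bound_thm1}, and the equilibrium bound is the same Proposition~4.2 of \cite{BN22}.

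The genuine gap is in your proposed proof of that comparison. Your left-boundary argument hinges on the claim that $\mathbb{P}\bigl(\inf_{s\le \g} X_{k_N}(s) < 1\bigr) = o(1)$, which is neither implied by the hypothesis nor true in general. The hypothesis controls only the time-$g$ position of $X_{k_N}$, not its trajectory; and $X_{k_N}(0) = \mathcal{L}(\xi^N)$ may well equal $1$, in which case a single left jump (which occurs at rate $q>0$) already sends $X_{k_N}$ below $1$---no $\Omega(N)$ backward excursion is required. The right-boundary plan via ``virtual second-class particles'' is also too loose to stand as a proof under the very weak one-point assumption on $\eta^\infty$; in particular you have no a priori control on how many such discrepancies are created or on their speed for a general extension.

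The paper proves the comparison by a completely different mechanism, with no direct discrepancy analysis. It builds a chain of four intermediate configurations $\eta^{\infty,1},\ldots,\eta^{\infty,4}$ (Propositions~\ref{prop:eta1_xi_N}--\ref{prop:eta3_eta4}), comparing leftmost particles step by step. The key device is a buffer of $N^\delta$ holes: it is first inserted just to the right of $b_N$, so that the reversed-step bound (Proposition~\ref{block}) guarantees the right tail of $\eta^\infty$ never re-enters $[1,b_N]$; then this buffer is transported, via the partial order $\preceq$, to the left of position $1$; now the leftmost particle starts at distance at least $N^\delta$ from the left wall, so removing that wall is again harmless by the same reversed-step bound; finally, reinserting the particles of $\eta^\infty$ on the non-positive integers can only push the tagged particle further right. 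The $N^\delta$ spatial shift accumulated along this chain is exactly the $+N^\delta$ in the comparison. No trajectory control on $X_{k_N}$ and no second-class particles are needed; second-class-particle estimates in the paper are developed and used only for the \emph{upper} bound (Section~\ref{secovertake}).
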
 To prove Theorem \ref{lower_bound_thm2}, we need to know that $ \mathcal{L}(\xi^N_{\g}) 
    $ does not overtake $
    X_{k_N}(\g) $  by an asymptotically relevant amount.  This is the content of the following Theorem. 
\begin{tthm}\label{lower_bound_thm1}
Let $ \delta>0$ and $ \eta^\infty$ be an extension of  $\xi^N$ as in Theorem \ref{lower_bound_thm2}.  Then 
\[
  \lim_{N\to\infty}
  \mathbb{P}\!\left(
    \mathcal{L}(\xi^N_{\g}) 
    \le 
    X_{k_N}(\g) + N^{\delta}
  \right) = 1.
\]
\end{tthm}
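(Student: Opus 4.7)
My plan is to proceed by a coupling argument between the segment ASEP $\xi^N_t$ on $[1;b_N]$ and the infinite ASEP $\eta^\infty_t$ on $\Z$. Both processes are run on the same probability space using the basic coupling: they share the Poisson clocks on all edges, with the segment dynamics ignoring those on exterior edges. At time zero, both processes place a particle at $\mathcal{L}(\xi^N)$, which serves simultaneously as $\mathcal{L}(\xi^N_0)$ for the segment and as the tagged particle $X_{k_N}(0)$ for $\eta^\infty$. The goal is to prevent $\mathcal{L}(\xi^N_g)$ from lying more than $N^\delta$ to the right of $X_{k_N}(g)$ with high probability.

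To quantify the discrepancy between these trajectories, I would introduce the intermediate process $\tilde{\xi}^N_t$ on $\Z$ started from $\xi^N$ extended by zeros outside $[1;b_N]$, also coupled basically. Since $\tilde{\xi}^N_0 \le \eta^\infty_0$ pointwise, the extras in $\eta^\infty \setminus \tilde{\xi}^N$ form second-class particles in the associated $\zeta$-coupling, and $X_{k_N}(t)$ in $\eta^\infty$'s natural labeling differs from the leftmost first-class particle in the $\zeta$-picture only through overtaking events—each such overtaking relabels the tagged particle by one and accounts for an $O(1)$ contribution to the position gap.

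The key quantitative input is then the bound from Section~\ref{secovertake} on the number of overtakings of $X_{k_N}$ by second-class particles up to time $g$, which is $O(N^\delta)$ w.h.p. Together with attractivity (giving the matching lower bound $\mathcal{L}(\xi^N_t)\ge X_{k_N}(t)$ in regimes where it applies) and a parallel control of the boundary-induced discrepancy between the segment leftmost and the natural leftmost of $\tilde{\xi}^N_t$—also of order $N^\delta$ once boundary-crossing currents at position $1$ are controlled—the result $\mathcal{L}(\xi^N_g)\le X_{k_N}(g)+N^\delta$ follows.

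The hardest part will be establishing the overtaking bound for a general $\xi^N$. As the authors explicitly note, for a generic extension $\eta^\infty$ the simple attractivity argument of \cite{BN22} (valid for $\xi^N=\mathbf{1}_{[1;k_N]}$) breaks down, since adding particles in $\eta^\infty$ does not automatically push $\xi^N$'s particles to the right. A direct stochastic analysis of second-class particle currents, independent of monotonicity, will therefore be required, and this is precisely what Section~\ref{secovertake} is designed to deliver.
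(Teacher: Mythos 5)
Your overall strategy is not the one the paper uses, and as written it has genuine gaps at exactly the quantitative steps that matter. The paper's proof of Theorem \ref{lower_bound_thm1} never touches second class particles or Section \ref{secovertake}: it builds a chain of configurations $\eta^{\infty,1},\dots,\eta^{\infty,4}$ (Propositions \ref{prop:eta1_xi_N}--\ref{prop:eta3_eta4}), where the $N^{\delta}$ slack is created by an explicit \emph{spatial} insertion of $N^\delta$ holes and a shift of $\eta^{\infty}$ by $N^{\delta}$, and the comparisons are carried out by attractivity with respect to $\preceq$ plus the reversed-step block estimate of Proposition \ref{block}. The overtaking bounds you want to invoke are used in this paper only for the upper bound (Theorem \ref{theorem5}), where the stronger hypothesis \eqref{particles} at shifted labels $j=\pm N^{\delta}$ is available.

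Concretely, three steps of your sketch do not go through as stated. First, the assertion that ``each overtaking relabels the tagged particle by one and accounts for an $O(1)$ contribution to the position gap'' is unsubstantiated: the overtaking count controls which class occupies which $\eta^{\infty}$-trajectory (a label/class bookkeeping), not the distance $\mathcal{L}(\xi^N_{\g})-X_{k_N}(\g)$. When the leftmost first-class particle sits to the right of the tagged $\eta^{\infty}$-particle, the sites between them may be holes as well as second-class particles, so bounding the number of overtakes does not by itself bound the positional gap; you would need an additional argument to convert this into the distance bound the theorem asserts about $X_{k_N}$ itself. Second, Theorems \ref{61}, \ref{62} and Corollary \ref{cor62} require all discrepancies to start on one side of the first-class particles ($\max A_2<\min A_1$, resp.\ $\max A_0<\min A_2$); here the extras of $\eta^{\infty}$ sit on \emph{both} sides of $[1;b_N]$. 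The extras to the right of $b_N$ threaten the inequality not through overtaking but by blocking and thus slowing $X_{k_N}$ in $\eta^{\infty}$ while leaving $\xi^N_t$ untouched; second-class-particle counts say nothing about this, whereas the paper disposes of it by pushing those extras $N^{\delta}$ to the right (configuration $\eta^{\infty,1}$) and using Proposition \ref{block} to show they never reach $[1;b_N]$ before time $\g$. Third, the ``boundary-induced discrepancy at position $1$'' that you defer is precisely where the $N^{\delta}$ allowance is consumed: the segment's left boundary can only push $\mathcal{L}(\xi^N_t)$ to the right of its free counterpart, and controlling this requires an actual estimate — in the paper, the shift through $\eta^{\infty,2},\eta^{\infty,3},\eta^{\infty,4}$, attractivity, and again Proposition \ref{block} to rule out the padded configuration's leftmost particle reaching site $1$. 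Without these three ingredients your argument does not yield $\mathcal{L}(\xi^N_{\g})\le X_{k_N}(\g)+N^{\delta}$; I would suggest abandoning the second-class-particle route here and following the shift-and-compare scheme, reserving the overtaking bounds for the upper bound where they are actually needed.
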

Given Theorem \ref{lower_bound_thm1}, we can prove Theorem \ref{lower_bound_thm2}:
\begin{proof}[Proof of Theorem  \ref{lower_bound_thm2} assuming Theorem \ref{lower_bound_thm1}]
For \(l \ge 0\), define
\[
A_{b_{N}}(l)
  = \Bigl\{
    v \in \{0,1\}^{[1,b_N]} :
    \mathcal{L}(v) < b_N - k_N - l
  \Bigr\}.
\]
By Proposition~4.2 of \cite{BN22}, there exist constants 
\(C_1, C_2 > 0\) such that
\[
  \pi_{b_N,k_N}\bigl(A_{b_N}(l)\bigr)
  \le C_{1} e^{-C_{2} l}.
\]
Hence,
\begin{align}
\lim_{N\to\infty} d_{k_N}\!\big(\g\big)
&\ge
\lim_{N\to\infty}
\Bigl[
  \mathbb{P} \Big(
    \xi^{N}_{\g} 
    \in A_{b_{N}}(N^{\delta})
  \Big)
  - 
  \pi_{b_N,k_N}\bigl(A_{b_N}(N^{\delta})\bigr)
\Bigr] \label{eq:thm2_1}\\[6pt]
&=
\lim_{N\to\infty}
\Bigl[
  \mathbb{P}\Big(
    \mathcal{L}\!\big(\xi^{N}_{\g}\big)
    \le b_N - k_N - N^{\delta}
  \Big)
  - 
  \pi_{b_N,k_N}\bigl(A_{b_N}(N^{\delta})\bigr)
\Bigr] \label{eq:thm2_2}\\[6pt]
&\ge
\lim_{N\to\infty}
\mathbb{P}\Big(
  \mathcal{L}\big(\xi^{N}_{\g}\big)
  \le b_N - k_N - N^{\delta} 
\Big)
- C_{1} e^{-C_{2} N^{\delta}} \label{eq:thm2_3}\\[6pt]
&\ge
\lim_{N\to\infty}
\mathbb{P} \Big(
  X_{k_N}\big(\g\big)
  \le b_N - k_N - 2N^{\delta}
\Big)
\quad\text{(by Theorem~\ref{lower_bound_thm1})} \label{eq:thm2_4}\\[6pt]
&= 1 - F(c)\quad\text{(by assumption)} \label{eq:thm2_5}
\end{align}
\end{proof}


\subsection{Auxiliary Constructions}

To compare the dynamics of the finite system $\xi^N$ with those of its infinite
extension $\eta^\infty$, we construct a sequence of intermediate configurations
\[
  \eta^{\infty,1},\quad 
  \eta^{\infty,2},\quad 
  \eta^{\infty,3},\quad 
  \eta^{\infty,4}
\]
each obtained from the previous one by a localized modification and/or removing a boundary of the configuration. See Figure 1 for an illustration. 
All processes evolve under the \emph{basic coupling}. Let us start by defining $\eta^{\infty,1}$.

\begin{prop}\label{prop:eta1_xi_N}
Let $\eta^{\infty}$ be the extension from Theorem \ref{lower_bound_thm2}.
Define an extension to positive integers $\eta^{\infty,1}\in \{0,1\}^{\Z_{\geq 1}}$ of \(\xi^N\) by
\[
\eta^{\infty,1}(i) :=
\begin{cases}
  \xi^N(i), & i \in [1,b_N],\\[3pt]
  0, & i \in (b_N,\, b_N + N^{\delta}],\\[3pt]
  \eta^\infty(i - N^{\delta}), & i > b_N + N^{\delta}.
\end{cases}
\]
Then
\[
  \lim_{N\to \infty}\mathbb{P}\!\left(
    \mathcal{L}\big(\eta^{\infty, 1}_{\,g(\xi^{N},c)}\big)
    \;\ge\;
    \mathcal{L}\big(\xi^{N}_{\,g(\xi^{N},c)}\big)
  \right) = 1.
\]
\end{prop}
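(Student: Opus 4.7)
The plan is to work under the basic coupling of $\xi^N_t$ and $\eta^{\infty,1}_t$, sharing Poisson clocks on all common edges, and to reduce the claim to a labeled-particle comparison. Since ASEP preserves the relative order of particles on a one-dimensional lattice, the extension particles of $\eta^{\infty,1}$ (initially in $(b_N+N^{\delta}, \infty)$) remain strictly to the right of the original $k_N$ particles throughout the evolution. Consequently,
\[
\mathcal{L}(\eta^{\infty,1}_t) = X_{k_N}^{\eta^{\infty,1}}(t), \qquad \mathcal{L}(\xi^N_t) = X_{k_N}^{\xi^N}(t),
\]
where $X_{k_N}$ denotes the originally leftmost labeled particle, and the claim reduces to $X_{k_N}^{\eta^{\infty,1}}(t) \geq X_{k_N}^{\xi^N}(t)$ with high probability at time $t=g(\xi^N,c)$.

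The strategy for this reduced comparison is a labeled-particle monotonicity argument on a favorable event. Specifically, I would consider an event $\mathcal{F}_N$ which encapsulates that the extension particles do not interfere too strongly with the $[1,b_N]$-dynamics of $\eta^{\infty,1}$, for instance that no extension particle travels too far left into a critical region; $\Pb(\mathcal{F}_N) \to 1$ would follow from the bounds on leftward motion of particles in ASEP established in Section \ref{secovertake}. On $\mathcal{F}_N$, the original particles in $\eta^{\infty,1}$ effectively see an open right boundary at $b_N$ (particles may leak into the buffer but none return), and by attractivity together with an inductive argument over the basic-coupling events, each labeled original particle ends up weakly to the right of its counterpart in $\xi^N$. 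The base case is $\mathcal{P}_1$ (rightmost original), which in $\eta^{\infty,1}$ is not penned in by the wall at $b_N$ and can thus drift strictly further right, and the inequality propagates through $\mathcal{P}_2, \ldots, \mathcal{P}_{k_N}$ because each particle's rightward motion is enhanced whenever its immediate right-neighbor labeled particle sits further right.

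The main obstacle I anticipate is the labeled-particle monotonicity on $\mathcal{F}_N$. Although the intuition (open boundary plus attractivity, with the effect cascading inward through the ordered chain) is clear, formalizing the induction requires careful tracking of how the differing boundary conditions at $b_N$ (open versus reflecting), together with the possible interference from extension particles, propagate leftward through the basic-coupling events and affect each labeled position in the correct direction. A clean implementation is likely via a second-class particle coupling between $\xi^N$ and the $[1,b_N]$-restriction of $\eta^{\infty,1}$, with the second-class particles encoding the cumulative net flux across the boundary at $b_N$, so that the monotonicity of labeled positions on the favorable event can be read off directly from the coordinate-wise ordering preserved by the coupled dynamics.
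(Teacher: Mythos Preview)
Your approach is essentially the paper's: define a high-probability event on which the extension particles do not interfere with the segment $[1,b_N]$, and on that event compare the labeled particles of $\eta^{\infty,1}$ and $\xi^N$. Two points are worth sharpening.

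First, the paper makes the favourable event completely explicit by a reversed-step comparison: with $X^*_0=\min\{i\ge b_N+N^\delta:\eta^{\infty,1}(i)=1\}$, one has $X^*_0(s)\ge \mathcal{L}(\eta^{-\mathrm{step}(X^*_0)}_s)$ under the basic coupling, and Proposition~\ref{block} then gives $\inf_{0\le s\le g(\xi^N,c)}\mathcal{L}(\eta^{-\mathrm{step}(X^*_0)}_s)>b_N$ with probability $1-C_1 e^{-C_2 (g(\xi^N,c))^{\delta/2}}$. This is the concrete content behind your $\mathcal{F}_N$; your reference to Section~\ref{secovertake} is correct but you should name Proposition~\ref{block} rather than the overtaking theorems, which address a different quantity.

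Second, on that event the labeled-particle comparison is a deterministic inequality, not merely high probability, and it needs no second-class machinery. The direct induction you sketch already suffices: if $X_i^{\eta^{\infty,1}}(\tau^-)\ge X_i^{\xi^N}(\tau^-)$ for all $i$, then at equality a right jump of $X_i^{\xi^N}$ forces the same jump in $\eta^{\infty,1}$ (the site to the right is vacant because $X_{i-1}^{\eta^{\infty,1}}\ge X_{i-1}^{\xi^N}$ and extension particles are beyond $b_N$), while a left jump of $X_i^{\eta^{\infty,1}}$ forces the same in $\xi^N$ (the site to the left is vacant because $X_{i+1}^{\xi^N}\le X_{i+1}^{\eta^{\infty,1}}$). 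This is precisely what the paper compresses into the phrase ``no slow-down of particles initially in $[1;b_N]$ inside $[1;b_N]$''. Your proposed second-class-particle implementation would work but is more than is needed here.
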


\begin{proof}

If \(\eta^{\infty,1}\) has no particle to the right of \(b_N\), then on \([1,b_N]\) it
coincides with \(\xi^N\) except that the right boundary is removed and the statement follows. Otherwise set
\[
  X^{*}_{0} := \min\{i \geq b_N + N^{\delta} : \eta^{\infty,1}(i) = 1\},
\]
and consider the reversed step initial data \(\eta^{-\mathrm{step}(X^{*}_{0})}=\mathbf{1}_{\Z_{\geq X^{*}(0)}}\).

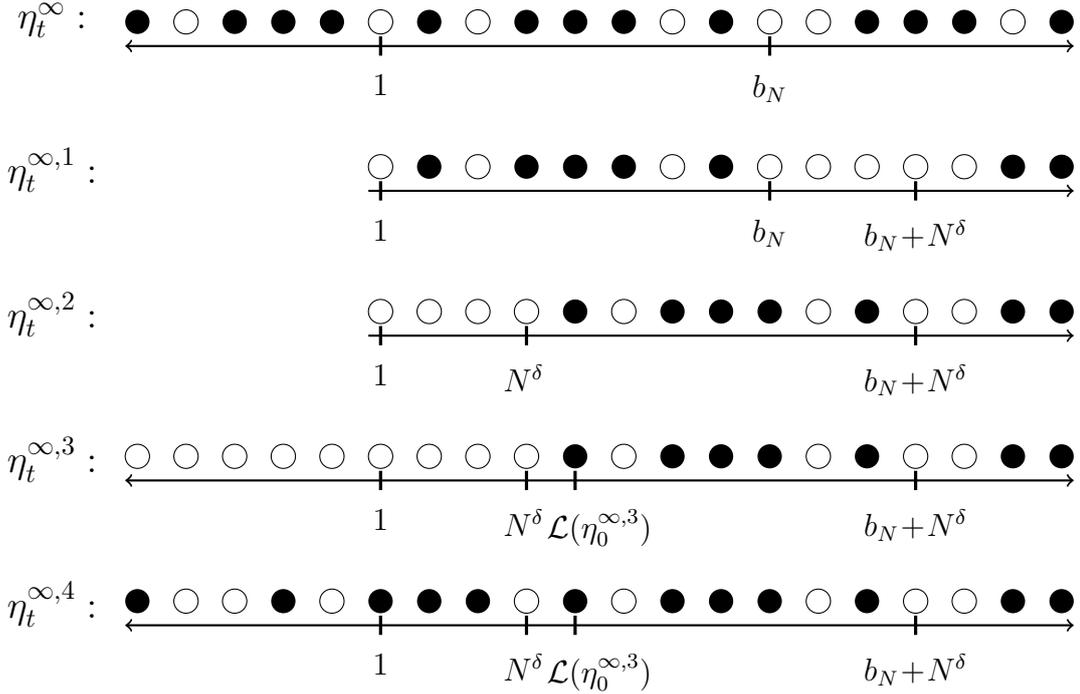
\begin{figure}[H]\begin{center}\label{figure1}
\begin{tikzpicture}[scale=1.6]

\draw[thick,<->] (-2.1,0) -- (5.7,0);
\node[below] at (-2.7,0.45) {\large$\eta^{\infty}_t:$};

\draw[very thick] (0.0,0.08)--(0.0,-0.08);
\draw[very thick] (3.2,0.08)--(3.2,-0.08);
\node[below] at (0.0,-0.15) {$1$};
\node[below] at (3.2,-0.15) {$b_N$};

\foreach \x in {-1.6}
  \draw (\x,0.2) circle (0.1);
\foreach \x in {-1.2,-0.8,-0.4, -2.0}
  \fill (\x,0.2) circle (0.1);

\foreach \x in {0.0,0.8,2.4,3.2}
  \draw (\x,0.2) circle (0.1);
\foreach \x in {0.4,1.2,1.6,2.0,2.8}
  \fill (\x,0.2) circle (0.1);

\foreach \x in {3.6, 5.2}
  \draw (\x,0.2) circle (0.1);
\foreach \x in {4.0,4.4,4.8,5.6}
  \fill (\x,0.2) circle (0.1);

\begin{scope}[yshift=-1.2cm]
\draw[thick,->] (-0.1,0) -- (5.7,0);
\node[below] at (-2.7,0.45) {\large$\eta^{\infty,1}_t:$};

\draw[very thick] (0.0,0.08)--(0.0,-0.08);
\draw[very thick] (3.2,0.08)--(3.2,-0.08);
\draw[very thick] (4.4,0.08)--(4.4,-0.08);
\node[below] at (0.0,-0.15) {$1$};
\node[below] at (3.2,-0.15) {$b_N$};
\node[below] at (4.4,-0.15) {$b_N\!+\!N^\delta$};

\foreach \x in {0.0,0.8,2.4,3.2}
  \draw (\x,0.2) circle (0.1);
\foreach \x in {0.4,1.2,1.6,2.0,2.8}
  \fill (\x,0.2) circle (0.1);

\foreach \x in {3.6,4.0, 4.4}
  \draw (\x,0.2) circle (0.1);

\foreach \x in {4.8}
  \draw (\x,0.2) circle (0.1);
\foreach \x in {5.2, 5.6}
  \fill (\x,0.2) circle (0.1);
\end{scope}

\begin{scope}[yshift=-2.4cm]
\draw[thick,->] (-0.1,0) -- (5.7,0);
\node[below] at (-2.7,0.45) {\large$\eta^{\infty,2}_t:$};

\draw[very thick] (0.0,0.08)--(0.0,-0.08);
\draw[very thick] (1.2,0.08)--(1.2,-0.08);
\draw[very thick] (4.4,0.08)--(4.4,-0.08);
\node[below] at (0.0,-0.15) {$1$};
\node[below] at (1.2,-0.15) {$\!N^\delta$};
\node[below] at (4.4,-0.15) {$b_N\!+\!N^\delta$};

\foreach \x in {0.0,0.4,0.8}
  \draw (\x,0.2) circle (0.1);

\foreach \x in {0.0,0.8,2.4,3.2}
  \draw (\x+1.2,0.2) circle (0.1);
\foreach \x in {0.4,1.2,1.6,2.0,2.8}
  \fill (\x+1.2,0.2) circle (0.1);

\foreach \x in {4.4}
  \draw (\x+0.4,0.2) circle (0.1);
\foreach \x in {4.8,5.2}
  \fill (\x+0.4,0.2) circle (0.1);
\end{scope}

\begin{scope}[yshift=-3.6cm]
\draw[thick,<->] (-2.1,0) -- (5.7,0);
\node[below] at (-2.7,0.45) {\large$\eta^{\infty,3}_t:$};

\draw[very thick] (0.0,0.08)--(0.0,-0.08);
\draw[very thick] (1.2,0.08)--(1.2,-0.08);
\draw[very thick] (1.6,0.08)--(1.6,-0.08);

\draw[very thick] (4.4,0.08)--(4.4,-0.08);
\node[below] at (0.0,-0.15) {$1$};
\node[below] at (1.2,-0.15) {$\!N^\delta$};
\node[below] at (1.8,-0.15) {$ \LL(\eta^{\infty,3}_0)$};
\node[below] at (4.4,-0.15) {$b_N\!+\!N^\delta$};

\foreach \x in {-2.0,-1.6,-1.2,-0.8,-0.4}
  \draw (\x,0.2) circle (0.1);

\foreach \x in {0.0,0.4, 0.8}
  \draw (\x,0.2) circle (0.1);

\foreach \x in {0.8,1.6,3.2}
  \draw (\x+0.4,0.2) circle (0.1);
\foreach \x in {1.2,2.0,2.4,2.8,3.6}
  \fill (\x+0.4,0.2) circle (0.1);

\foreach \x in {4.4, 4.8}
  \draw (\x,0.2) circle (0.1);
\foreach \x in {5.2, 5.6}
  \fill (\x,0.2) circle (0.1);
\end{scope}

\begin{scope}[yshift=-4.8cm]
\draw[thick,<->] (-2.1,0) -- (5.7,0);
\node[below] at (-2.7,0.45) {\large$\eta^{\infty,4}_t:$};
\node[below] at (1.8,-0.15) {$ \LL(\eta^{\infty,3}_0)$};
\draw[very thick] (0.0,0.08)--(0.0,-0.08);
\draw[very thick] (1.2,0.08)--(1.2,-0.08);
\draw[very thick] (4.4,0.08)--(4.4,-0.08);
\node[below] at (0.0,-0.15) {$1$};
\draw[very thick] (1.6,0.08)--(1.6,-0.08);
\node[below] at (1.2,-0.15) {$\!N^\delta$};
\node[below] at (4.4,-0.15) {$b_N\!+\!N^\delta$};

\foreach \x in {-2.8,-2.4,-1.6}
  \draw (\x+1.2,0.2) circle (0.1);
\foreach \x in {-3.2,-2.0,-1.2,-0.8,-0.4}
  \fill (\x+1.2,0.2) circle (0.1);

\foreach \x in {0.0,0.8,2.4,3.2}
  \draw (\x+1.2,0.2) circle (0.1);
\foreach \x in {0.4,1.2,1.6,2.0,2.8}
  \fill (\x+1.2,0.2) circle (0.1);

\foreach \x in {3.6}
  \draw (\x+1.2,0.2) circle (0.1);
\foreach \x in {4.0,4.4}
  \fill (\x+1.2,0.2) circle (0.1);
\end{scope}

\end{tikzpicture}\end{center}

\caption{
From top to bottom:   $\eta^{\infty,1}_t$ is obtained from $\eta^{\infty}_t$ by considering only the configuration to the right of $1$ and by inserting  $N^{\delta}$ holes to the right of $b_N$. In  $\eta^{\infty,2}_t$ these inserted holes are moved   to the right of $1$. In  $\eta^{\infty,3}_t$ we extend the configuration to $\Z$ by filling all non-positive integers with holes,   and in  $\eta^{\infty,4}_t$ reintroduce the particles that were to to the left of $1$  in  $\eta^{\infty,1}$, so that $\eta^{\infty,4}$ is just $\eta^{\infty}$ shifted by $N^{\delta}$. 
By comparing $\eta^{\infty,i}$ with  $\eta^{\infty,i+1}$
we show that  under the basic coupling $ 
\Pb(\LL(\xi^{N}(\g)\leq\LL(\eta^{\infty,4}_\g)),$ converges to $1$.  Note that when going from $\eta^{\infty,3}$ to $\eta^{\infty,4}$  we add particles, so the particle initially at $ \LL(\eta^{\infty,3}_0)$  in $\eta^{\infty,4}$  is never to the left of  the leftmost particle of $ \eta^{\infty,3}.$ 
This provides the comparison needed in the proof of Theorem~\ref{lower_bound_thm1}. 
} 
\label{fig:five-eta-configs}
\end{figure}

Since \(\eta^{-\mathrm{step}(X^{*}_{0})}\) fills every site to the right of \(X^{*}_{0}\) while
\(\eta^{\infty,1}\) may have holes there, we have
\[
X^{*}_{0}(t)\ \ge\ \mathcal{L}\big(\eta^{-\mathrm{step}(X^{*}_{0})}_{t}\big).
\]
For \(p>q\), the reversed step has rightward drift with exponential left-tail bounds
for its leftmost particle; thus there exist \(C_1,C_2>0\) such that, for all large \(N\) (see Proposition~\ref{block}),
\[
  \mathbb{P}\!\Big(
    \inf_{0\le s \leq g(\xi^N,c)}
    \mathcal{L}\big(\eta^{-\mathrm{step}(X^{*}_{0})}_s\big)
    \geq b_N + N^{\delta} - (g(\xi^N,c))^{\delta/2}
  \Big) \ge 1- C_1 e^{-C_2 (g(\xi^N,c))^{\delta/2}}.
\]
Since \(b_N + N^{\delta} - (g(\xi^N,c))^{\delta/2} > b_{N} + N^{\delta/2}\) for \(N\) large,
\[
  \mathbb{P}\!\Big(
    \inf_{0\le s \leq g(\xi^N,c)}
    \mathcal{L}\big(\eta^{-\mathrm{step}(X^{*}_{0})}_s\big)
    \geq  b_{N} + N^{\delta/2} \Big) \ge 1- C_1 e^{-C_2 (g(\xi^N,c))^{\delta/2}}.
\]
On this event, particles initially to the right of \(b_N\) in \(\eta^{\infty,1}\) never reach \([1,b_N]\) up to time \(g(\xi^N,c)\),  so that there is no slow-down of particles initially in $[1;b_N]$ inside $[1;b_N]$, yielding the result.  

\end{proof}

Next we come to $\eta^{\infty,2}$ (see again Figure 1). 

\begin{prop}\label{prop:eta1_eta2}
Define a configuration  $ \eta^{\infty,2}\in \{0,1\}^{\Z_{\geq 1}}$  via 
\[
  \eta^{\infty,2}(i) :=
  \begin{cases}
    0, & i \in [1, N^{\delta}],\\[3pt]
    \eta^\infty(i - N^{\delta}), & i > N^{\delta}.
  \end{cases}
\]
Then under the basic coupling
\[
  \mathcal{L}(\eta^{\infty,2}_{g(\xi^N,c)})\ \ge\ \mathcal{L}(\eta^{\infty,1}_{g(\xi^N,c)}).
\]
\end{prop}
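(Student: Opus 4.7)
The plan is to reduce the statement to the attractivity of ASEP under the basic coupling. Specifically, I would show that the initial configurations satisfy $\eta^{\infty,1}_{0}\preceq\eta^{\infty,2}_{0}$ in the partial order defined before Proposition~\ref{censor} (extending both configurations by holes on $\mathbb{Z}_{\le 0}$, which affects neither the cumulative counts used to define $\preceq$ nor $\mathcal{L}$), and then propagate the order to time $g(\xi^{N},c)$ using attractivity.

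To verify the initial domination, I would compare the cumulative particle counts $F_{j}(r):=\sum_{i=1}^{r}\eta^{\infty,j}(i)$ region by region. The intuition is that $\eta^{\infty,2}$ is obtained from $\eta^{\infty,1}$ by shifting the $\xi^{N}$ block rightwards by $N^{\delta}$, swapping it with the inserted block of $N^{\delta}$ holes. Concretely:
\begin{itemize}
\item On $[1,N^{\delta}]$: $F_{2}(r)=0\le F_{1}(r)$.
\item On $(N^{\delta},b_{N}]$: $F_{2}(r)=\sum_{j=1}^{r-N^{\delta}}\xi^{N}(j)\le\sum_{j=1}^{r}\xi^{N}(j)=F_{1}(r)$, since the extra terms are nonnegative.
\item On $(b_{N},b_{N}+N^{\delta}]$: $F_{1}(r)=k_{N}$, the maximum possible value, so $F_{2}(r)\le F_{1}(r)$ trivially.
\item On $(b_{N}+N^{\delta},\infty)$: both sums equal $k_{N}+\sum_{j=b_{N}+1}^{r-N^{\delta}}\eta^{\infty}(j)$, hence $F_{1}=F_{2}$.
\end{itemize}
This gives $F_{2}(r)\le F_{1}(r)$ for all $r$, i.e.\ $\eta^{\infty,1}_{0}\preceq\eta^{\infty,2}_{0}$.

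Once the initial order is established, attractivity of ASEP (as recalled in the Censoring Inequality subsection) gives $\eta^{\infty,1}_{t}\preceq\eta^{\infty,2}_{t}$ for every $t\ge 0$ under the basic coupling. Using the implication $\mu\preceq\eta\Rightarrow\mathcal{L}(\mu)\le\mathcal{L}(\eta)$ (also stated there) at $t=g(\xi^{N},c)$ delivers the desired inequality.

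There is no substantial obstacle here: the argument is purely deterministic, requires no probabilistic estimates of the type used in Proposition~\ref{prop:eta1_xi_N}, and reduces to elementary bookkeeping at the interfaces of the four regions defining $\eta^{\infty,1}$ and $\eta^{\infty,2}$. The only mild care needed is to track which segment of $\xi^{N}$ is counted by $F_{2}$ on $(N^{\delta},b_{N}]$, namely the initial segment up to index $r-N^{\delta}$, and to note that adding $N^{\delta}$ holes cannot move $\mathcal{L}$ to the left.
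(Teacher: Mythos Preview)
Your proposal is correct and follows exactly the paper's approach: the paper's proof is the single line ``This follows from $\eta^{\infty,2}\succeq\eta^{\infty,1}$,'' and you have simply unpacked the verification of the initial domination and the appeal to attractivity that the paper leaves implicit.
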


\begin{proof}
This follows from $\eta^{\infty,2} \succeq \eta^{\infty,1}$. 
\end{proof}

Finally, we extend $\eta^{\infty,2}$ to $\Z$ by inserting holes everywhere to the left of $1$. 
\begin{prop}\label{prop:eta2_eta3}
Define an extension of \(\eta^{\infty,2}\) to $\Z$  by
\[
  \eta^{\infty,3}(i) :=
  \begin{cases}
    0, & i \leq  0,\\[3pt]
    \eta^{\infty,2}(i), & i \ge 1.
  \end{cases}
\]
Then
\[
  \lim_{N\to\infty}\mathbb{P}\!\left(\,
    \mathcal{L}(\eta^{\infty,3}_{\,g(\xi^{N},c)})
    \ \ge\
    \mathcal{L}(\eta^{\infty,2}_{\,g(\xi^{N},c)})\right) = 1.
\]
\end{prop}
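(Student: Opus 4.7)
My plan is to prove the stronger claim that, with probability tending to one, no particle of $\eta^{\infty,3}$ ever occupies a non-positive site before time $g(\xi^{N},c)$. On that event, the basic coupling forces the restrictions $\eta^{\infty,3}|_{[1,\infty)}$ and $\eta^{\infty,2}$ to evolve identically up to $g(\xi^{N},c)$: Poisson rings at edges $(i,i+1)$ with $i\ge 1$ act the same way on both processes, while the extra edge $(0,1)$ present only in $\eta^{\infty,3}$ is prevented from triggering any transition (either both sites are holes, or a left-jump from $1$ to $0$ would occur, which is exactly what the assumed event excludes). In particular, on this event $\LL(\eta^{\infty,3}_{g(\xi^{N},c)}) = \LL(\eta^{\infty,2}_{g(\xi^{N},c)})$, which is strictly stronger than the claimed inequality.

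To control the leftmost particle of $\eta^{\infty,3}$, I set $X^{*}(0) := \LL(\eta^{\infty,3}_0) = \LL(\eta^{\infty,2}_0)$; the initial hole-block on $[1,N^{\delta}]$ forces $X^{*}(0)\ge N^{\delta}+1$ (the case $X^{*}(0)=+\infty$ is trivial). Since $\eta^{-\mathrm{step}(X^{*}(0))}_0 = \one_{\Z_{\ge X^{*}(0)}}$ puts a particle at every site of $[X^{*}(0),\infty)$ while $\eta^{\infty,3}_0$ may have holes there, the partial sums obey $\sum_{i\le r} \eta^{\infty,3}_0(i) \le \sum_{i\le r} \eta^{-\mathrm{step}(X^{*}(0))}_0(i)$ for every $r\in\Z$, i.e.\ $\eta^{-\mathrm{step}(X^{*}(0))}_0 \preceq \eta^{\infty,3}_0$. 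Attractivity of ASEP under the basic coupling then yields
\[
\LL(\eta^{\infty,3}_s)\ \ge\ \LL\bigl(\eta^{-\mathrm{step}(X^{*}(0))}_s\bigr)\quad\text{for every } s\ge 0.
\]

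At this point I invoke Proposition~\ref{block} exactly as in the proof of Proposition~\ref{prop:eta1_xi_N}, but with starting site $X^{*}(0)$ in place of $b_N + N^{\delta}$ (permitted by translation invariance of the reversed-step process), to obtain
\[
\Pb\!\left(\inf_{0\le s\le g(\xi^{N},c)}\LL\bigl(\eta^{-\mathrm{step}(X^{*}(0))}_s\bigr)\ \ge\ X^{*}(0)-(g(\xi^{N},c))^{\delta/2}\right)\ \ge\ 1-C_1 e^{-C_2(g(\xi^{N},c))^{\delta/2}}.
\]
Because $X^{*}(0)\ge N^{\delta}+1$ and $(g(\xi^{N},c))^{\delta/2} = O(N^{\delta/2}) = o(N^{\delta})$, the deterministic lower bound inside the probability exceeds $1$ for all large $N$, so with probability tending to one $\inf_{0\le s\le g(\xi^{N},c)}\LL(\eta^{\infty,3}_s)\ge 1$. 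Feeding this back into the reduction of the first paragraph completes the proof.

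The most delicate point is the identification of the two dynamics on $[1,\infty)$ on the no-crossing event. Although it is intuitively clear, it requires a stopping-time induction along the Poisson rings of the basic coupling and a careful case analysis at edge $(0,1)$. The key quantitative ingredient is that Proposition~\ref{block} only allows an $O(N^{\delta/2})$ left-deviation for the reversed step, which is precisely the margin the $N^{\delta}$-hole buffer inserted by $\eta^{\infty,2}$ on $[1,N^{\delta}]$ converts into the safety estimate above.
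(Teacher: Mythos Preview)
Your proof is correct and follows essentially the same approach as the paper's: both argue that with probability tending to one the leftmost particle of $\eta^{\infty,3}$ never reaches the non-positive integers (so the two processes coincide on $[1,\infty)$), by dominating from below with a reversed-step initial data and invoking Proposition~\ref{block}. The only cosmetic difference is that you start the reversed step at $X^{*}(0)=\LL(\eta^{\infty,3}_0)$ whereas the paper starts it at $N^{\delta}$; since $X^{*}(0)\ge N^{\delta}+1$, both choices work and yield the same conclusion.
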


\begin{proof}
The only way we could have $\mathcal{L}(\eta^{\infty,3}_{\,g(\xi^{N},c)})
    <
    \mathcal{L}(\eta^{\infty,2}_{\,g(\xi^{N},c)})$ is if $\mathcal{L}(\eta^{\infty,3})$ jumps from $1$ to $0$ at some time point $ s\in [0,\g]$. 
    Since  for all $t$ $\LL( \eta^{-\mathrm{step}(N^{\delta})}_{t}) \leq \LL(\eta^{\infty,3}_{t})$ we see  that the probability of $ \LL(\eta^{\infty,3})$ reaching position $1$ converges to $0$ by Proposition \ref{block}, finishing the proof. 

\end{proof}
Finally, we reinsert all the particles from $\eta^{\infty}$ that were initially to the left of $1$.
\begin{prop}\label{prop:eta3_eta4}
Define the right-shifted configuration
\[
  \eta^{\infty,4}(i) := \eta^\infty(i - N^{\delta}), \qquad i\in\mathbb{Z}.
\]
Set
\(
  \hat{x}^{*}
  := \inf\{\,i> N^{\delta}:\ \eta^{\infty,4}(i)=1\,\},
\)
and let \(\hat{x}^{*}_{t}\) denote the position at time \(t\) of the particle initially at \(\hat{x}^{*}\).
Then under the basic coupling
\[
  \hat{x}^{*}_{\,g(\xi^{N},c)}
  \ \geq\
  \mathcal{L}\big(\eta^{\infty,3}_{\,g(\xi^{N},c)}\big).
\]
\end{prop}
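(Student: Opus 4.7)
The plan is to use the multi-species representation induced by the basic coupling between $\eta^{\infty,3}$ and $\eta^{\infty,4}$ to identify the labeled particle $\hat{x}^*$ as the leftmost first-class particle, whose position coincides with $\LL(\eta^{\infty,3}_t)$.

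First I would observe that $\eta^{\infty,3}\le\eta^{\infty,4}$ pointwise: the two configurations agree on $\{i>N^{\delta}\}$ by construction, while $\eta^{\infty,3}$ vanishes on $(-\infty,N^{\delta}]$. Setting $\mu:=\eta^{\infty,3}$ and $\eta:=\eta^{\infty,4}$ and forming the three-species process $\zeta_t:=2\eta_t-\mu_t$ as in Section~\ref{secgraph}, one sees that the $1$'s mark the shared particles (initially supported in $(N^{\delta},\infty)$) and the $2$'s mark the extras of $\eta^{\infty,4}$ (initially supported in $(-\infty,N^{\delta}]$).

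Two facts recalled in Section~\ref{secgraph} then give the conclusion. First, under the basic coupling the set $\{i:\zeta_t(i)=1\}$ equals the particle set of $\eta^{\infty,3}_t$, so the leftmost $1$ at time $t$ sits precisely at position $\LL(\eta^{\infty,3}_t)$. Second, the update rule never exchanges two $1$'s (the rule $\sigma_{z,z+1}$ acts trivially on two occupied sites), so if we track labeled particles by their initial positions, the relative order of the $1$-particles is preserved for all $t\geq 0$.

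Next I would identify the labeled particle $\hat{x}^*$. Since $\eta^{\infty,3}(\hat{x}^*)=\eta^{\infty,4}(\hat{x}^*)=1$, it starts as a first-class particle and, by the multi-species rules, remains one throughout. Because $\eta^{\infty,3}$ vanishes on $(-\infty,N^{\delta}]$, we further have $\hat{x}^*=\LL(\eta^{\infty,3}_0)$, so $\hat{x}^*$ is initially the leftmost $1$, and by the order-preserving property it remains the leftmost $1$ for all $t\geq 0$. Hence $\hat{x}^*_t=\LL(\eta^{\infty,3}_t)$ for every $t\geq 0$, which is stronger than the asserted inequality. I do not anticipate any substantial obstacle, as the argument is a direct application of the second-class particle construction together with the conservation of relative order among first-class particles in ASEP.
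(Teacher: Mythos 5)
There is a genuine gap in the key identification step. You equate the tagged particle $\hat{x}^{*}_t$ of the $\eta^{\infty,4}$-process with the (leftmost) first-class particle of the discrepancy process $\zeta_t=2\eta^{\infty,4}_t-\eta^{\infty,3}_t$, but these are different objects. In the basic coupling the class labels follow the $\eta^{\infty,3}$-particles: the $1$'s are exactly the sites occupied by both processes, and a $1$--$2$ swap in $\zeta$ corresponds to a jump of an $\eta^{\infty,3}$-particle onto a site occupied by one of the extra particles of $\eta^{\infty,4}$, an event during which \emph{no} $\eta^{\infty,4}$-particle moves. Hence the $\eta^{\infty,4}$-particle initially at $\hat{x}^{*}$ does not "remain the leftmost first-class particle", and your claimed identity $\hat{x}^{*}_t=\mathcal{L}(\eta^{\infty,3}_t)$ is false. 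A two-site example: let $\eta^{\infty,4}$ have particles at $0$ and $1$ and $\eta^{\infty,3}$ only at $1$; if the clock at the edge $(0,1)$ rings and the $Q$-coin succeeds, the $\eta^{\infty,3}$-particle jumps to $0$ while nothing moves in $\eta^{\infty,4}$, so $\mathcal{L}(\eta^{\infty,3}_t)=0<1=\hat{x}^{*}_t$. The equality also cannot be right structurally: the proposition is used with $\hat{x}^{*}_{g(\xi^{N},c)}=X_{k_N}(g(\xi^{N},c))+N^{\delta}$, i.e.\ $\hat{x}^{*}$ is a tagged particle of the shifted $\eta^{\infty}$-process, whose left jumps can be blocked by the extra particles, whereas $\mathcal{L}(\eta^{\infty,3})$ suffers no such blocking --- this one-sidedness is precisely why only an inequality is asserted.

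The correct route (and the paper's) is tagged-particle monotonicity under the basic coupling: match each particle of $\eta^{\infty,3}$ with the $\eta^{\infty,4}$-particle at the same initial site. All discrepancies of $\eta^{\infty,4}$ lie initially to the left of $\hat{x}^{*}=\mathcal{L}(\eta^{\infty,3}_0)$, and adding particles to the left of a tagged particle can only block its left jumps, never its right jumps; so every tagged $\eta^{\infty,4}$-particle stays weakly to the right of its $\eta^{\infty,3}$-partner for all times. Applied to the pair starting at $\hat{x}^{*}$, and using that this $\eta^{\infty,3}$-partner is and remains the leftmost particle of $\eta^{\infty,3}$ (particles do not cross), this yields $\hat{x}^{*}_t\ge\mathcal{L}(\eta^{\infty,3}_t)$. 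Your observations that the leftmost $1$ sits at $\mathcal{L}(\eta^{\infty,3}_t)$ and that $1$'s preserve their mutual order are correct but do not bridge the gap, because they say nothing about how that $1$ compares with the $\eta^{\infty,4}$-particle labelled $\hat{x}^{*}$.
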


\begin{proof}
Note that  $ \hat{x}^{*}_{0}
=
  \mathcal{L}\big(\eta^{\infty,3}_{0})$ and that the difference between $\hat{x}^{*}$ and $  \mathcal{L}\big(\eta^{\infty,3}\big)$ is that 
  $\hat{x}^{*}$ may have additional particles to its left initially. Since this can never slow down $ \hat{x}^{*}$, the claim follows. 

\end{proof}

\subsection{Proof of Theorem~\ref{lower_bound_thm1}}

By successive application of
Propositions~\ref{prop:eta1_xi_N}–\ref{prop:eta3_eta4}, we obtain
\[
 \lim_{N\to\infty}\Pb(   \mathcal{L}\!\big(\xi^N_{g(\xi^N,c)}\big)
  \;\le\;
\hat{x}^{*}_{g(\xi^N,c)})=1.
\]
Since \(\eta^{\infty,4}\) is a spatial shift of \(\eta^\infty\) by \(N^{\delta}\) we have 
\[
  \hat{x}^{*}_{g(\xi^N,c)}
  \;=\;
  X_{k_N}\!\big(g(\xi^N,c)\big) + N^{\delta}.
\]
Therefore,
\[
  \lim_{N\to\infty}
  \mathbb{P}\!\left(
    \mathcal{L}\!\big(\xi^N_{g(\xi^N,c)}\big)
    \le 
    X_{k_N}\!\big(g(\xi^N,c)\big) + N^{\delta}
  \right) = 1,
\]
which completes the proof. \qed

\section{Bounds on overtaking particles}\label{secovertake}
Here we provide bounds on the number of holes resp. second class particles which overtake a second class particle resp. first class particle initially to  their right. 
For a single second class particle, Theorem 1.6 of \cite{DH25}  provides precise bounds using a different method. 
We thank Dominik Schmid for explaining and discussing the present method, based on censoring and originating in  \cite{BBHM}, with us.

Intuitivly, it is very natural that a first class particle will not be overtaken by too many second class particles which are initially to its left.
Likewise,  a second class particle should not be overtaken by too many holes which are initially to its left.  
The base case, where a coupling argument suffices to show this, is  reversed step initial data: 
Recall we defined for $Z\in \Z$  the ASEP  $ ( \eta^{-\mathrm{step}(Z)}_{\ell}, \ell\geq 0) $ started from the reversed step initial data
\begin{equation*}
\eta^{-\mathrm{step}(Z)}(j)=\mathbf{1}_{\Z_{\geq Z}}(j),
\end{equation*}
for $Z=0$ we simply write $\eta^{-\mathrm{step}}$.

For reversed step initial data, we have the following bound  from \cite{N20AAP}.
\begin{prop}[Proposition 3.1 in \cite{N20AAP}]\label{block}
Consider  ASEP with reversed step  initial data  $\eta^{-\mathrm{step}(Z)} , $ let $\delta>0$ and denote by $\mathcal{L}(\eta^{-\mathrm{step}(Z)}_{s})$ the position of the leftmost particle of $\eta^{-\mathrm{step}(Z)}_{s}$. Then  there is a $t_0$ such that for $t>t_{0},R\in \Z_{\geq 1}$ and constants $C_{1},C_{2}>0$ (which depend on $p$) we have
\begin{align}\label{bms2}
&\Pb\left(  \mathcal{L}(\eta^{-\mathrm{step}(Z)}_{t})<Z-R\right)\leq C_{1}e^{-C_{2}R}
\\&\label{bms}
\Pb\left( \inf_{0\leq \ell \leq t} \mathcal{L}(\eta^{-\mathrm{step}(Z)}_{\ell })<Z-t^{\delta}\right)\leq C_{1}e^{-C_{2}t^{\delta}}.
\end{align}
\end{prop}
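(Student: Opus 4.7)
The plan is to stochastically minorize the leftmost particle of $\eta^{-\mathrm{step}(Z)}$ by the position of a single free particle, and then reduce both bounds to classical large-deviation estimates for a biased random walk.

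First, I would introduce the auxiliary initial configuration $\bar\eta \in \{0,1\}^{\Z}$ with $\bar\eta(i) = \one\{i = Z\}$, i.e.\ a single particle at site $Z$ and holes everywhere else. Since $\eta^{-\mathrm{step}(Z)}$ places particles at every site of $\Z_{\ge Z}$, a direct inspection of the partial sums $\sum_{i \le r}(\cdot)(i)$ shows $\bar\eta \preceq \eta^{-\mathrm{step}(Z)}$ in the order of Section~\ref{secgraph}: the reversed step has weakly more particles in every ray $[r, \infty)$.

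Next, I would invoke attractivity of ASEP under the basic coupling, which propagates this inequality in time: $\bar\eta_\ell \preceq \eta^{-\mathrm{step}(Z)}_\ell$ for every $\ell \ge 0$, and therefore
\[
  \LL\bigl(\eta^{-\mathrm{step}(Z)}_\ell\bigr) \;\ge\; \LL(\bar\eta_\ell).
\]
But $\bar\eta_\ell$ consists of a single isolated particle, whose position is $Z + W_\ell$, where $W_\ell$ is a continuous-time biased random walk on $\Z$ with independent right/left clocks of rates $p$ and $q$, drift $p - q > 0$, and $W_0 = 0$. Both probabilities \eqref{bms2} and \eqref{bms} are then bounded above by the corresponding probabilities for $W_\ell - 0$ falling below $-R$ or $-t^\delta$.

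Finally, I would close the argument by recalling two textbook facts about $W_\ell$. A Chernoff bound on the moment generating function of $W_t$, viewed as a difference of two independent Poisson processes of rates $p$ and $q$, yields $\Pb(W_t < -R) \le C_1 e^{-C_2 R}$ uniformly in $t \ge 0$ (the coefficient of $t$ in the log-MGF is strictly negative for sufficiently small $\lambda > 0$, since its derivative at $\lambda = 0$ equals $q-p < 0$). Passing to the discrete skeleton of $W_\ell$ — a nearest-neighbor walk on $\Z$ with up-probability $p$ and down-probability $q$ — the gambler's-ruin formula gives $\Pb(\inf_{\ell \ge 0} W_\ell \le -R) = (q/p)^R$, from which \eqref{bms} follows with $C_2 = \log(p/q) > 0$. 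The only step that needs a moment of care is orienting the monotonicity correctly, since the direction of $\preceq$ and the direction of $\LL$ monotonicity both reverse together under $\succeq$; once the sign is fixed, no fine ASEP information — in particular no KPZ or integrability input — is needed, and the entire argument is a routine attractivity-plus-Chernoff estimate.
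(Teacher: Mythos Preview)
The paper does not give its own proof of this proposition; it is cited from \cite{N20AAP}. Your proposed argument, however, has a real gap that is not merely a sign to straighten out.

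The claim $\bar\eta \preceq \eta^{-\mathrm{step}(Z)}$ is false: for any $r>Z$ one has $\sum_{i\le r}\eta^{-\mathrm{step}(Z)}(i)=r-Z+1>1=\sum_{i\le r}\bar\eta(i)$, contrary to the defining inequality. The correct relation is $\eta^{-\mathrm{step}(Z)}\preceq\bar\eta$, and attractivity then yields $\LL(\bar\eta_\ell)\ge \LL(\eta^{-\mathrm{step}(Z)}_\ell)$, the reverse of what you need. This reflects the dynamics: the leftmost particle of the reversed step jumps left at rate $q$ unobstructed but jumps right at rate $p$ only when its right neighbour is empty, whereas the free walker always jumps right at rate $p$. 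The free walker is therefore an \emph{upper} bound on $\LL(\eta^{-\mathrm{step}(Z)}_\ell)$ and gives no control on left excursions; both your Chernoff argument for \eqref{bms2} and your gambler's-ruin argument for \eqref{bms} bound the wrong quantity.

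A route that does work is to compare with the stationary (blocking) measure $\mu$ on the positive-recurrent sector $\Omega_Z=\{\zeta:\sum_{j<Z}\zeta(j)=\sum_{j\ge Z}(1-\zeta(j))<\infty\}$. The configuration $\eta^{-\mathrm{step}(Z)}$ is the $\preceq$-maximal element of $\Omega_Z$, so coupling with a stationary copy $\zeta_0\sim\mu$ gives $\LL(\eta^{-\mathrm{step}(Z)}_t)\ge\LL(\zeta_t)$ and hence $\Pb\bigl(\LL(\eta^{-\mathrm{step}(Z)}_t)<Z-R\bigr)\le\mu(\LL<Z-R)$. Under the blocking measure the occupation probability at site $Z-k$ is of order $(q/p)^{k}$, so $\mu(\LL<Z-R)\le C_1 e^{-C_2 R}$ with $C_2=\log(p/q)$, yielding \eqref{bms2}; the bound \eqref{bms} then follows by combining \eqref{bms2} at integer times with a crude control on the one-step displacement of $\LL$.
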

The following is our first Theorem bounding the number of holes which overtake a leftmost second class particle. 
\begin{tthm}\label{61}
Let $\eta \in\{0,1,2\}^{\Z}$, and for $i=0,1,2$  let $A_{i}=\{j\in \Z:\eta(j)=i\}.$
We assume that $ A_{0},A_{2}$ are infinite sets such that 
\begin{equation*}
\max A_{0}<\min A_{2}.
\end{equation*}
Let $\mathcal{L}^{2}(\eta_{s})$ denote the position of the leftmost $2$ of $\eta_{s}$.
Let $M_s $ be the number of $0's$ to the right of $\mathcal{L}^{2}(\eta_{s})$ and let $\delta>	0$. Then  there is a $t_0$ such that for $t>t_{0}$ and constants $C_{1},C_{2}>0$  we have
\begin{equation*}
\Pb\left(\sup_{0\leq s\leq t}M_{s}>t^{\delta}\right)\leq C_{1}e^{-C_{2}t^{\delta}}.
\end{equation*}
\end{tthm}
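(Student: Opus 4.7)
My plan is to couple the three--species process $\eta$ with the two--species reverse--step ASEP $\eta^{-\mathrm{step}(Z_0)}$, where $Z_0:=\min A_2$, and combine the monotonicity of the basic coupling with a censoring argument in the spirit of \cite{BBHM}. The starting observation is that the projection $\bar\eta(i):=\mathbf{1}[\eta(i)\in\{1,2\}]$ evolves under the basic coupling as a 2--species ASEP; the hypothesis $\max A_0<Z_0$ forces $\bar\eta_0(i)=1$ for every $i\geq Z_0$, hence $\bar\eta_0\geq \eta^{-\mathrm{step}(Z_0)}$ pointwise. Monotonicity of the 2--species ASEP under the basic coupling then gives $\bar\eta_s\geq\eta^{-\mathrm{step}(Z_0)}_s$ pointwise for all $s\geq 0$; in particular the 0--positions of $\bar\eta_s$ form a subset of the 0--positions of $\eta^{-\mathrm{step}(Z_0)}_s$.

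Since two 2's never swap with each other (the swap rule triggers no action for a same--species pair), the 2's preserve their initial order and $\mathcal L^2(\eta_s)=L_s$ is exactly the trajectory of the 2--particle initially located at $Z_0$. A direct inspection of the edge firings shows that $M_s$ changes only through 0--2 swaps at the boundary of $L_s$: it increases by $1$ at rate $q$ when $\eta_s(L_s-1)=0$ and decreases by $1$ at rate $p$ when $\eta_s(L_s+1)=0$, while every other transition preserves $M_s$. Because $p>q$, the dynamics of $M_s$ is biased toward $0$.

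To turn this bias into the exponential tail estimate I would combine two ingredients. On one hand, applying particle--hole symmetry to Proposition~\ref{block} yields that the rightmost hole of $\eta^{-\mathrm{step}(Z_0)}_s$ stays at position $\leq Z_0-1+t^\delta$ uniformly in $s\leq t$ with probability at least $1-C_1 e^{-C_2 t^\delta}$, and by the monotonicity of the first paragraph the same bound is inherited by every $0$ of $\bar\eta_s$. On the other hand, I would use the censoring inequality (Proposition~\ref{censor}) to construct an auxiliary censored reverse--step process whose dynamics near its leftmost particle stochastically dominate the birth--death chain of $M_s$: censoring all transitions outside a spatial window around $Z_0$ of size $O(t^\delta)$ reduces the problem to a finite--segment ASEP for which Proposition~\ref{block} applies, and a standard queueing tail bound for a birth rate $q$, death rate of order $p$ chain produces
\[
\Pb\Bigl(\sup_{0\leq s\leq t}M_s>t^\delta\Bigr)\leq C_1 e^{-C_2 t^\delta}.
\]

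The main obstacle is the construction of this censoring scheme and the matching of the censored dynamics with $M_s$. A priori $L_s$ can drift leftward at rate $(p-q)$ through a cluster of 1's, so a direct lower bound of the form $L_s\geq Z_0-t^\delta$ is unavailable; consequently one cannot simply box $M_s$ into a window of width $t^\delta$. The key point to be exploited is that any leftward excursion of $L_s$ that produces a new $0$ on its right originates from a rate--$q$ $(0,2)$--swap and is always counterbalanced by the dominant rate--$p$ backward $(2,0)$--swap that restores the configuration. Formalizing this local balance by building the right censoring scheme on the reverse--step process and combining it with Proposition~\ref{block} is the technical heart of the argument.
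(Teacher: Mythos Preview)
Your proposal identifies the right tools (censoring, comparison with reverse step) but does not carry out the key step, and the route you sketch will not close the gap. The birth--death description of $M_s$ is correct, but the death rate is $p\cdot\mathbf{1}[\eta_s(L_s+1)=0]$, which vanishes whenever a $1$ sits immediately to the right of the leftmost $2$; so $M_s$ is \emph{not} dominated by a biased random walk in any direct way. Your first monotonicity observation (0--positions of $\bar\eta_s$ lie inside those of $\eta^{-\mathrm{step}(Z_0)}_s$) bounds the 0's on the right but says nothing about $L_s$, which, as you note, can drift arbitrarily far left through blocks of $1$'s. Censoring transitions outside a window of size $t^{\delta}$ around $Z_0$ therefore does not capture the dynamics of $M_s$, and you explicitly leave the construction of a working censoring scheme undone.

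The paper's proof avoids the spatial issue entirely by a different projection: instead of lumping $1$'s and $2$'s together, it \emph{deletes all $1$'s} from $\eta_s$ and collapses the remaining $0$'s and $2$'s onto $\Z$ (merging the edges across deleted sites). The resulting process $\eta^*_s\in\{0,2\}^{\Z}$ starts from the reversed step configuration and evolves as an ASEP in which an edge is censored precisely when it arose from a merge (i.e.\ when the corresponding $0$ and $2$ are not nearest neighbours in $\eta_s$). In this projected process one has the exact identity $M_s=-\mathcal{L}^2(\eta^*_s)$, so Proposition~\ref{censor} and Proposition~\ref{block} apply directly. The point you were missing is that the correct ``relative coordinate'' is obtained by removing the $1$'s, not by grouping them with the $2$'s; this turns the troublesome interference of the $1$'s into a censoring scheme rather than an obstacle.
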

\begin{proof}
For each $\eta_s$, we produce an element $\eta^{*}_s\in \{0,2\}^{\Z}$ as follows. 
In $\eta_s$, we remove all $1's $ from the particle configuration, merging an edge whose right vertex was deleted with the  leftmost  edge  to its right whose left vertex was deleted. 
This leads to a sequence of $0's$ and $2's$ which becomes uniquely determined by imposing that it is an element of 
$$\left\{\eta\in \{0,2\}^{\Z}: \sum_{i=0}^{\infty}(2-\eta(i))=\sum_{-\infty}^{-1}\eta(i)<\infty\right\}.$$
We call $\eta^{*}_s$ this sequence. Note that by construction $\eta^{*}_0=2*\eta^{-step(0)}$. 
 It is a priori unclear why the stochastic process $(\eta^{*}_s, s\geq 0)$ should follow any exclusion dynamics. We however claim that  $(\eta^{*}_s, s\geq 0)$ is an ASEP with censoring. We first note that nothing changes upon replacing $(\eta^{*}_s,s \geq 0)$  by $(\frac{1}{2}*\eta^{*}_s,s \geq 0)$. The claim thus is that $(\frac{1}{2}*\eta^{*}_s,s \geq 0)$ is an ASEP started from $\eta^{-step(0)}$ with censoring.  The censoring scheme $\mathcal{C}$ is as follows: An edge in $(\eta^{*}_s,s \geq 0)$ is censored at a time $t$ if it results from the merge of edges in $\eta_t$. 
 
To prove the claim, we consider an update of $\eta_t$ along an edge $(i,i+1)$. If $\eta_t (i)=\eta_{t}(i+1)$, both  $\eta_t, \eta^{*}_t$ do not change. If $\eta_t (i)=1$ or $\eta_t (i+1)=1$,  updating the edge $(i,i+1)$  will not change the configuration $\eta^{*}_t$
. If $\{\eta_t (i),\eta_t (i+1)\}=\{0,2\}$, then the edge $(i,i+1)$ does not merge and  
a swap  in $\eta_t$ will likewise be performed 
in $\eta^{*}_t$.  Since these are all possible updates in $\eta_t$, we see that the $0's$ and $2's$ of $(\eta^{*}_s,s\geq 0)$ follow an exlusion dynamics  where an edge $(j,j+1)$ is censored in $\eta^{*}_t$  if it results from a merger, because then the $0,2$ occupying $j,j+1$ in $\eta^{*}_t$ are not nearest neighbours in $\eta_t$ - hence the $0,2$ cannot swap their positions in $\eta_t$, and thusly not in $\eta^{*}_t$. 


  
Then, by Proposition \ref{censor} and Proposition   \ref{block} we have
\begin{align*}
\Pb\left(\inf_{0\leq s\leq t}\LL^{2}\left(\eta^{*}_s\right)<-t^{\delta}\right)&=\Pb\left(\inf_{0\leq s\leq t}\LL\left(\frac{1}{2}\eta^{*}_s\right)<-t^{\delta}\right)\\&\leq\Pb\left(\inf_{0\leq s\leq t}\LL\left(\eta^{-step(0)}_s\right)<-t^{\delta}\right)
\\&\leq C_1 e^{-C_2 t^{\delta}}.
\end{align*}
Let $M^{*}_s$ be the number of $0's$ to the right of $\LL^{2}\left(\eta^{*}_s\right)$. By construction, we have  $M^{*}_s=M_s$  and  $M^{*}_s=-\LL^{2}\left(\eta^{*}_s\right)$.
Thus we conclude 
\begin{align*}
\Pb\left(\sup_{0\leq s\leq t}M_s >t^{\delta}\right)&=\Pb\left(\sup_{0\leq s\leq t}M^{*}_s >t^{\delta}\right)
\\&=\Pb\left(\inf_{0\leq s\leq t}\LL\left(\eta^{*}_s\right)<-t^{\delta}\right)
\\&\leq C_1 e^{-C_2 t^{\delta}}.
\end{align*}
\end{proof}
We also  obtain the following dual version of Theorem \ref{61}.
\begin{tthm}\label{62}
Let $\eta' \in\{0,1,2\}^{\Z}$, and for $i=0,1,2$  let $A_{i}=\{j\in \Z:\eta'(j)=i\}$
We assume that $ A_{1},A_{2}$ are infinite sets such that 
\begin{equation*}
\max A_{2}<\min A_{1}.
\end{equation*}
Let $\mathcal{L}(\eta'_{s})$ denote the position of the leftmost $1$ of $\eta'_{s}$.
Let $M'_{s} $ be the number of $2's$ to the right of $\mathcal{L}(\eta'_{s})$ and let $\delta>	0$. Then  there is a $t_0$ such that for $t>t_{0}$ and constants $C_{1},C_{2}>0$  we have
\begin{equation*}
\Pb\left(\sup_{0\leq s\leq t}M'_{s}>t^{\delta}\right)\leq C_{1}e^{-C_{2}t^{\delta}}.
\end{equation*}
\end{tthm}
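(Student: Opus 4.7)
The plan is to mirror the proof of Theorem~\ref{61} with the roles of the two extreme classes swapped: I would delete all $0$'s from $\eta'$ instead of all $1$'s. For each $s$, produce $\eta^{'*}_s \in \{1,2\}^{\Z}$ by removing the $0$'s of $\eta'_s$ and merging edges exactly as in Theorem~\ref{61}, normalized so that $\eta^{'*}_s$ is the unique element of
$$\left\{\eta \in \{1,2\}^{\Z} : \sum_{i=0}^{\infty}(\eta(i)-1) = \sum_{i=-\infty}^{-1}(2-\eta(i)) < \infty\right\}$$
compatible with the initial data; concretely $\eta^{'*}_0(i)=2$ for $i\leq -1$ and $\eta^{'*}_0(i)=1$ for $i\geq 0$. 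Under the relabelling $\tilde\eta_s := 2-\eta^{'*}_s \in \{0,1\}^{\Z}$ one has $\tilde\eta_0 = \eta^{-\mathrm{step}(0)}$.

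The first step, and the main obstacle, is to verify that $(\tilde\eta_s, s\geq 0)$ is an ASEP from $\eta^{-\mathrm{step}(0)}$ run with a censoring scheme $\mathcal{C}'$, where an edge of $\eta^{'*}$ is censored at time $t$ precisely when the two sites it joins arose from positions of $\eta'_t$ separated by at least one $0$. The required case analysis on updates of $\eta'_t$ along edges $(i,i+1)$ runs parallel to Theorem~\ref{61}: swaps of type $\{0,1\}$ or $\{0,2\}$ delete one vertex and thus produce no motion in $\eta^{'*}$, whereas $\{1,2\}$ swaps in $\eta'$ happen only when $1$ and $2$ are already adjacent and hence transfer to uncensored $\{1,2\}$ updates in $\eta^{'*}$.

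Next I would establish the conservation law $M^{'*}_s + \LL(\eta^{'*}_s) = 0$, where $M^{'*}_s$ denotes the number of $2$'s in $\eta^{'*}_s$ to the right of its leftmost $1$. Both quantities vanish at $s=0$, and the only swaps in $\eta^{'*}$ that can alter either of them are $\{1,2\}$ swaps across the leftmost $1$, which change $M^{'*}$ and $\LL$ by opposite signs. Since removing the $0$'s affects neither the number of $2$'s to the right of the leftmost $1$ nor the identity of that particle, and since $\tilde\eta_s$ has a $1$ exactly where $\eta^{'*}_s$ does, one has $M'_s = M^{'*}_s = -\LL(\eta^{'*}_s) = -\LL(\tilde\eta_s)$.

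Finally I would apply Proposition~\ref{censor} to the censored ASEP $\tilde\eta_s$, yielding $\LL(\tilde\eta_s) \geq \LL(\eta^{-\mathrm{step}(0)}_s)$ under the natural coupling, and Proposition~\ref{block} with $Z=0$ then delivers
$$\Pb\!\left(\sup_{0\leq s \leq t} M'_s > t^{\delta}\right) \leq \Pb\!\left(\inf_{0\leq \ell \leq t}\LL(\eta^{-\mathrm{step}(0)}_{\ell})<-t^{\delta}\right) \leq C_1 e^{-C_2 t^{\delta}},$$
which is the desired bound. Once the censoring representation is in hand, the remaining steps are direct transcriptions of the corresponding pieces of Theorem~\ref{61}.
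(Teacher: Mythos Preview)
Your proposal is correct and follows essentially the same approach as the paper: delete all $0$'s from $\eta'$, observe that the resulting $\{1,2\}$-valued process is (after the relabelling $2\mapsto 0$) a censored ASEP started from $\eta^{-\mathrm{step}(0)}$, identify $M'_s$ with minus the leftmost-particle position of that process, and conclude via Propositions~\ref{censor} and~\ref{block}. The paper's own proof is terser but identical in substance, merely pointing to the argument of Theorem~\ref{61} rather than rewriting the case analysis and conservation law that you spell out explicitly.
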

\begin{proof}
The proof is very similar to the proof of Theorem \ref{61}, except that here we construct the sequence $\eta'^{*}$ by deleting all $0's$ from  $\eta'$.   This results in a stochastic process $(\eta'^{*}_{t},t\geq 0)$ which starts from 
$$ 2\mathbf{1}_{\Z_{<0}}+\mathbf{1}_{\Z_{\geq 0}}.$$
Identifying the $2'$ as $0's$, we see as in the proof of Theorem   \ref{61} that $(\eta'^{*}_{t},t\geq 0)$ can be identified as an ASEP started from $\eta^{-step(0)}$ with censoring. The position of the leftmost particle at time $s$  of this ASEP with censoring then equals $-M^{'}_s$, finishing the proof. 

\end{proof}
We actually will use the following variant of Theorem \ref{62}.
\begin{cor}\label{cor62}
Let $(\eta^{'N},N\geq 1)$ be a sequence of initial data, and let $A_{i}(N)=\{j\in \Z: \eta^{'N}(j)=i\}$. Assume that 
$$\max A_{2}(N)<\min A_{1}(N), \quad N\geq 1.$$ Let $\delta,D>0$ and denote by  $M'_{s}(N) $ the number of $2's$ to the right of $\mathcal{L}(\eta^{'N}_{s})$. 
Then there is a $N_0$ such that for $N>N_0$ and  constants $\tilde{C}_1,\tilde{C}_2 $ 
we have 
\begin{equation}\label{NNN}
\Pb\left(\sup_{0\leq s\leq D N}M'_{s}(N)>N^{\delta}\right)\leq \tilde{C}_{1}e^{-\tilde{C}_{2}N^{\delta}}.
\end{equation}
\end{cor}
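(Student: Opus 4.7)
The plan is to mimic the proof of Theorem \ref{62} verbatim, now applied to each member of the sequence $(\eta^{'N})_{N \geq 1}$. The key observation---already implicit in the proof of Theorem \ref{62}---is that the censoring reduction used there (delete all $0$'s and recolor the $2$'s as particles) produces a process $(\eta^{'*,N}_s, s \geq 0)$ whose initial configuration equals $2\mathbf{1}_{\Z_{<0}} + \mathbf{1}_{\Z_{\geq 0}}$ irrespective of $N$; only the censoring scheme depends on $\eta^{'N}$. Along this identification one has the pathwise equality $M'_s(N) = -\LL(\eta^{'*,N}_s)$.

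First I would combine this reduction with the censoring inequality (Proposition \ref{censor}) to obtain the $N$-uniform bound
\[
\Pb\!\left(\sup_{0 \leq s \leq DN} M'_s(N) > N^{\delta}\right) \;\leq\; \Pb\!\left(\inf_{0 \leq s \leq DN} \LL(\eta^{-\mathrm{step}(0)}_s) < -N^{\delta}\right),
\]
the right-hand side being a purely quantitative question about reversed-step ASEP on $\Z$, independent of $\eta^{'N}$.

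Then I would apply Proposition \ref{block} with time window $t = DN$ and scaling exponent $\delta'_N := \delta \log N / \log(DN)$, so that $(DN)^{\delta'_N} = N^{\delta}$ holds identically. Since $\delta'_N \to \delta$ and the constants $C_1, C_2$ in Proposition \ref{block} depend only on $p$ (and not sensitively on the exponent parameter in a compact range), estimate \eqref{bms} yields $\Pb(\inf < -N^{\delta}) \leq C_1 e^{-C_2 N^{\delta}}$ for $N$ large, which is precisely \eqref{NNN}. Alternatively, for a fixed exponent one may pick any $\delta'' \in (0, \delta)$; then $(DN)^{\delta''} \leq N^{\delta}$ for $N$ large, the event $\{\inf < -N^{\delta}\}$ is contained in $\{\inf < -(DN)^{\delta''}\}$, and Proposition \ref{block} gives the bound $C_1 e^{-C_2 (DN)^{\delta''}}$, which is slightly weaker but still of the claimed exponential form once $\delta''$ is taken arbitrarily close to $\delta$.

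The main (and essentially only) obstacle is the bookkeeping of the two independent scaling parameters---time horizon $DN$ versus spatial threshold $N^{\delta}$---when invoking Proposition \ref{block}. The probabilistic content is entirely contained in Theorem \ref{62} together with Propositions \ref{censor} and \ref{block}; the uniformity in $N$ is manifest from the fact that the reduced process's initial data does not depend on $N$.
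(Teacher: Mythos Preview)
Your reduction argument has a genuine gap: the claim that deleting all $0$'s yields a process whose initial data is $2\mathbf{1}_{\Z_{<0}}+\mathbf{1}_{\Z_{\geq 0}}$ \emph{irrespective of $N$} is only valid when both $A_{1}(N)$ and $A_{2}(N)$ are infinite. That hypothesis is present in Theorem~\ref{62} but is \emph{not} assumed in Corollary~\ref{cor62}. If, say, $A_{2}(N)$ has only $m$ elements, the reduced configuration consists of $m$ second-class particles followed by the $1$'s; there is nothing to fill the half-line to the left, so the reduced process is not an ASEP on $\Z$ started from reversed step, and neither the censoring inequality (Proposition~\ref{censor}) nor Proposition~\ref{block} applies as written. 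The same issue arises if $A_{1}(N)$ is finite. Since the corollary is invoked in the proof of Theorem~\ref{theorem5} for the configuration $\mathbf{1}_{\{j\ge 1:\hat{\xi}^{\infty}(j)=1\}}+2\,\mathbf{1}_{\{j\le 0:\eta^{\infty}(j)=1\}}$, where either set may well be finite, this case cannot be ignored.

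The paper closes this gap by a monotonicity-plus-padding argument: when $A_{2}(N)$ is finite one turns all $0$'s to the left of $\min A_{2}(N)$ into $2$'s, which can only increase $M'_{s}(N)$, and then Theorem~\ref{62} applies; when $A_{1}(N)$ is finite one appends an infinite block of $1$'s at distance $N^{N}$ to the right, far enough that it does not interact with the original particles on the time horizon $[0,DN]$ except on an event of probability $\leq C_{3}e^{-C_{4}N^{N/2}}$. Your time-scale bookkeeping (reconciling horizon $DN$ with threshold $N^{\delta}$) is fine in spirit and matches what the paper does implicitly, but the missing infiniteness check is the essential point.
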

\begin{proof}
We first note that the constants in Theorem \ref{61}  - and thus in Theorem \ref{62} - do not depend on the configuration $\eta$ resp. $\eta'$. They are the constants from Proposition \ref{block}. 
Hence \eqref{NNN} holds by Theorem \ref{62} for every $N$  when $A_1 (N), A_2(N)$ are infinite. When $A_2 (N)$ is finite and $A_{1}(N)$ is infinite, we may turn all $0's$ to the left of $\min A_2 (N)$ into $2's$.
This cannot decrease $M'_{s}(N)$ , and the statement follows from Theorem \ref{62}. 

Finally, if $A_1 (N)$ is finite,  we may add an infinite group of $1's$ so far to the right that it does not affect the evolution of the particles of 
$(\eta^{'N},N\geq 1)$  during $[0,DN]$. Specifically, take $N_0$ large and consider for $N\geq N_0$  the initial configuration 
$$\hat{\eta}^{'N}=\eta^{'N}+\mathbf{1}_{[\max A_{1}(N)+N^{N};\infty)}.$$
The probability that a particle initially present in $A_1(N), A_2(N)$ is affected during $[0,DN]$ by the particles from  $[\max A_{1}(N)+N^{N};\infty)$ can be bounded by the probability that a rate $pDN$ Poisson variable exceeds the value $N^{N}/2$, since for this to happen, at least one  particle from $A_1 (N), A_2 (N),[\max A_{1}(N)+N^{N};\infty)$  would need to travel a distance  $N^{N}/2$ during $[0,DN]$ . Thus this probability can be crudely  bounded by $C_3 e^{-C_4 N^{N/2}}.$
In particular, we have $$\Pb\left(\sup_{0\leq s\leq D N}M'_{s}(N)\neq \sup_{0\leq s\leq D N}\hat{M}'_{s}(N)\right)\leq C_3 e^{-C_4 N^{N/2}}.$$
Hence we may apply Theorem \ref{62}  to  $ \hat{\eta}^{'N}$ and obtain 
\begin{align}
\Pb\left(\sup_{0\leq s\leq D N}M'_{s}(N)>N^{\delta}\right)\leq C_{1}e^{-C_{2}N^{\delta}}+ C_3 e^{-C_4 N^{N/2}}\leq 2C_{1}e^{-C_{2}N^{\delta}},
\end{align}
finishing the proof with $\tilde{C}_1=2C_1, \tilde{C}_2=C_2$.
\end{proof}
\section{Upper bound}\label{secupper}
The aim of this section is to establish $1-F$ as upper bound to the TV distance. For this, we upper bound the asymptotics of the TV distance by  the asymptotics of a Hitting time $\mathfrak{H}$. The main task then is to show that the probability that $\mathfrak{H}$ exceeds $\g$ converges to $1-F(c),$ which is what we do in Theorem \ref{HF}. 
\subsection{Hitting time }
We define the configuration 
\begin{equation}\label{ximax}
\{0,1\}^{[1;b_N]}\ni \xi^{\max}=\mathbf{1}_{[b_N-k_N+1;b_N]}.
\end{equation}
We consider the extension $\bar{\xi}^{\infty}$ of $\xi^{N}$ to $\Z$ obtained by removing the boundaries of the segment, i.e.
\begin{equation}\label{ext1}\bar{\xi}^{\infty}(i)=\begin{cases} \xi^{N}(i),\, i\in [1;b_{N}]\\
0\quad  \,\, \, i\in \Z\setminus[1;b_{N}].
\end{cases}\end{equation}
We define  particle configurations on $\Z$ given by 
\begin{equation}
\label{zeta}
\{0,1\}^{\Z}\ni\zeta^{N}=\bar{\xi}^{\infty}+\mathbf{1}_{\Z_{>b_{N}}}
\end{equation}
\begin{equation}
\label{zeta2}
\{0,1\}^{\Z}\ni \zeta^{\max}=\mathbf{1}_{\Z>(b_N-k_N)}.
\end{equation}
Given initial configurations  $\xi\in \{0,1\}^{[1;b_N]},\zeta\in \{0,1\}^{\Z}$ we define the respective hitting times of $\xi^{\max}, \zeta^{\max}$ as 

\begin{equation}
\mathfrak{g}(\xi)=\inf\{t:\xi_{t}=\xi^{\max}\} \quad \mathfrak{H}(\zeta)=\inf\{t:\zeta_{t}=\zeta^{\max}\}.
\end{equation}
Then, under the basic coupling, we have the inequality 
\begin{equation}\label{20}
\mathfrak{g}(\xi^{N})\leq \mathfrak{H}(\zeta^{N}).
\end{equation}
The inequality \eqref{20} was proven in equation  (20) of \cite{BN22} for the case $b_N=N, \xi^{N}=\mathbf{1}_{[1;k_N]}$ but the  same proof verbatim shows \eqref{20} for arbitrary $\xi^{N}$ and $b_N$. 
\begin{prop}\label{dprop}
We have 
\begin{equation}
\limsup_{N\to\infty}d_{\xi^{N}}(g(\xi^{N},c))\leq \limsup_{N\to\infty} \Pb(\mathfrak{H}(\zeta^{N})>g(\xi^{N},c))
\end{equation}

\end{prop}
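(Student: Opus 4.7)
The plan is to chain two inequalities. The first, a monotone-coupling plus triangle argument, reduces the total variation distance from equilibrium to the hitting time of the rightmost configuration:
\[
  d_{\xi^N}(g(\xi^{N},c)) \leq \Pb\bigl(\mathfrak{g}(\xi^N) > g(\xi^{N},c)\bigr) + o(1).
\]
The second is the already-stated pointwise inequality \eqref{20}, namely $\mathfrak{g}(\xi^N) \leq \mathfrak{H}(\zeta^N)$ a.s.\ under the basic coupling, which yields
\[
  \Pb\bigl(\mathfrak{g}(\xi^N) > g(\xi^{N},c)\bigr) \leq \Pb\bigl(\mathfrak{H}(\zeta^N) > g(\xi^{N},c)\bigr).
\]
Combining the two and taking $\limsup_{N\to\infty}$ gives the conclusion.

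To derive the first inequality, I would basic-couple $\xi^N$ with an auxiliary process $\widetilde{\xi}^{\max}_t$ started from $\xi^{\max}$, and apply the triangle inequality
\[
  d_{\xi^N}(t) \leq ||P^{\xi^N}_t - P^{\xi^{\max}}_t||_{\mathrm{TV}} + d_{\xi^{\max}}(t).
\]
For the first summand, attractivity combined with $\xi^N \preceq \xi^{\max}$ gives $\xi^N_t \preceq \widetilde{\xi}^{\max}_t$ for every $t$. At $t = \mathfrak{g}(\xi^N)$ we have $\xi^N_t = \xi^{\max}$; since $\xi^{\max}$ is the unique maximal element of $\preceq$ on $\Omega_{b_N,1,k_N}$, this forces $\widetilde{\xi}^{\max}_t = \xi^{\max}$ as well. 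Sharing the same state and the same Poisson clocks from then on, the two processes remain equal for all later times, so $\{\xi^N_t \neq \widetilde{\xi}^{\max}_t\} \subseteq \{\mathfrak{g}(\xi^N) > t\}$ and thus $||P^{\xi^N}_t - P^{\xi^{\max}}_t||_{\mathrm{TV}} \leq \Pb(\mathfrak{g}(\xi^N) > t)$.

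For the second summand one needs $d_{\xi^{\max}}(g(\xi^{N},c)) \to 0$. The configuration $\xi^{\max}$ is the unique mode of $\pi_{b_N,k_N}$, whose density is proportional to $Q^{N(\eta)}$ with $Q = q/p$ and $N(\eta)$ the number of inversions; moreover, by Proposition~4.2 of \cite{BN22}, $\pi_{b_N,k_N}$ concentrates on configurations whose leftmost particle lies within $O(1)$ of $b_N - k_N$. Thus $\xi^{\max}$ is already close to typical equilibrium configurations, and one expects its mixing time to be of smaller order than $N$ (in the notation of \cite{LL19}, $D(\xi^{\max}) = 0$), which yields $d_{\xi^{\max}}(g(\xi^{N},c)) = o(1)$.

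The main obstacle is precisely this last fast-mixing estimate. While intuitively clear from the concentration of $\pi_{b_N,k_N}$ near $\xi^{\max}$, a rigorous justification would require either invoking mixing-time estimates from $\xi^{\max}$ due to \cite{LL19}, or constructing an explicit coupling of $\widetilde{\xi}^{\max}_t$ with a stationary process whose coupling time is $o(N)$, for instance by combining Proposition~4.2 of \cite{BN22} with the censoring inequality (Proposition~\ref{censor}) to control discrepancies near the right boundary.
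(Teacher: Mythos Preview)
Your decomposition is sound and close in spirit to the paper's, but the two differ in where the ``equilibrium comparison'' enters. You triangle through $\xi^{\max}$, obtaining
\[
  d_{\xi^N}(t)\;\le\;\Pb\bigl(\mathfrak{g}(\xi^N)>t\bigr)\;+\;d_{\xi^{\max}}(t),
\]
and then face the obstacle of showing $d_{\xi^{\max}}(g(\xi^N,c))=o(1)$. The paper instead couples $\xi^N_t$ directly with a chain $\pi_t^{b_N,k_N}$ started from stationarity, so that
\[
  d_{\xi^N}(t)\;\le\;\Pb\bigl(\mathfrak{g}(\xi^N)\ge t\bigr)\;+\;\Pb\bigl(\mathfrak{g}(\pi_0^{b_N,k_N})\ge t\bigr),
\]
since if both chains have hit $\xi^{\max}$ by time $t$ they have coalesced. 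Your route and the paper's are in fact equivalent: bounding $d_{\xi^{\max}}(t)$ by coupling with stationarity and using maximality of $\xi^{\max}$ gives exactly $d_{\xi^{\max}}(t)\le\Pb(\mathfrak{g}(\pi_0^{b_N,k_N})>t)$, so both reductions land on the same stationary hitting-time term.

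Where your proposal is incomplete is the resolution of that term. The paper does \emph{not} invoke mixing-time results from \cite{LL19} or the censoring inequality here. Instead it (i) lifts to $\Z$ via \eqref{20}, replacing $\mathfrak{g}(\pi_0^{b_N,k_N})$ by $\mathfrak{H}(\pi_0^{b_N,k_N,\infty})$; (ii) uses the stationary concentration bound (Proposition~4.2 of \cite{BN22}) to show that with high probability $\pi_0^{b_N,k_N,\infty}\succeq I^{l}$ for $l=g(\xi^N,c)^{1/10}$, where $I^l=\mathbf{1}_{[b_N-k_N-l+1;\,b_N-k_N]}+\mathbf{1}_{\Z_{>b_N-k_N+l}}$ is a block configuration of width $2l$; and (iii) applies Theorem~1.9 of \cite{BBHM}, which gives $\Pb(\mathfrak{H}(I^{l})\ge g(\xi^N,c))\to 0$ for this $l$. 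This is a concrete and short argument; your suggested alternatives would work in principle but are less direct.
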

\begin{proof}
We start at time $0$ an ASEP from the stationary distribution and call 
 $\pi_{t}^{b_N,k_N}$  the state of  this  ASEP at time $t.$ 
 By Proposition 4.7 of \cite{LPW17}, we have for the basic coupling (in fact, any coupling) the inequality 
 \begin{align}
 d_{\xi^{N}}(t)\leq \Pb(\pi_{t}^{b_N,k_N}\neq \xi^{N}_t).
 \end{align}
Now if $\pi_{t}^{b_N,k_N}\neq \xi^{N}_t$ we cannot have $\max\{\mathfrak{g}(\xi^{N}),\mathfrak{g}(\pi_{0}^{b_N,k_N})\}\leq t$ since in that case the two processes must have coalesced before time $t$. Consequently we have 
 \begin{align}
 d_{\xi^{N}}(t)\leq \Pb(\mathfrak{g}(\pi_{0}^{b_N,k_N})\geq t)+\Pb(\mathfrak{g}(\xi^{N})\geq t).
 \end{align}
Setting  $\bar{\pi}_{0}^{b_N,k_N}(j):=\pi_{0}^{b_N,k_N}(j), j\in [1;b_N],$  and $\bar{\pi}_{0}^{b_N,k_N}(j):=0$ for $j\in \Z\setminus[1;b_N]$, 
we define  a configuration  $\pi_{0}^{b_N,k_N,\infty}$ on $\Z$ via 
$$\{0,1\}^{\Z}\ni \pi_{0}^{b_N,k_N,\infty}(j)= \bar{\pi}_{0}^{b_N,k_N}(j)+\mathbf{1}_{\Z>b_N}(j) $$
and  thus obtain from \eqref{20} that 
 \begin{align}
 d_{\xi^{N}}(t)\leq \Pb(\mathfrak{H}(\pi_{0}^{b_N,k_N,\infty})\geq t)+\Pb(\mathfrak{H}(\xi^{N})\geq t).
 \end{align}
 Hence it remains to show 
  \begin{align}
\lim_{N\to\infty}\Pb(\mathfrak{H}(\pi_{0}^{b_N,k_N,\infty})\geq \g)=0. 
 \end{align}
To see this, define for $l\in \N$ the events 
\begin{align}
S_{b_N}(l)=\{\LL(\pi_{0}^{b_N,k_N,\infty})<b_N-k_N-l\}\quad \hat{S}_{b_N}(l)=\{\RR(\pi_{0}^{b_N,k_N,\infty})>b_N-k_N+l\}.
\end{align}
It follows from Proposition 4.2 of \cite{BN22} and particle-hole duality that there are constants $C_1, C_2$ independent of $N$ such that 
\begin{equation}
\Pb(S_{b_N}(l)\cup \hat{S}_{b_N}(l))\le C_1 e^{-C_{2}l}.
\label{CC}
\end{equation}
Let now $R_{b_N}(l):=\Omega\setminus(S_{b_N}(l)\cup \hat{S}_{b_N}(l))$ and define the configuration on $\Z$ 
\begin{equation}
I^{l}=\mathbf{1}_{[b_{N}-k_{N}-l+1;b_N-k_N]}+\mathbf{1}_{\Z_{>b_N-k_N+l}}.
\end{equation}
We then have $R_{b_N} (L)\subseteq \{I^{l}\preceq  \pi^{b_N,k_N}_{0}\}$ and thus in particular
\begin{equation}\label{23}
R_{b_N} (L)\subseteq \{\mathfrak{H}(I^{l})\geq \mathfrak{H}(\pi_{0}^{b_N,k_N,\infty})\}.
\end{equation}
Now 
Theorem 1.9 of \cite{BBHM}\footnote{See Theorem 5 of \cite{BN22} for a reformulation of Theorem 1.9 of \cite{BBHM} which applies here after a shift of the configuration $I^{M}$ of Theorem 5 of \cite{BN22} .} implies that for $l=\g^{1/10}$ we have 
\begin{equation}
\label{24}
\lim_{N\to\infty}\Pb(\mathfrak{H}(I^{\g^{1/10}})\geq \g)=0. \end{equation}
At the same time, by \eqref{CC} we have $\lim_{N\to\infty}\Pb(R_{b_N} (\g^{1/10})=1$. 
Putting everything together, we thus obtain 
  \begin{align}
&\lim_{N\to\infty}\Pb(\mathfrak{H}(\pi_{0}^{b_N,k_N,\infty})\geq \g)
\\&=\lim_{N\to\infty}\Pb(\{\mathfrak{H}(\pi_{0}^{b_N,k_N,\infty})\geq \g \}\cap R_{b_N} (\g^{1/10}))
\\&\leq \lim_{N\to\infty}\Pb(\mathfrak{H}(I^{\g^{1/10}})\geq \g)=0, \label{25}
 \end{align}
 where the left  inequality in \eqref{25} is \eqref{23} and the equality  in \eqref{25} is \eqref{24}.

\end{proof}
The following Theorem will establish $1-F(c)$ as upper bound to the TV distance. 
Its proof relies on Corollaries \ref{cor} and \ref{cor2}, proven in Section \ref{5}.
\begin{tthm}\label{HF}
Under the assumptions of Theorem \ref{main}, we have 
\begin{equation}
\limsup_{N\to\infty} \Pb(\mathfrak{H}>g(\xi^{N},c))=1-F(c). 
\end{equation}
\end{tthm}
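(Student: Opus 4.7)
My plan is to first reduce the hitting time event $\{\mathfrak{H}(\zeta^{N})>\g\}$ to a simple positional event about $\zeta^{N}_{\g}$, and then transport that positional event from the process $\zeta^{N}$ to the extension $\eta^{\infty}$ (in which the assumption \eqref{particles} lives) via the basic coupling and the overtaking bounds of Section \ref{secovertake}.

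\textbf{Step 1 (a conservation identity).} Let $P(t)$ be the number of particles of $\zeta^{N}_t$ at positions $\leq b_N-k_N$ and $H(t)$ the number of holes of $\zeta^{N}_t$ at positions $>b_N-k_N$. Every swap across the edge $(b_N-k_N,\,b_N-k_N+1)$ changes $P$ and $H$ by the same amount, while swaps across any other edge leave both unchanged; since $P(0)=H(0)$ equals the number of particles of $\xi^{N}$ lying in $[1,b_N-k_N]$, I get $P(t)=H(t)$ for all $t\geq 0$, and therefore
\[
\{\zeta^{N}_t\neq\zeta^{\max}\}\;=\;\{\LL(\zeta^{N}_t)\leq b_N-k_N\}\;=\;\{\RR(\zeta^{N}_t)>b_N-k_N\}.
\]
Moreover $\LL(\zeta^{N}_t)$ is the position at time $t$ of the particle initially at $\LL(\xi^{N})$, because particles added on $(b_N,\infty)$ cannot cross those of $\xi^{N}$.

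\textbf{Step 2 (hitting time vs.\ endpoint event).} The inclusion $\{\mathfrak{H}>\g\}\subseteq\{\zeta^{N}_{\g}\neq\zeta^{\max}\}$ is immediate and yields the upper bound $\Pb(\mathfrak{H}>\g)\leq\Pb(\LL(\zeta^{N}_{\g})\leq b_N-k_N)$. For the reverse, any outcome in $\{\LL(\zeta^{N}_{\g})\leq b_N-k_N\}\setminus\{\mathfrak{H}>\g\}$ must hit $\zeta^{\max}$ at some $s\leq\g$ and then backtrack. By the strong Markov property the post-$s$ process is an ASEP from $\zeta^{\max}=\mathbf{1}_{\Z_{>b_N-k_N}}$, that is, reversed step initial data shifted by $b_N-k_N$; Proposition \ref{block} then gives an exponentially small bound on the probability of such backtracking, uniformly in $s$. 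Hence $\Pb(\mathfrak{H}>\g)=\Pb(\LL(\zeta^{N}_{\g})\leq b_N-k_N)+o(1)$.

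\textbf{Step 3 (comparison of $\zeta^{N}$ and $\eta^{\infty}$).} Next I compare $\LL(\zeta^{N}_{\g})$ with $X_{k_N}(\g)$ under the basic coupling. The two initial configurations agree on $[1,b_N]$, but $\zeta^{N}$ has extra holes at every $i\leq 0$ and extra particles at every $i>b_N$ relative to $\eta^{\infty}$. In the coupled $\{0,1,2\}$-dynamics these discrepancies appear as second class particles injected on the left and as second class ``holes'' injected on the right. The displacement between $\LL(\zeta^{N}_{\g})$ and $X_{k_N}(\g)$ is bounded by the number of such objects that overtake the particle initially at $\LL(\xi^{N})$ during $[0,\g]$. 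Corollaries \ref{cor} and \ref{cor2}, whose cores are the overtaking bounds of Theorems \ref{61} and \ref{62}, show this number is at most $N^{\kappa/2}$ with probability tending to $1$, yielding $|\LL(\zeta^{N}_{\g})-X_{k_N}(\g)|\leq N^{\kappa/2}$ with high probability.

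\textbf{Step 4 (applying the assumption).} Combining Steps 2 and 3, up to events of vanishing probability the event $\{\mathfrak{H}>\g\}$ is sandwiched between $\{X_{k_N}(\g)\leq b_N-k_N-N^{\kappa/2}\}$ and $\{X_{k_N}(\g)\leq b_N-k_N+N^{\kappa/2}\}$. Since $N^{\kappa/2}=o(N^{1/3})$, each of these two events differs from $\{X_{k_N}(\g)\leq b_N-k_N-N^{\kappa}\}$ only by a spatial shift that is asymptotically negligible on the KPZ scale, so the continuity of $F$ together with assumption \eqref{particles} forces both bracketing probabilities to converge to $1-F(c)$. This completes the proof.

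\textbf{Main obstacle.} The most delicate step is Step 3: the configurations $\zeta^{N}$ and $\eta^{\infty}$ differ on two infinite sets and hence are not in general comparable under the attractive order $\preceq$, so simple monotone couplings are insufficient. The overtaking bounds of Section \ref{secovertake}, applied simultaneously to the left-side and right-side discrepancies through a carefully constructed multi-species coupling, are the essential ingredient, and this is precisely what Corollaries \ref{cor} and \ref{cor2} will package.
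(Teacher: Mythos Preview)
Your overall structure is reasonable, but there is a genuine gap that prevents the argument from closing, and it sits exactly at the interface of Steps~2 and~4.

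\textbf{The threshold mismatch.} Step~2 correctly gives $\{\mathfrak{H}>\g\}\subseteq\{\LL(\zeta^{N}_{\g})\leq b_N-k_N\}$, so you need $\limsup\Pb\bigl(\LL(\zeta^{N}_{\g})\leq b_N-k_N\bigr)\leq 1-F(c)$. But Corollary~\ref{cor} only gives $\Pb\bigl(\LL(\zeta^{N}_{\g})\geq b_N-k_N-N^{\kappa}\bigr)\to F(c)$, i.e.\ control at the threshold $b_N-k_N-N^{\kappa}$, not at $b_N-k_N$. Likewise, assumption~\eqref{particles} is stated only at the threshold $b_N-k_N-N^{\kappa}$. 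Your Step~4 appeal to ``a spatial shift that is asymptotically negligible on the KPZ scale'' and ``continuity of $F$'' does not follow from the hypotheses of Theorem~\ref{main}: continuity of $F$ lets you shift $c$ (hence time), not the spatial threshold, and no full KPZ limit is assumed here. Nothing in the assumptions rules out $\Pb\bigl(X_{k_N}(\g)\in(b_N-k_N-N^{\kappa},\,b_N-k_N+N^{\kappa/2}]\bigr)$ being bounded away from zero. The reverse direction of Step~2 is also not right as written: after hitting $\zeta^{\max}$ the leftmost particle backtracks one step with probability bounded away from zero, and Proposition~\ref{block} with $R=0$ gives no decay.

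\textbf{What the paper does instead.} The paper avoids this threshold problem by not reducing $\{\mathfrak{H}>\g\}$ to a single-time positional event. It introduces the two-sided event $B_N(c)=\{\LL(\zeta^{N}_{\g})\geq b_N-k_N-N^{\kappa}\}\cap\{\RR(\zeta^{N}_{\g})\leq b_N-k_N+N^{\kappa}\}$, shows $\Pb(B_N(c))\to F(c)$ via Corollaries~\ref{cor} and~\ref{cor2} (which give exactly these threshold probabilities, not a displacement bound $|\LL(\zeta^{N}_{\g})-X_{k_N}(\g)|\leq N^{\kappa/2}$ as you describe), and then invokes a separate hitting-time estimate (Proposition~3.2 of \cite{BN22}, ultimately from \cite{BBHM}) saying that on $B_N(c)$ one has $\mathfrak{H}\leq \g+N^{\hat\kappa}$ with high probability for any $\hat\kappa\in(\kappa,1/3)$. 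The $N^{\hat\kappa}$ time buffer is then absorbed by an $\varepsilon$-shift in $c$ and continuity of $F$. The hitting-time estimate is the essential missing ingredient in your argument; without it, the gap between the ``exact'' threshold $b_N-k_N$ and the ``controlled'' threshold $b_N-k_N-N^{\kappa}$ cannot be closed from the stated assumptions alone.
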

\begin{proof}[ Proof (assuming  Corollaries \ref{cor} and \ref{cor2})]
We define, as in \cite {BN22}, the event
$$B_{N}(c)=\left\{\RR(\zeta^{N}_{\g})\leq b_{N}-k_{N}+N^{\kappa})\right\}\cap \left\{\LL(\zeta^{N}_{\g})\geq b_{N}-k_{N}-N^{\kappa})\right\}$$
and show 
\begin{equation}\label{BNC}\lim_{N\to\infty}\Pb(B_{N}(c))=F(c).\end{equation}
To prove this, let us label the holes of $\zeta^{N}$ from right to left as 
$$\cdots<H^{N}_{b_N-k_{N}-2}<H^{N}_{b_N-k_{N}-1}<H^{N}_{b_N-k_{N}}=\RR(\zeta^{N}_{}).$$
Since under basic coupling we have (recall $H_{b_N-k_N+j}$ denote the holes of the extension $\eta^{\infty}$ as in \eqref{holeslab}) $H_{b_N-k_{N}-N^{\delta}}(\g)\leq H^{N}_{b_N-k_{N}-N^{\delta}}(\g)$ it follows 
$$H_{b_N-k_{N}-N^{\delta}}(\g)\leq H^{N}_{b_N-k_{N}-N^{\delta}}(\g)\leq \RR(\zeta^{N}_{\g})$$
implying for any $\kappa, \delta\in (0,1/3)$  by Corollary \ref{cor2} and \eqref{holes}
\begin{equation}\label{HN}
\lim_{N\to\infty}\Pb(H^{N}_{b_N-k_{N}-N^{\delta}}(\g)\leq b_{N}-k_{N}+N^{\kappa})=F(c). 
\end{equation}
The crucial point now is that because of \eqref{HN}, it is very unlikely that $H^{N}_{b_N-k_{N}-N^{\delta}}$ has reached position $ b_{N}-k_{N}+N^{\kappa}$ at time $\g$  but $H^{N}_{b_N-k_{N}}$ has not: By Corollary \ref{cor2} and \eqref{HN} we have 
\begin{equation}
\begin{aligned}\label{lim0}
\lim_{N\to\infty}&\Pb(\{H^{N}_{b_N-k_{N}-N^{\delta}}(\g)\leq b_{N}-k_{N}+N^{\kappa}\}\setminus \{H^{N}_{b_N-k_{N}}(\g)\leq b_{N}-k_{N}+N^{\kappa}\})\\&=0.
\end{aligned}
\end{equation}

Since to the left of the leftmost particle everything is filled with holes, we have the identity of events
\begin{align*}
\left\{\LL(\zeta^{N}_{\g})> b_{N}-k_{N}-N^{\kappa})\right\}=\left\{H_{b_N-k_N-N^{\kappa}}^{N}(\g)=b_N-k_N-N^{\kappa}\right\}.
\end{align*}
Hence we may likewise write 
\begin{equation}
\begin{aligned}\label{intersect}
\left\{\LL(\zeta^{N}_{\g})> b_{N}-k_{N}-N^{\kappa})\right\}=&\left\{\LL(\zeta^{N}_{\g})> b_{N}-k_{N}-N^{\kappa})\right\}\\&\cap \left\{H^{N}_{b_N-k_N-N^{\kappa}}(\g)\leq b_N+k_N+N^{\kappa}\right\} .
\end{aligned}
\end{equation} It follows directly from  Corollary \ref{cor} that 
\begin{align*}
F(c)&=\lim_{N\to\infty}\Pb\left(\left\{\LL(\zeta^{N}_{\g})> b_{N}-k_{N}-N^{\kappa})\right\}\right)=\lim_{N\to\infty}\Pb\left(\left\{\LL(\zeta^{N}_{\g})\geq  b_{N}-k_{N}-N^{\kappa})\right\}\right).
\end{align*}
Applying \eqref{lim0}  to \eqref{intersect} yields 
\begin{align*}
F(c)&=\lim_{N\to\infty}\Pb\left(\left\{\LL(\zeta^{N}_{\g})>  b_{N}-k_{N}-N^{\kappa})\right\}\right)
\\&=\lim_{N\to\infty}\Pb\left(\left\{\LL(\zeta^{N}_{\g})> b_{N}-k_{N}-N^{\kappa})\right\}\cap \left\{H^{N}_{b_N-k_N-N^{\kappa}}(\g)\leq b_N+k_N+N^{\kappa}\right\} \right)\\&=\lim_{N\to\infty}\Pb\left(\left\{\LL(\zeta^{N}_{\g})\geq b_{N}-k_{N}-N^{\kappa})\right\}\cap \left\{H^{N}_{b_N-k_N}(\g)\leq b_N+k_N+N^{\kappa}\right\} \right),
\end{align*}
showing \eqref{BNC}.
From now on, we can continue 
as in  \cite{BN22}, in particular we can show as in Proposition 3.2 of \cite{BN22} that for $\hat{\kappa}\in (\kappa,1/3)$ 
we have\footnote{In \cite{BN22}, $\hat{\kappa}=1/5$, but this plays no role.} 
$$\limsup_{N\to\infty} \Pb(\{\mathfrak{H}>g(\xi^{N},c)+N^{\hat{\kappa}}\}\cap B_{N}(c))=0. $$
Thus,  for any $\varepsilon>0$ and $\hat{\kappa}\in (\kappa,1/3),$   we obtain 
\begin{align*}
\limsup_{N\to\infty}\Pb(\mathfrak{H}>g(\xi^{N},c))&\leq \limsup_{N\to\infty} \Pb(\mathfrak{H}>g(\xi^{N},c-\varepsilon)+N^{\hat{\kappa}})\\&\leq \lim_{N\to\infty} \Pb(\{\mathfrak{H}>g(\xi^{N},c-\varepsilon)+N^{\hat{\kappa}}\}\cap B_{N}(c-\varepsilon))\\&+  \lim_{N\to\infty} 1-\Pb(B_{N}(c-\varepsilon))
\\&=1-F(c-\varepsilon).
\end{align*}
Since $\varepsilon$ is arbitrary and $F$ is continuous, the statement follows. 
\end{proof}
We report as Corollary the obtained upper bound on the TV-distance: 
\begin{cor}\label{corfinal}
Under the assumptions of Theorem \ref{main}, we have 
\begin{equation}
\limsup_{N\to\infty}d_{\xi^{N}}(g(\xi^{N},c))\leq 1-F(c).
\end{equation}
\end{cor}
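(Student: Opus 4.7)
The plan is a one-line chaining of the two main outputs already established in this section. Proposition~\ref{dprop} reduces the total variation distance to the tail of the hitting time $\mathfrak{H}(\zeta^{N})$, giving
$$\limsup_{N\to\infty}d_{\xi^{N}}(g(\xi^{N},c))\leq \limsup_{N\to\infty}\Pb(\mathfrak{H}(\zeta^{N})>g(\xi^{N},c)),$$
and Theorem~\ref{HF} identifies
$$\limsup_{N\to\infty}\Pb(\mathfrak{H}>g(\xi^{N},c))=1-F(c).$$
Substituting the second identity into the right-hand side of the first bound produces the claimed inequality.

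There is consequently no further obstacle at the level of this corollary: its sole purpose is to package the upper bound into the form needed for Theorem~\ref{main}. All the nontrivial work was carried out earlier, namely in Proposition~\ref{dprop} (where one couples $\xi^N_t$ to the stationary chain, reduces the disagreement event to the two hitting times $\mathfrak{g}(\xi^N)$ and $\mathfrak{g}(\pi_0^{b_N,k_N})$, uses the exponential tails \eqref{CC} of $\pi_{b_N,k_N}$ for $\LL$ and $\RR$, and invokes \eqref{20} together with Theorem~1.9 of \cite{BBHM} to control the hitting time from any sufficiently packed configuration), and in Theorem~\ref{HF} (where $\mathfrak{H}$ is related to the extremal positions $\LL(\zeta^{N}_{\g})$ and $\RR(\zeta^{N}_{\g})$ through the overtaking estimates of Section~\ref{secovertake}, and these are in turn matched with the KPZ-type assumptions \eqref{particles}--\eqref{holes}). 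Given these inputs, the proof of Corollary~\ref{corfinal} consists solely in writing the two-step chain of inequalities above.
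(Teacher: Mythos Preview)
Your proposal is correct and matches the paper's own proof exactly: the paper simply states that the corollary ``directly follows from Theorem~\ref{HF} and Proposition~\ref{dprop},'' which is precisely the two-step chaining you wrote out. Your additional commentary on where the real work lies is accurate but already more detailed than what the paper provides.
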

\begin{proof}
This directly follows from Theorem \ref{HF} and Proposition \ref{dprop}.
\end{proof}
\subsection{Proof of limit Theorem for the hitting time}\label{5}
We start by showing that  the convergence \eqref{particles}, valid for $X_{k_N},$ still holds upon deleting all particles $\eta^{\infty}$  may have to the left of $1$. 
\begin{tthm}\label{theorem5}
 Let $\eta^{\infty}$ be the extension from Theorem \ref{main}. Let $\hat{\xi}^{\infty}=\eta^{\infty}*\mathbf{1}_{\Z_{\geq 1}}$ be the configuration obtained from $\eta^{\infty}$ by replacing all particles on non-positive integers by holes. 
 Then, for any $\kappa\in (0,1/3)$, 
 \begin{equation}\label{F(c)}
 \lim_{N\to \infty}\Pb\left(  \mathcal{L}\left(\hat{\xi}^{\infty}_{g(\xi^{N},c)}\right)\leq b_{N}-k_{N} -N^{\kappa}\right)=1-F(c).
 \end{equation}
\end{tthm}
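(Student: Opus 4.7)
The plan is to compare $\hat{\xi}^{\infty}$ with $\eta^{\infty}$ via the two-species coupling $\sigma_{t} := 2\eta^{\infty}_{t} - \hat{\xi}^{\infty}_{t}$ under the basic coupling, in which the $1$'s are the particles of $\hat{\xi}^{\infty}_{t}$ (first class) and the $2$'s are the remaining particles of $\eta^{\infty}_{t}$, all of which started on $\Z_{\le 0}$ (second class). I will use the following structural facts recalled in Section~\ref{secgraph}: the occupied sites of $\sigma_{t}$ coincide with those of $\eta^{\infty}_{t}$, the $1$'s of $\sigma_{t}$ occupy precisely the sites of $\hat{\xi}^{\infty}_{t}$, and the $1$'s preserve their relative order among themselves. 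In particular, since $X_{k_{N}}$ is the leftmost particle of $\xi^{N}$, the leftmost $1$ in $\sigma_{t}$ sits at position $\mathcal{L}(\hat{\xi}^{\infty}_{t})$.

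Let $m(t)$ denote the number of $2$'s lying to the right of the leftmost $1$ in $\sigma_{t}$. A rank-counting argument yields the key identity
\[
\mathcal{L}(\hat{\xi}^{\infty}_{t}) \;=\; X_{k_{N}+m(t)}(t),
\]
since the particles of $\sigma_{t}$ at positions $\ge \mathcal{L}(\hat{\xi}^{\infty}_{t})$ consist of all first-class particles together with exactly $m(t)$ second-class ones, and this rank translates, via the preservation of labels in the single-species evolution of $\eta^{\infty}$, into the $\eta^{\infty}$-label $k_{N}+m(t)$. Since $j \mapsto X_{k_{N}+j}(t)$ is non-increasing by that same label preservation, on the event $\{m(\g) \le N^{\delta}\}$ this gives the sandwich
\[
X_{k_{N}+N^{\delta}}(\g) \;\le\; \mathcal{L}(\hat{\xi}^{\infty}_{\g}) \;\le\; X_{k_{N}}(\g).
\]

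Next I apply Corollary~\ref{cor62} to $\sigma$: the initial data satisfies $\max A_{2} \le 0 < 1 \le \min A_{1}$, and $\g \le DN$ for $D = (D(\xi^{N})+1)/(p-q)$ and $N$ large, so $\Pb(m(\g) > N^{\delta}) \to 0$. Combining this overtaking bound with the hypothesis \eqref{particles} of Theorem~\ref{main} applied at $j=0$ and $j=N^{\delta}$, both of which give limit $1-F(c)$ for the event $\{X_{k_{N}+j}(\g) \le b_{N}-k_{N}-N^{\kappa}\}$, the sandwich squeezes $\Pb(\mathcal{L}(\hat{\xi}^{\infty}_{\g}) \le b_{N}-k_{N}-N^{\kappa})$ between two quantities both converging to $1-F(c)$, yielding \eqref{F(c)}.

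The main obstacle is justifying the rank identity $\mathcal{L}(\hat{\xi}^{\infty}_{t}) = X_{k_{N}+m(t)}(t)$, which requires carefully distinguishing two labellings that live simultaneously on the same occupancy set: the single-species labelling of $\eta^{\infty}$, in which the label follows the preserved spatial rank (single-species particles cannot cross), versus the two-species labelling of $\sigma$, in which $1$'s and $2$'s genuinely swap past each other. Once the identity is in place, the proof reduces to combining the overtaking estimate from Section~\ref{secovertake} with the assumed KPZ-type convergence \eqref{particles}.
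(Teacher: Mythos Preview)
Your proof is correct and follows essentially the same approach as the paper's: both set up the same two-species coupling, invoke Corollary~\ref{cor62} to bound the number of second-class particles that overtake the leftmost first-class particle, and then squeeze $\mathcal{L}(\hat\xi^{\infty}_{\g})$ between $X_{k_N+N^{\delta}}(\g)$ and $X_{k_N}(\g)$ using hypothesis~\eqref{particles} at $j\in\{0,N^{\delta}\}$. The only variation is in how the lower sandwich inequality is obtained: the paper tracks the specific second-class particle $Y_{N^{\delta'}}$ and observes that on the high-probability event $E_{1,N}$ it never swapped with a first-class particle and therefore coincides with $X_{k_N+N^{\delta'}}$, whereas you use the equivalent exact rank identity $\mathcal{L}(\hat\xi^{\infty}_t)=X_{k_N+m(t)}(t)$ and then bound $m(\g)$.
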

\begin{proof}
We couple the ASEPs started from $\eta^{\infty}, \hat{\xi}^{\infty}$ together via basic coupling. This induces  an  ASEP with first class particles initially located on $\{j\geq 1:  \hat{\xi}^{\infty}(j)=1\}$ and second class particles initially located on $\{j\leq 0: \eta^{\infty}(j)=1\}$, i.e. an ASEP with initial configuration 
\begin{equation}\label{2ndclass}\mathbf{1}_{\{j\geq 1:  \hat{\xi}^{\infty}(j)=1\}}+2*\mathbf{1}_{\{j\leq 0: \eta^{\infty}(j)=1\}}.\end{equation}
See Figure 2 for an illustration. 
We label the 2's of  \eqref{2ndclass} from right to left as 
\begin{equation}
\cdots<Y_3<Y_2<Y_1 \leq 0
\end{equation}
and denote by $Y_{k}(s)$ the positon  at time $s$ of the second class particle initially at position $Y_k$. 
Let $\LL(\hat{\xi}^{\infty}_{s})$ be the position of the leftmost particle in  $\hat{\xi}^{\infty}_{s}$ .
It follows from Corollary \ref{cor62} that for $\delta>0,$ with probability going to $1$ as $N\to \infty$    there were never more than $(g(\xi^{N},c))^{\delta}$ many  2's   to the right of  $\LL(\hat{\xi}^{\infty}_{\cdot })$ during the time  interval  $[0,g(\xi^{N},c)]$. Since $\g$ is of order $N$, taking $\delta\in (0,1/3), $ and a $\delta' \in (\delta,1/3)$, we see that, as $N\to \infty$,  the number of 2's to the right of  $\LL(\hat{\xi}^{\infty}_{\cdot })$ never exceeded $N^{\delta'}$ with probability going to $1$.  This is equivalent to the statement
\begin{equation*}\label{E1N}
\lim_{N\to\infty}\Pb\left(E_{1,N}\right)=1,
\end{equation*}
where  we define $E_{1,N}:=\{\max_{0\leq s \leq g(\xi^{N},c)}(Y_{N^{\delta'}}(s)-\LL(\hat{\xi}^{\infty}_{s }))<0\}.$  See Figure 2.

On the event $E_{1,N}$, $Y_{N^{\delta'}}$ never swapped its position with a first class particle during $[0,g(\xi^{N},c)]$. Denote by $X_{k_{N}+N^{\delta'}}(g(\xi^{N},c))$ the position at time $g(\xi^{N},c)$ of the first class particle in the $(\eta^{\infty}_s, s\geq 0)$ process which started 
at position $Y_{N^{\delta'}}(0)$. Since, on $E_{1,N}$, $Y_{N^{\delta'}}(\cdot )$ never swapped its position with a first class particle during $[0,g(\xi^{N},c)]$, it follows that,  on $E_{1,N}$, $Y_{N^{\delta'}}(\cdot )$ and  $X_{k_{N}+N^{\delta'}}(\cdot)$ agree during  $[0,g(\xi^{N},c)]$. 
Hence 
\begin{equation}
\begin{aligned}\label{forpatrik}
E_{1,N}&=E_{1,N}\cap\{X_{k_{N}+N^{\delta'}}(g(\xi^{N},c))=Y_{N^{\delta'}}(g(\xi^{N},c))\}
\\&\subseteq  \{\max_{0\leq s \leq g(\xi^{N},c)}(X_{k_N+N^{\delta'}}(s)-\LL(\hat{\xi}^{\infty}_{s }))<0\}\\&=:E_{2,N}.
\end{aligned}
\end{equation}
Hence $\lim_{N\to\infty}\Pb\left(E_{2,N}\right)=1$ and we thus obtain using assumption \eqref{particles}
\begin{align*}
1-F(c)&=\lim_{N\to\infty}\Pb\left( X_{k_{N}+N^{\delta'}}(g(\xi^{N},c))  \leq b_{N}-k_{N} -N^{\kappa}\right)
\\&=\lim_{N\to\infty}\Pb\left(E_{2,N}\cap\{ X_{k_{N}+N^{\delta'}}(g(\xi^{N},c))  \leq b_{N}-k_{N} -N^{\kappa}\}\right)
\\&\geq \lim_{N\to\infty}\Pb\left(E_{2,N}\cap\{ \mathcal{L}\left(\hat{\xi}^{\infty}_{g(\xi^{N},c)}\right) \leq b_{N}-k_{N} -N^{\kappa}\}\right)
\\&= \lim_{N\to\infty}\Pb\left(\mathcal{L}\left(\hat{\xi}^{\infty}_{g(\xi^{N},c)}\right) \leq b_{N}-k_{N} -N^{\kappa}\right).
\end{align*}
On the other hand, under the basic coupling, we always have $\mathcal{L}\left(\hat{\xi}^{\infty}_{g(\xi^{N},c)}\right) \leq X_{k_{N}}(g(\xi^{N},c)) $ because $X_{k_{N}}$ may have  additional particles to its left which could  block its jumps to the left, whereas $\mathcal{L}\left(\hat{\xi}^{\infty}_{g(\xi^{N},c)}\right) $ can always jump to the left. Hence we also have  using assumption \eqref{particles}
\begin{align*}
1-F(c)&=\lim_{N\to\infty}\Pb\left( X_{k_{N}}(g(\xi^{N},c))  \leq b_{N}-k_{N} -N^{\kappa}\right)
\\&\leq  \lim_{N\to\infty}\Pb\left(\mathcal{L}\left(\hat{\xi}^{\infty}_{g(\xi^{N},c)}\right) \leq b_{N}-k_{N} -N^{\kappa}\right),
\end{align*}
finishing the proof.
\end{proof}
 
\begin{figure}
\begin{center}
\begin{tikzpicture}[scale=1.6]

\def\xL{-3.2}
\def\xR{ 6.0}

\def\sites{-3.0,-2.6,-2.2,-1.8,-1.4,-1.0,-0.6,-0.2,%
            0.2,0.6,1.0,1.4,1.8,2.2,2.6,3.0,3.4,3.8,4.2,4.6,5.0,5.4}

\def\xone{0.2}   
\def\xbN{3.8}    

\newcommand{\hole}[1]{\draw (#1,0.2) circle (0.1);}
\newcommand{\onep}[1]{\filldraw[black] (#1,0.2) circle (0.1);}
\newcommand{\two}[1]{\filldraw[gray] (#1,0.2) circle (0.1);}

\draw[ thick,<->] (\xL,0) -- (\xR,0);

\draw[very thick] (\xone,0.08)--(\xone,-0.08);
\draw[very thick] (\xbN,0.08)--(\xbN,-0.08);
\draw[very thick] (0.6,0.08)--(0.6,-0.08);
\node[below] at (\xone,-0.18) {$1$};
\node[below] at (\xbN,-0.18) {$b_N$};
\node[below] at (0.6+0.1,-0.18) {$\LL(\hat\xi^\infty_s)$};

\foreach \x in \sites {\hole{\x}}

\foreach \x in {-2.6,-2.2,-1.8,-1.0,-0.6}
  {\two{\x}}

\foreach \x in {0.6,1.0,1.4,1.8,2.6,3.0,3.8,5.0, 5.4}
  {\onep{\x}}

\def\xY{-2.2}
\two{\xY}
\draw[thick] (\xY,0.2) circle (0.14);
\node[above] at (\xY,0.38) {$Y_{N^{\delta'}}$};

\begin{scope}[yshift=-1.6cm]
\draw[thick,<->, draw=black] (\xL,0) -- (\xR,0);

\draw[very thick] (\xone,0.08)--(\xone,-0.08);
\draw[very thick] (\xbN,0.08)--(\xbN,-0.08);
\draw[very thick] (-0.6,0.08)--(-0.6,-0.08);

\node[below] at (\xone,-0.18) {$1$};
\node[below] at (\xbN,-0.18) {$b_N$};
\node[below] at (-0.6,-0.18) {$\LL(\hat\xi^\infty_s)$};

\foreach \x in \sites {\hole{\x}}

\foreach \x in {-0.6,-0.2,1.4,1.8,3.0,3.8,4.6,5.0, 5.4}
  {\onep{\x}}

\foreach \x in {-2.6,-1.0,-1.4,0.2, 0.6}
  {\two{\x}}

\def\xY{-1.4}
\two{\xY}
\draw[thick] (\xY,0.2) circle (0.14);
\node[above] at (\xY,0.38) {$Y_{N^{\delta'}}$};

\end{scope}

\end{tikzpicture}
\end{center}

\caption{%
Top row: the initial discrepancy configuration
$\mathbf{1}_{\{j\ge1:\hat{\xi}^\infty(j)=1\}} + 2\,\mathbf{1}_{\{j\le0:\eta^\infty(j)=1\}}$ used in the proof of Theorem \ref{theorem5}. \\
Bottom row: depiction of the event $E_{1,N}$ from \eqref{E1N}: the second-class particle
$Y_{N^{\delta'}}(s)$ remains strictly to the left of the leftmost first-class particle
$\LL(\hat{\xi}^\infty_s)$ for all $s\le g(\xi^N,c)$, and therefore does not swap with
first-class particles.\\
Black circles denote first-class particles, gray circles denote second-class particles, and empty circles denote holes.
}
\label{fig:secondclass_coupling_E1N}
\end{figure}
The next goal is to extend the convergence  established in Theorem \ref{theorem5} from $\mathcal{L}\left(\hat{\xi}^{\infty}_{g(\xi^{N},c)}\right)$ to $\LL(\zeta^{N}_{\g})$. 
Recall the configurations $\bar{\xi}^{\infty},\zeta^{N}$ from \eqref{ext1},\eqref{zeta}. Note that among the extensions of $\xi^{N}$ which have no particles in $(-\infty;0]$ the extension  $\bar{\xi}^{\infty}$ is the one where particles move fastest, whereas 
$\zeta^{N}$ is the one where particles move slowest. 
Since  the next theorem shows that it is 
 very unlikely that $\LL(\bar{\xi}^{\infty}_{\g})$ has reached position  $ b_{N}-k_{N}-N^{\kappa}, $ but  $\LL(\zeta^{N}_{\g})$ has not,  and since 
  $\bar{\xi}^{\infty}$ is the fastest extension, the next theorem will apply with $\LL(\bar{\xi}^{\infty})$ replaced by $\mathcal{L}\left(\hat{\xi}^{\infty}_{g(\xi^{N},c)}\right).$
\begin{tthm}\label{theorem6}
Let $\bar{\xi}^{\infty},\zeta^{N}$ be as in \eqref{ext1},\eqref{zeta}. Then, for $\kappa\in (0,1/3)$  we have 
\begin{equation}
\lim_{N\to\infty}\Pb(\{\LL(\bar{\xi}^{\infty}_{\g})\geq b_{N}-k_{N}-N^{\kappa}\}\setminus\{\LL(\zeta^{N}_{\g})\geq b_{N}-k_{N}-N^{\kappa}\})=0. 
\end{equation}
\end{tthm}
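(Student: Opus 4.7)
The plan is to couple $\bar{\xi}^\infty$ and $\zeta^N$ via the basic coupling. Since $\zeta^N = \bar{\xi}^\infty + \mathbf{1}_{\Z_{>b_N}}$ pointwise, this yields a $3$-type configuration $\hat{\eta}^N\in\{0,1,2\}^{\Z}$ with $\hat{\eta}^N(i)=1$ when $\xi^N(i)=1$, $\hat{\eta}^N(i)=2$ when $i>b_N$, and $\hat{\eta}^N(i)=0$ elsewhere. Under this coupling the first class particles trace the particles of $\bar{\xi}^\infty$ while the first and second class particles together form $\zeta^N$, so that $\LL(\bar{\xi}^\infty_t)=\LL^1(\hat{\eta}^N_t)$ and $\LL(\zeta^N_t)=\min(\LL^1,\LL^2)(\hat{\eta}^N_t)$. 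Writing $y:=b_N-k_N-N^\kappa$, the event whose probability we must show tends to $0$ becomes
\[
E_N := \{\LL^1(\hat{\eta}^N_{\g})\ge y\}\cap\{\LL^2(\hat{\eta}^N_{\g})<y\},
\]
i.e.\ a second class particle has leaked from $\Z_{>b_N}$ past $y$ while every first class particle is still at or to the right of $y$.

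The next step is to apply Theorem~\ref{61} to $\hat{\eta}^N$. The hypothesis is satisfied since $A_0=\Z_{\le0}\cup\{i\in[1;b_N]:\xi^N(i)=0\}$ and $A_2=\Z_{>b_N}$ are both infinite with $\max A_0\le b_N<b_N+1=\min A_2$. Hence for any $\delta\in(0,\kappa)$,
\[
\Pb\bigl(M_{\g}>\g^\delta\bigr)\xrightarrow[N\to\infty]{}0,
\]
where $M_s$ counts the $0$s strictly to the right of $\LL^2(\hat{\eta}^N_s)$. Since $0$s preserve their ASEP order, on this good event $\LL^2(\hat{\eta}^N_{\g})$ is bounded below by the time-$\g$ position of the $(\lfloor N^\delta\rfloor+1)$-th rightmost $0$ of $\hat{\eta}^N$; and the identity of this $0$ is fixed by its initial position, namely the $(\lfloor N^\delta\rfloor+1)$-th rightmost hole of $\xi^N$ inside $[1;b_N]$, which coincides with $H_{b_N-k_N-\lfloor N^\delta\rfloor}(0)$ in the $\eta^\infty$-labelling \eqref{holeslab}.

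The third step transfers this bound to a statement about $\eta^\infty$. Via an auxiliary multi-color coupling between $\hat{\eta}^N$ and $\eta^\infty$, analogous to the coupling used in the proof of Theorem~\ref{theorem5} but applied on the right side (using Corollary~\ref{cor62} in its second-class-particle-bounding role), the position at time $\g$ of the selected $0$ in $\hat{\eta}^N$ can be compared to $H_{b_N-k_N-\lfloor N^\delta\rfloor}(\g)$ up to $o(N^\kappa)$ corrections. Assumption~\eqref{holes} with $j=-\lfloor N^\delta\rfloor$ then forces $H_{b_N-k_N-\lfloor N^\delta\rfloor}(\g)\ge b_N-k_N+N^\kappa$ with probability tending to $1-F(c)$, hence $\LL^2(\hat{\eta}^N_{\g})>y$ on a high-probability set. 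Combined with assumption~\eqref{particles} for $\LL^1_{\g}$ and the fact that $\Pb(\LL^1_{\g}\in[y,y+N^\delta))\to0$ (which follows from the KPZ $N^{1/3}$ fluctuation scale and continuity of $F$), this squeezes $E_N$ into an asymptotically negligible set, yielding $\Pb(E_N)\to0$.

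The principal obstacle is the third step: although the relevant $0$ in $\hat{\eta}^N$ and the hole in $\eta^\infty$ start at the same position, they subsequently evolve under different dynamics ($\hat{\eta}^N$ has a $2$-wall on $\Z_{>b_N}$ and no particles on $\Z_{\le 0}$, whereas $\eta^\infty$ is arbitrary outside $[1;b_N]$). Making the comparison rigorous requires introducing a further auxiliary multi-color coupling and controlling the discrepancy with the overtaking bounds of Section~\ref{secovertake}, in a way fully symmetric to the left-side argument underlying Theorem~\ref{theorem5}.
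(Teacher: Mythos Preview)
Your setup is correct: under the basic coupling the pair $(\bar{\xi}^\infty,\zeta^N)$ becomes the two-species process $\zeta^{2,N}$, and the event in the theorem is exactly $E_N=\{\LL^1(\zeta^{2,N}_{\g})\ge y\}\cap\{\LL^2(\zeta^{2,N}_{\g})<y\}$ with $y=b_N-k_N-N^\kappa$. Invoking Theorem~\ref{61} to bound $M_s$ is also the right first move and matches the paper.

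The gap is everything after that. Your third step---transporting a specific hole of $\zeta^{2,N}$ to the hole $H_{b_N-k_N-\lfloor N^\delta\rfloor}$ of $\eta^\infty$---is not carried out, and there is no monotone relation to exploit: the two processes differ arbitrarily outside $[1;b_N]$. Even granting the comparison, the conclusion would not follow. Assumption~\eqref{holes} only tells you that $\Pb\bigl(H_{\cdots}(\g)\ge b_N-k_N+N^\kappa\bigr)\to 1-F(c)$, which concerns the threshold $b_N-k_N+N^\kappa$, not $y=b_N-k_N-N^\kappa$, and $1-F(c)$ is not ``high probability'' in general. Your final ``squeeze'' is not a valid deduction: knowing two marginal probabilities says nothing about the probability of the intersection $E_N$.

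What you miss is an elementary deterministic count that makes any detour through $\eta^\infty$ unnecessary. Strictly to the right of $\LL^2(\zeta^{2,N}_s)$ there are at most $k_N$ first-class particles (there are only $k_N$ in total) and exactly $M_s$ holes. On the other hand, since the number of $2$'s that have entered $(-\infty,b_N]$ always equals the number of non-$2$'s that have entered $(b_N,\infty)$, the total number of non-$2$'s at positions $>\LL^2_s$ is exactly $b_N-\LL^2_s+1$. Hence $b_N-\LL^2_s+1\le k_N+M_s$, i.e.\ $\LL^2_s>b_N-k_N-M_s$. On the event $\{\sup_{s\le\g}M_s\le N^{\kappa}/2\}$, which has probability $\to 1$ by Theorem~\ref{61}, this already gives $\LL^2_{\g}>b_N-k_N-N^{\kappa}/2>y$, so $E_N$ cannot occur on that event. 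This is the paper's inclusion $E_{3,N}\subseteq E_{4,N}$; the paper then finishes via a short comparison with reversed step initial data (Proposition~\ref{block}), using neither $\eta^\infty$ nor assumptions~\eqref{particles},~\eqref{holes}.
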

\begin{proof}
We couple two ASEPs started from $\bar{\xi}^{\infty},\zeta^{N}$ together via basic coupling. This induces an ASEP with second class particles and initial configuration given by (see Figure 3)
\begin{equation}
\zeta^{2,N}=\bar{\xi}^{\infty}(i)+2*\mathbf{1}_{\Z_{>b_{N}}}(i), \quad i\in \Z.
\end{equation}
Let $\LL^{2}(\zeta^{2,N}_{t})$ be the position of the leftmost $2$ of $\zeta^{2,N}$. It follows from Theorem \ref{61} that, for $\delta>0$,  during  $[0,\g]$, this leftmost $2$ will never have more than $\g^{\delta}$ many $0's$ to its right with probability going to $1$ as $N\to \infty$. More precisely, with $M_s$ as in Theorem \ref{61}, we have 
\begin{equation*}
\Pb\left(\sup_{0\leq s\leq \g}M_{s}>\g^{\delta}\right)\leq C_{1}e^{-C_{2}\g^{\delta}}.
\end{equation*}
 We may take $\delta\in (0,1/3)$, and replace $\g^{\delta}$ by $N^{\kappa}/2$ for a $\kappa\in (\delta,1/3)$ and thus obtain 
 \begin{equation*}
\lim_{N\to \infty}\Pb\left(\sup_{0\leq s\leq \g}M_{s}>N^{\kappa}/2\right)=0. 
\end{equation*}
Let $E_{3,N}:=\{\sup_{0\leq s\leq \g}M_{s}\leq N^{\kappa}/2\}.$ 
 Since $\LL^{2}(\zeta^{2,N})$ can  have  at most $k_{N}$ many  1's to its right, 
 it follows that \begin{equation*}E_{3,N}\subseteq \left\{\inf_{0\leq s\leq \g}\LL^{2}(\zeta^{2,N}_{s})>b_{N}-k_{N}-N^{\kappa}/2\right\}=: E_{4,N}
 \end{equation*}
 and hence $\lim_{N\to \infty}\Pb\left(E_{4,N}\right)=1$.

 Recall now that $\LL(\zeta^{N}),\LL(\bar{\xi}^{\infty})$ start at the same position and are leftmost particles. Hence, for  $\LL(\zeta^{N}_{\g})<\LL(\bar{\xi}^{\infty}_{\g})$ to happen, at some time point during $[0,\g]$  a jump to the right of the particle $\LL(\zeta^{N})$ was blocked by a particle not present in the process $(\bar{\xi}^{\infty}_s,s\geq 0)$. But this blocking  happens exactly iff  the leftmost 1 $\LL(\zeta^{2,N})$ swaps its position with the leftmost 2 $\LL^{2}(\zeta^{2,N})$ during $[0,\g]$. 
 On $E_{4,N}$ this swap can only happen to the right of $b_{N}-k_{N}-N^{\kappa}/2$. See Figure 3. Hence we obtain 
 \begin{equation}
 \begin{aligned}\label{8}
 E_{4,N} \cap \{\LL(\zeta^{N}_{\g})<\LL(\bar{\xi}^{\infty}_{\g})\} &\subseteq \left\{\sup_{0\leq s\leq \g}\LL(\zeta^{N}_{s})\geq b_{N}-k_{N}-N^{\kappa}/2\right\}\\&=:E_{5,N}.
 \end{aligned}
 \end{equation}
 Setting 
 \begin{align*} E_{6,N}:&=\{\LL(\bar{\xi}^{\infty}_{\g})\geq b_{N}-k_{N}-N^{\kappa}\}\setminus\{\LL(\zeta^{N}_{\g})\geq b_{N}-k_{N}-N^{\kappa}\}\\&\subseteq \{\LL(\zeta^{N}_{\g})<\LL(\bar{\xi}^{\infty}_{\g})\}\end{align*}
 we thus see  from \eqref{8}
  \begin{equation}\label{9}
 E_{4,N} \cap E_{6,N}\subseteq E_{5,N}\cap \left\{\LL(\zeta^{N}_{\g})\leq b_{N}-k_{N}-N^{\kappa}\right\}.
 \end{equation}
 The point now is that for the r.h.s of \eqref{9} to happen, $\LL(\zeta^{N})$ would need to first reach position $b_{N}-k_{N}-N^{\kappa}/2$, and then travel back to $b_{N}-k_{N}-N^{\kappa},$ which is very unlikely. To make this precise,  we start an ASEP from $\eta^{-step(b_{N}-k_{N}-N^{\kappa}/2)}$ at time $0$. Under the basic coupling, we have
 \begin{equation*}
 E_{5,N}\subseteq\{ \LL(\zeta^{N}_{\g})\geq \LL(\eta^{-step(b_{N}-k_{N}-N^{\kappa}/2)}_{\g})\}
 \end{equation*}
 and thus 
 \begin{equation}\label{10}
 E_{5,N}\cap \left\{\LL(\zeta^{N}_{\g})\leq b_{N}-k_{N}-N^{\kappa}\right\}\subseteq 
 \{\LL(\eta^{-step(b_{N}-k_{N}-N^{\kappa}/2)}_{\g}\leq  b_{N}-k_{N}-N^{\kappa}\}.
 \end{equation}
 We may then apply the inequality \eqref{bms2} to the r.h.s of \eqref{10} to obtain 
 \begin{equation*}
 \lim_{N\to\infty}\Pb\left( E_{5,N}\cap \left\{\LL(\zeta^{N}_{\g})\leq b_{N}-k_{N}-N^{\kappa}\right\}\right)=0.
 \end{equation*}
  \begin{figure}[H]\begin{center}
\begin{tikzpicture}[scale=1.6]

\def\xL{-1.75}
\def\xR{ 6.20}

\def\leftsites{-1.40,-1.05,-0.70,-0.35}
\def\middlesites{0.00,0.35,0.70,1.05,1.40,1.75,2.10,2.45,2.80,3.15,3.50,3.85,4.20,4.55}
\def\rightsites{4.90,5.25,5.60,5.95}

\def\xone{-0.35}
\def\xbN{4.55}

\def\xA{2.10}
\def\xB{3.85}

\newcommand{\hole}[1]{\draw (#1,0.2) circle (0.1);}
\newcommand{\partone}[1]{\fill (#1,0.2) circle (0.1);}
\newcommand{\parttwo}[1]{\fill[gray] (#1,0.2) circle (0.1);}

\newcommand{\highlight}[1]{\draw[very thick] (#1,0.2) circle (0.14);}
\newcommand{\labelabove}[3]{\node[above=#1] at (#2,0.2) {#3};}

\def\xipartsRowOne{0.35,1.75,2.45,2.80,4.20,4.55}
\def\righttwosRowOne{4.90,5.25,5.60,5.95}

\def\xipartsRowTwo{2.10,2.80,3.50,3.85,4.20,4.90}
\def\righttwosRowTwo{2.45,4.55,5.60,5.95}

\def\LoneRowOne{0.35}
\def\LtwoRowOne{4.90}

\def\LoneRowTwo{2.10}
\def\LtwoRowTwo{2.45}

\newcommand{\DrawBaseRow}{%
  \draw[thick,<->] (\xL,0) -- (\xR,0);
  \draw[very thick] (\xone,0.08)--(\xone,-0.08);
  \draw[very thick] (\xbN,0.08)--(\xbN,-0.08);
  \node[below] at (\xone,-0.18) {$1$};
  \node[below] at (\xbN,-0.18) {$b_N$};

  \foreach \x in \leftsites   {\hole{\x}}
  \foreach \x in \middlesites {\hole{\x}}
  \foreach \x in \rightsites  {\hole{\x}}
}

\DrawBaseRow
\foreach \x in \xipartsRowOne   {\partone{\x}}
\foreach \x in \righttwosRowOne {\parttwo{\x}}

\highlight{\LoneRowOne}
\highlight{\LtwoRowOne}

\labelabove{5pt}{\LoneRowOne}{$\LL(\zeta^{2,N}_{0})$}
\labelabove{5pt}{\LtwoRowOne}{$\LL^{2}(\zeta^{2,N}_{0})$}

\begin{scope}[yshift=-1.75cm]
\DrawBaseRow

\def\xMone{2.10}
\def\xMtwo{1.05}

\draw[very thick] (\xMone,0.08)--(\xMone,-0.08);
\draw[very thick] (\xMtwo,0.08)--(\xMtwo,-0.08);
\node[below] at (\xMtwo-0.1,-0.18) {$b_N-k_N-N^\kappa$};
\node[below] at (\xMone+0.7,-0.18) {$b_N-k_N-\frac{N^\kappa}{2}$};

\foreach \x in \xipartsRowTwo   {\partone{\x}}
\foreach \x in \righttwosRowTwo {\parttwo{\x}}

\highlight{\LoneRowTwo}
\highlight{\LtwoRowTwo}

\labelabove{5pt}{\LoneRowTwo-0.25}{$\LL(\zeta^{2,N}_{s})$}
\labelabove{5pt}{\LtwoRowTwo+0.45}{$\LL^{2}(\zeta^{2,N}_{s})$}

\end{scope}

\end{tikzpicture}\end{center}

\caption{
Top: the initial configuration $\zeta^{2,N}$: all sites $i<1$ are holes, while sites to the right of $b_N$ are occupied by second-class particles.\\
Bottom: $\zeta^{2,N}_s$ for $s\in [0,g(\xi^{N},c)]$.  The leftmost  $2$ is unlikely to reach position $b_N - k_N - \tfrac12 N^{\kappa}$. For the inequality
$\LL\bigl(\zeta^{N}_{g(\xi,c)}\bigr) < \LL\bigl(\bar{\xi}^{\infty}_{g(\xi,c)}\bigr)$
to hold, $\LL\bigl(\zeta^{N})$ must make a swap with the leftmost 2, which can then only take place to the right of $b_N - k_N - \tfrac12 N^{\kappa}$.
But it is highly unlikely that $\LL\bigl(\zeta^{N})$ travels back to $b_N - k_N -  N^{\kappa}.$ Thus in total it is very unlikely that at the same time $\LL\bigl(\zeta^{N}_{g(\xi,c)}\bigr) < \LL\bigl(\bar{\xi}^{\infty}_{g(\xi,c)}\bigr)$
and  $\LL\bigl(\zeta^{N}_{g(\xi,c)}\bigr) < b_N - k_N -  N^{\kappa}.$
Black circles denote first-class particles, gray circles denote second-class particles, and empty circles denote holes. The leftmost first- and second-class particles are highlighted and labeled in each row.
}
\label{fig:overtake_schematic_22sites_mixed}
\end{figure}

 By the inclusion \eqref{9}, we thus see
   \begin{equation}\label{91}
\lim_{N\to\infty}\Pb( E_{4,N} \cap E_{6,N})=0,
 \end{equation}
 and finally, since $\lim_{N\to\infty}\Pb( E_{4,N})=1$, we obtain from \eqref{91}
   \begin{equation*}
\lim_{N\to\infty}\Pb(  E_{6,N})=0,
 \end{equation*}
 which is the desired statement. 
\end{proof}
\begin{cor}\label{cor}
We have 
\begin{equation*}
\lim_{N\to\infty}\Pb(\LL(\zeta^{N}_{\g})\geq b_{N}-k_{N}-N^{\kappa})=F(c). 
\end{equation*}
\end{cor}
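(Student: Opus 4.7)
The plan is to sandwich $\LL(\zeta^{N}_{\g})$ between $\LL(\hat{\xi}^{\infty}_{\g})$ and $\LL(\bar{\xi}^{\infty}_{\g})$ under the basic coupling, and then read off its limiting distribution from Theorems~\ref{theorem5} and~\ref{theorem6}. To set up the sandwich I would first observe that the three configurations $\bar{\xi}^{\infty},\hat{\xi}^{\infty},\zeta^{N}$ all agree on $[1;b_N]$ and on $(-\infty,0]$, while on $(b_N,\infty)$ they carry, respectively, no particles, the particles of $\eta^{\infty}$, and a completely filled right tail. Hence $\bar{\xi}^{\infty}(i)\le\hat{\xi}^{\infty}(i)\le\zeta^{N}(i)$ for every $i\in\Z$, which translates to $\bar{\xi}^{\infty}\succeq\hat{\xi}^{\infty}\succeq\zeta^{N}$ in the partial order of Section~2.3. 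Attractiveness then propagates this ordering to time $\g$ and yields
\[
\LL(\zeta^{N}_{\g})\ \le\ \LL(\hat{\xi}^{\infty}_{\g})\ \le\ \LL(\bar{\xi}^{\infty}_{\g}).
\]

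For the upper bound I would use the first inequality to write
\[
\Pb(\LL(\zeta^{N}_{\g})\ge b_N-k_N-N^{\kappa})\ \le\ \Pb(\LL(\hat{\xi}^{\infty}_{\g})\ge b_N-k_N-N^{\kappa}).
\]
Theorem~\ref{theorem5} controls the complementary event $\{\LL\le\cdot\}$, so to pass to $\{\LL\ge\cdot\}$ I would reapply the theorem with an auxiliary $\kappa'\in(\kappa,1/3)$: since $b_N-k_N-N^{\kappa'}<b_N-k_N-N^{\kappa}-1$ for large $N$, a squeeze between the $\kappa$ and $\kappa'$ statements gives $\Pb(\LL(\hat{\xi}^{\infty}_{\g})<b_N-k_N-N^{\kappa})\to 1-F(c)$, and hence the right hand side above tends to $F(c)$. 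The continuity of $F$ is what guarantees that the boundary event $\{\LL=b_N-k_N-N^{\kappa}\}$ has vanishing probability in the limit.

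For the lower bound I would use the second inequality in the sandwich, which gives $\{\LL(\zeta^{N}_{\g})\ge b_N-k_N-N^{\kappa}\}\subseteq\{\LL(\bar{\xi}^{\infty}_{\g})\ge b_N-k_N-N^{\kappa}\}$. Thus
\[
\Pb(\LL(\bar{\xi}^{\infty}_{\g})\ge b_N-k_N-N^{\kappa})-\Pb(\LL(\zeta^{N}_{\g})\ge b_N-k_N-N^{\kappa})
\]
is exactly the set-difference probability shown to vanish in Theorem~\ref{theorem6}. Combining this with the trivial bound $\Pb(\LL(\bar{\xi}^{\infty}_{\g})\ge b_N-k_N-N^{\kappa})\ge\Pb(\LL(\hat{\xi}^{\infty}_{\g})\ge b_N-k_N-N^{\kappa})\to F(c)$, which again comes from the attractiveness chain and Theorem~\ref{theorem5}, I obtain $\liminf_N\Pb(\LL(\zeta^{N}_{\g})\ge b_N-k_N-N^{\kappa})\ge F(c)$. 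Together with the upper bound this completes the proof. No step here is a substantive obstacle: the hard work has already been carried out in Theorems~\ref{theorem5} and~\ref{theorem6}, and what remains is bookkeeping with the monotone coupling and the continuity of $F$.
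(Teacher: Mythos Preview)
Your argument is correct and follows essentially the paper's route: the lower bound comes from Theorem~\ref{theorem6} together with $\LL(\hat\xi^{\infty}_{\g})\le\LL(\bar\xi^{\infty}_{\g})$ and Theorem~\ref{theorem5}, and for the upper bound you compare $\LL(\zeta^{N}_{\g})$ with $\LL(\hat\xi^{\infty}_{\g})$ via Theorem~\ref{theorem5}, whereas the paper compares with $X_{k_N}(\g)$ and invokes assumption~\eqref{particles} directly --- an immaterial difference. One small caution on the justification of the sandwich: the cleanest route is to use that the \emph{pointwise} inequality $\bar\xi^{\infty}\le\hat\xi^{\infty}\le\zeta^{N}$ is preserved under the basic coupling (second-class particles do not vanish), which immediately yields $\LL(\zeta^{N}_t)\le\LL(\hat\xi^{\infty}_t)\le\LL(\bar\xi^{\infty}_t)$; invoking attractiveness for the order $\succeq$ is delicate here since the three configurations have different total particle numbers, and the paper only states that property in the context where it is actually used (censoring of reversed step data).
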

\begin{proof}
Using Theorem \ref{theorem6}, we  obtain 
\begin{align*}\lim_{N\to\infty}\Pb(\LL(\zeta^{N}_{\g})\geq b_{N}-k_{N}-N^{\kappa})&=\lim_{N\to\infty}\Pb(\LL(\zeta^{N}_{\g})\geq b_{N}-k_{N}-N^{\kappa})\\&+\lim_{N\to\infty}\Pb(E_{6,N})
\\&= \lim_{N\to \infty}\Pb\left(  \mathcal{L}\left(\bar{\xi}^{\infty}_{g(\xi^{N},c)}\right)\geq b_{N}-k_{N} -N^{\kappa}\right)
\\&\geq  \lim_{N\to \infty}\Pb\left(  \mathcal{L}\left(\hat{\xi}^{\infty}_{g(\xi^{N},c)}\right)\geq b_{N}-k_{N} -N^{\kappa}\right)
\end{align*}
where $\hat{\xi}^{\infty}$ was defined in Theorem \ref{theorem5} and  for the last inequality we used that under the basic coupling we have $\LL(\hat{\xi}^{\infty}_{g(\xi^{N},c)})\leq \LL(\bar{\xi}^{\infty}_{g(\xi^{N},c)})$.
Applying Theorem \ref{theorem5} yields 
\begin{align}\label{F6}\lim_{N\to\infty}\Pb(\LL(\zeta^{N}_{\g})\geq b_{N}-k_{N}-N^{\kappa})\geq F(c). 
\end{align}
Noting that $ \LL(\zeta^{N}_{\g})\leq X_{k_N}(\g)$ under the basic coupling, by \eqref{holes} it  follows
\begin{align}\label{F5}\lim_{N\to\infty}\Pb(\LL(\zeta^{N}_{\g})\geq b_{N}-k_{N}-N^{\kappa})\leq F(c),
\end{align}
and combining \eqref{F6},\eqref{F5} finishes the proof. 
\end{proof}
We finish by proving the analogue of Corollary \ref{cor} for the rightmost hole. 
\begin{cor}\label{cor2}We have 
\begin{equation}\label{needed}
\lim_{N\to\infty}\Pb(\RR(\zeta^{N}_{\g})\leq b_{N}-k_{N}+N^{\kappa})=F(c). 
\end{equation}
\begin{proof}
The proof follows the same strategy as the proof of Corollary \ref{cor} and particle-hole duality. 

 We first prove the analogue of Theorem \ref{theorem5}: In $\eta^{\infty}$, we replace all $0's$ to the right of $b_N$  by $1's,$ and call the resulting configuration $\xi^{\infty'}$, i.e.
$$ \xi^{\infty'}=\eta^{\infty}*\mathbf{1}_{\Z_{\leq b_{N}}}+\mathbf{1}_{\Z_{>b_{N}}}.$$
Our goal is to show that for $\kappa\in (0,1/3)$ 
\begin{equation}\label{F1}
\lim_{N\to\infty}\Pb(\RR(\xi^{\infty'}_{\g})\leq b_{N}-k_{N}+N^{\kappa})=F(c). 
\end{equation}

Let now $\mu \in \{0,1\}^{\Z}$, we define a configuration $\ww{\mu}$ as follows: First we turn all $1's$ into $0's$ and vice versa,
i.e. we consider 
$$\mu^{a}:=\mathbf{1}_{\Z}-\mu.$$
This produces an ASEP $(\mu^{a}_t, t \geq 0)$ with particles having a drift to the left. We reverse the drift by defining the reflected configuration 
$$\mu^{b}_t (j):=\mu^{a}_{t}(-j), j\in \Z.$$ The configuration $\ww{\mu}$ is then defined by shifting $\mu^{b}$ to the right by $b_N +1$, i.e.
\begin{equation}\label{hat}\ww{\mu}(j):= \mu^{b}(j-b_N-1).\end{equation}
Then, the configurations $\ww{\xi}^{\infty'}, \ww{\eta}^{\infty}$ play exactly the same role as the configurations $\hat{\xi}^{\infty}, \eta^{\infty}$ in the proof of Theorem \ref{theorem5}. 
Note that we have deterministically
$$\RR(\xi^{\infty'}_t)=-\mathcal{L}(\ww{\xi}^{\infty'}_t)+b_N+1,$$
hence \eqref{F1} becomes 
\begin{equation}\label{F2}
\lim_{N\to\infty}\Pb(\mathcal{L}(\ww{\xi}^{\infty'}_{\g})\geq k_{N}-N^{\kappa}+1)=F(c). 
\end{equation}
Let now $\ww{X}_{b_N-k_N}$ be the particle in $\ww{\eta}^{\infty}$ which at time $0$ is located at $ \mathcal{L}(\ww{\xi}^{\infty'}_0)$. 
We label the particles of  $\ww{\eta}^{\infty}$  such that 
$$\cdots\ww{X}_{b_N-k_N+1}<\ww{X}_{b_N-k_N}<\ww{X}_{b_N-k_N-1}\cdots$$
Note that we have , whenever the label $b_N-k_N+j$ exists for $j\in \Z$, 
\begin{equation}\label{label}-(\ww{X}_{b_N-k_N+j}-b_N-1)=H_{b_{N}-k_{N}+j}\end{equation}
with $H_{b_{N}-k_{N}+j}$  being the holes of $\eta^{\infty}$ as in \eqref{particleslab}. 
It thus follows that for $\delta,\kappa \in (0,1/3), j\in \{0, N^{\delta}\}$ we have by 
\eqref{holes}  and \eqref{label}
\begin{equation}\label{F3}
\lim_{N\to\infty}\Pb(\ww{X}_{b_N-k_N+j}(\g)\geq k_{N}-N^{\kappa}+1)=F(c). 
\end{equation}
We may now argue, omitting the details, exactly as in \eqref{forpatrik}  that  $$\lim_{N\to\infty}\Pb\left(\max_{0\leq s\leq \g}\ww{X}_{b_N-k_N+N^{\delta}}(s)-\mathcal{L}(\ww{\xi}^{\infty'}_{s})<0\right)=1.$$ Likewise, we have  the inequality 
$$\ww{X}_{b_N-k_N}(\g)\geq\mathcal{L}(\ww{\xi}^{\infty'}_{\g})$$ under the basic coupling.  Hence \eqref{F2} follows from \eqref{F3}, establishing \eqref{F1}. 

To finish the proof, we need to go from \eqref{F1} to \eqref{needed}, for which it suffices to prove 
\begin{equation}\label{F8}
\lim_{N\to\infty}\Pb(\{\RR(\xi^{\infty'}_{\g})\leq b_{N}-k_{N}+N^{\kappa}\}\setminus\{\RR(\zeta^{N}_{\g})\leq b_{N}-k_{N}+N^{\kappa}\}\})=0.
\end{equation}
Just like in Theorem \ref{theorem6}, we establish the stronger statement
\begin{equation}\label{F9}
\lim_{N\to\infty}\Pb(\{\RR(\xi^{\infty''}_{\g})\leq b_{N}-k_{N}+N^{\kappa}\}\setminus\{\RR(\zeta^{N}_{\g})\leq b_{N}-k_{N}+N^{\kappa}\}\})=0,
\end{equation}
where $\xi^{\infty''}:=\mathbf{1}_{\Z_{<1}}+\mathbf{1}_{\Z_{\geq 1}}*\xi^{\infty'}$.
To establish \eqref{F9},  we may again pass to the configurations $\ww{\xi}^{\infty''},\ww{\zeta}^{N}$, and then establish \eqref{F9} in the same way as Theorem \ref{theorem6}: $\ww{\xi}^{\infty''}$ then plays the role of $\bar{\xi}^{\infty}, $ and $\ww{\zeta}^{N}$ the role of $\zeta^{N}$. 
\end{proof}
 
\end{cor}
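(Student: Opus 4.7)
The plan is to reduce the problem for the rightmost hole $\RR(\zeta^{N}_{\g})$ to the one already handled in Corollary \ref{cor} for the leftmost particle, by applying particle--hole duality combined with a spatial reflection, and then to run the same two-stage argument (Theorems \ref{theorem5} and \ref{theorem6}) in the reflected picture. Given $\mu\in\{0,1\}^{\Z}$, I would define $\ww{\mu}(j):=1-\mu(b_N+1-j)$; one checks that under the basic coupling $(\ww{\mu}_t,t\geq 0)$ is again an ASEP with rightward rate $p$ (the duality swap turns particles jumping right at rate $p$ into holes jumping left at rate $p$, and the spatial reflection reverses the direction back), and that $\RR(\mu_t)=b_N+1-\LL(\ww{\mu}_t)$ holds deterministically. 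Thus $\{\RR(\zeta^{N}_{\g})\leq b_N-k_N+N^{\kappa}\}$ coincides with $\{\LL(\ww{\zeta}^{N}_{\g})\geq k_N-N^{\kappa}+1\}$.

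Next I would translate the assumption \eqref{holes} of Theorem \ref{main} into the reflected setting. Under $\mu\mapsto\ww{\mu}$ the holes $H_{b_N-k_N+j}$ of $\eta^{\infty}$ are mapped to labeled particles $\ww{X}_{b_N-k_N+j}$ of $\ww{\eta}^{\infty}$ via $-(\ww{X}_{b_N-k_N+j}-b_N-1)=H_{b_N-k_N+j}$, and $H_{b_N-k_N+j}(\g)\geq b_N-k_N+N^{\kappa}$ becomes the statement that $\ww{X}_{b_N-k_N+j}(\g)\leq k_N-N^{\kappa}+1$ at time $\g$. This is exactly the input needed to run the argument of Theorem \ref{theorem5} on $\ww{\eta}^{\infty}$.

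With this setup I would mirror each step of Section \ref{5}. Let $\xi^{\infty'}$ be obtained from $\eta^{\infty}$ by filling every hole to the right of $b_N$ with a particle; on the reflected side $\ww{\xi}^{\infty'}$ has no particles on $\Z_{\leq 0}$, so it plays the role that $\hat{\xi}^{\infty}$ played in Theorem \ref{theorem5}. Coupling $\ww{\xi}^{\infty'}$ with $\ww{\eta}^{\infty}$ via basic coupling produces second-class particles, and Corollary \ref{cor62} yields the analogue of Theorem \ref{theorem5}: $\lim_{N\to\infty}\Pb(\LL(\ww{\xi}^{\infty'}_{\g})\geq k_N-N^{\kappa}+1)=F(c)$. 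Setting $\xi^{\infty''}:=\mathbf{1}_{\Z_{<1}}+\mathbf{1}_{\Z_{\geq 1}}*\xi^{\infty'}$ (the reflection-analogue of $\bar{\xi}^{\infty}$), Theorem \ref{61} combined with the reversed-step estimate \eqref{bms2} yields the analogue of Theorem \ref{theorem6}: $\lim_{N\to\infty}\Pb(\{\LL(\ww{\xi}^{\infty''}_{\g})\geq k_N-N^{\kappa}+1\}\setminus\{\LL(\ww{\zeta}^{N}_{\g})\geq k_N-N^{\kappa}+1\})=0$. Combining these with the basic-coupling inequality $\LL(\ww{\zeta}^{N}_{\g})\leq \ww{X}_{b_N-k_N}(\g)$ and using \eqref{holes} pinches $\lim_{N\to\infty}\Pb(\LL(\ww{\zeta}^{N}_{\g})\geq k_N-N^{\kappa}+1)=F(c)$, which is exactly \eqref{needed}.

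The main obstacle I expect is careful bookkeeping: verifying that the combined map $\mu\mapsto\ww{\mu}$ is genuinely an automorphism of the basic coupling (so that couplings and second-class-particle constructions of Section \ref{5} carry over without change), and matching the monotonicity relations between the original triple $\zeta^{N}, \hat{\xi}^{\infty}, \bar{\xi}^{\infty}$ with their reflected analogues $\ww{\zeta}^{N}, \ww{\xi}^{\infty'}, \ww{\xi}^{\infty''}$. Once this is in place, each intermediate estimate from Section \ref{5} transfers to the reflected configurations with no conceptual change.
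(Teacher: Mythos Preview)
Your proposal is correct and follows essentially the same approach as the paper's proof: the same particle--hole duality plus reflection map $\ww{\mu}(j)=1-\mu(b_N+1-j)$, the same intermediate configurations $\xi^{\infty'}$ and $\xi^{\infty''}$, and the same two-stage reduction mirroring Theorems~\ref{theorem5} and~\ref{theorem6} with assumption~\eqref{holes} playing the role of~\eqref{particles}. The bookkeeping you flag as the main obstacle is exactly what the paper spells out, and your identifications of the reflected configurations with their Section~\ref{5} counterparts are correct.
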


\bibliography{Biblio}{}
\bibliographystyle{plain}
\end{document}